\documentclass[10pt, final]{article}

\usepackage[T1]{fontenc}		     
\usepackage[utf8]{inputenc}			 
\usepackage[english]{babel}

\usepackage{amsmath}
\usepackage{amssymb}
\usepackage{amsthm}
	\theoremstyle{plain}

		\newtheorem{thm}{Theorem}[section]	


		\newtheorem{cor}[thm]{Corollary}		
		\newtheorem{lem}[thm]{Lemma}		
		\newtheorem{prop}[thm]{Proposition}

	\theoremstyle{definition}
		\newtheorem{defn}[thm]{Definition}	
		\newtheorem{ex}[thm]{Example}		

	\theoremstyle{remark}
		\newtheorem{rem}[thm]{Remark}		
		\newtheorem{note}[thm]{Notation}		

\numberwithin{equation}{section}	

\usepackage{mathtools}		
\usepackage{mathrsfs}		
\usepackage{eucal}			

\usepackage{braket}			
\usepackage[all,pdf]{xy}		
\setcounter{MaxMatrixCols}{12}

\usepackage{mparhack}		
\usepackage{relsize}			

\usepackage{a4wide}		

\usepackage{booktabs}		
\usepackage{multirow}		
\usepackage{caption}		
\captionsetup{font=small, labelfont=bf}
\usepackage{rotating}
\usepackage{subfig}

\usepackage{varioref}		
\usepackage{footmisc}


\usepackage{graphicx}		
\usepackage{epsfig}

\usepackage{enumerate}		
\usepackage[colorlinks=true, linkcolor=black, citecolor=black, 
urlcolor=blue]{hyperref}		
\mathtoolsset{showonlyrefs}
\newcommand{\GL}{\mathrm{GL}}
\newcommand{\trasp}[1]{{#1}^\mathsf{T}}					%
		
\newcommand{\R}{\mathbb{R}}	
\newcommand{\C}{\mathbb{C}}		

\newcommand{\Sp}{\mathrm{Sp}}

\newcommand{\Lagr}{\Lambda}

\newcommand{\iRS}{\iota^{\textup{RS}}}		
\newcommand{\iAr}{\iota^{\textup{Ar}}}		
\newcommand{\iL}{\iota^{\textup{L}}}

\newcommand{\Lin}{\mathscr{L}}
\newcommand{\Linsa}{\mathscr{L}^{\textup{sa}}}
\newcommand{\Graph}{\mathrm{Gr\,}}
\newcommand{\mul}{\mathrm{mul\,}}
\newcommand{\ssp}{\mathfrak{sp}}

\newcommand{\Sym}{\mathrm{Sym}}

\newcommand{\Bsym}{\mathrm{B}_{\textup{sym}}}

\newcommand{\codim}{\mathrm{codim}\,}


\newcommand{\Bil}{\mathscr{B}}					

\newcommand{\norm}[1]{\left\| #1 \right\|}			
				

				%


\newcommand{\iMor}{\mathrm n_-}		
\newcommand{\iMorse}[1]{\iota_{#1}}		
\newcommand{\coiMor}{\coindex}	
\newcommand{\PT}[1]{\mathcal P_T({#1})}

\renewcommand{\L}{L}	
\newcommand{\N}{\mathbb{N}}		

\newcommand{\iMas}[1]{\iota_{#1}}

\newcommand{\iCLM}{\iota^{\scriptscriptstyle{\mathrm{CLM}}}}

\newcommand{\iCZ}{\iota^{\scriptscriptstyle{\mathrm{CZ}}}}
\newcommand{\itriple}{\iota}
\newcommand{\Z}{\mathbb{Z}}		


\newcommand{\irel}{I}

\newcommand{\coindex}{\mathrm{n}_+\,}
\newcommand{\iindex}{\mathrm{n}_-\,}
\newcommand{\nullity}{\mathrm{n}_0\,}
\newcommand{\noo}[1]{\overset {\mbox{%
\lower1pt\hbox{${\scriptstyle o}$}}}{\mathrm n}_{\mbox{%
\lower2pt\hbox{$\scriptstyle#1$}}}}

\DeclareMathOperator{\diag}{diag}		
\DeclareMathOperator{\spfl}{sf}			
\DeclareMathOperator{\sgn}{sgn}		

\renewcommand{\leq}{\leqslant}
\renewcommand{\geq}{\geqslant}

\renewcommand{\=}{\coloneqq}			

\newcommand{\email}[1]{\href{mailto:#1}{\textsf{#1}}}

\usepackage{bm}

\newcommand{\Id}{\mathrm{Id}}

\title{Sturm theory with applications in geometry \\ 
and \\ classical mechanics}
\author{Vivina L. Barutello, Daniel Offin, Alessandro Portaluri\thanks{The first and third 
authors are partially supported by Prin 2015 ``Variational methods, with applications to problems in mathematical physics and geometry” No.~$\mathrm{2015KB9WPT\_001}$.},  Li Wu}
\date{\today}


\date{\today}
\begin{document}
 \maketitle

\begin{abstract}

Classical Sturm non-oscillation and  comparison theorems as well as the Sturm theorem on zeros for solutions of second order differential equations have a natural symplectic version, since they describe the rotation of a line in the phase plane of the equation. In the higher dimensional  symplectic version of these theorems, lines are replaced  by Lagrangian subspaces and intersections with a given line are replaced by non-transversality instants with a distinguished Lagrangian subspace.  Thus the symplectic Sturm theorems describe some properties of the Maslov index. 

Starting from the celebrated  paper of Arnol'd on symplectic Sturm theory for optical Hamiltonians,  we provide a generalization of his results to general Hamiltonians.  We finally apply  these results  for detecting some geometrical information about the distribution of conjugate and focal points on semi-Riemannian manifolds and for studying the geometrical properties of the solutions space of singular Lagrangian systems arising  in Celestial Mechanics. 

\vskip0.2truecm
\noindent
\textbf{AMS Subject Classification:\/}  53D12, 70F05, 70F10.\vskip0.1truecm
\noindent
{\bf Keywords\/}: Maslov index, Conley-Zehnder index, Hamiltonian dynamics, conjugate points, Kepler problem.
\end{abstract}




\section*{Introduction}\label{sec:Introduction and  description of the problem}

Symplectic Sturm theory has a lot of predecessor, like Morse, Lidskii, Bott, Edwards, Givental  who proved the Lagrangian nonoscillation of the Picard-Fuchs equation for hyperelliptic integrals. The classical Sturm theorems on oscillation, non-oscillation, alternation and comparison for a second-order ordinary differential equation have a symplectic nature. They, in fact, describe 
the rotation of a straight line through the origin of the phase plane of the equation. A line through the origin is a special 1-dimensional subspace of the phase plane: it is, in fact a Lagrangian subspace. 

Starting from this observation, as clearly observed and described by Arnol'd in \cite{Arn86}, the higher-dimensional symplectic generalization of the Sturm theory has been obtained by replacing lines by Lagrangian subspaces and instants of intersections between lines, by  instants of non-transversality. Such instants in the terminology of Arnol'd  has been termed {\em moments of verticality\/}. Thus, in higher dimension, the rotation of a straight line through the origin has been replaced by the evolution of a Lagrangian subspace through the phase flow of the linear Hamiltonian system in the phase space. The  phase flow  defines, in this way,  a curve of Lagrangian subspaces and moments of verticality correspond to the intersection instants between this curve and a hypersurface (with singularities) in the Lagrangian Grassmannian manifold, called (in the Arnol'd terminology), the {\em train\/} of a distinguished Lagrangian subspace. Such a train  is a transversally oriented variety and by using such an orientation, it is possible to define an integer-valued intersection index, called Maslov index. Generically, in a local chart of the Lagrangian Grassmannian manifold, the contribution to the Maslov index of a $\mathscr C^1$-Lagrangian curve, is through the signature of a quadratic form, the so-called {\em crossing form\/}. In some particular cases it can actually happen that the signature coincides with the coindex, namely with the positive inertia index of the crossing form. If this property holds at each crossing, the Lagrangian curve is called {\em positive curve\/} or {\em plus curve\/}. This property strongly depends upon the choice of the distinguished Lagrangian subspace in the sense that a curve could be a plus curve with respect to a Lagrangian subspace $L_0$ but not respect  to a different $L_1$. Often, in the applications, such a distinguished  Lagrangian subspace $L_0$ is uniquely determined by the boundary conditions imposed on the problem. 

As already observed, Arnol'd proved Sturm nonoscillation, alternation and comparison theorems in the case of {\em optical\/} or {\em positively twisted\/} Hamiltonians, namely Hamiltonian functions such that the flow lines of the lifted Hamiltonian flow on the Lagrangian Grassmannian manifold level are positive curves with respect to a distinguished Lagrangian. This kind of Hamiltonians often occur in applications. It is well-known, in fact,  that Legendre convex Hamiltonians in the cotangent bundle with the canonical symplectic form are optical with respect to the vertical section.

However, in many interesting applications, the lifted Hamiltonian flow at the Lagrangian Grassmannian level is not a positive curve with respect to a fixed Lagrangian subspace or, otherwise said, the Hamiltonians are not optical with respect to a  distinguished section of the cotangent bundle. This is for instance the case of the evolution of a Lagrangian subspace through the phase flow induced by the Jacobi deviation equation  along a spacelike geodesic in a Lorentzian manifold or more generally of a geodesic of any causal character on a semi-Riemannian manifold having non-trivial signature. (Cfr. \cite{PPT04, MPP05, MPP07, GPP04} and references therein).

Surprisingly, Sturm alternation and  comparison theorems  still hold in the case of not optical Hamiltonians. Sturm alternation theorem actually gives an estimate between the difference of the Maslov indices computed with respect to two different Lagrangians. By using the transition functions of the atlas of the Lagrangian Grassmannian, authors in \cite[Proposition 3.3 \& Corollary 3.4]{JP09} proved an estimate on the difference of Maslov indices and then applied this estimate  in order to obtained some comparison results between conjugate and focal points in the semi-Riemannian world.  Inspired by \cite{JP09}, in this paper we prove, among others, a sharper estimate of the difference between two Maslov indices with respect to two different Lagrangian subspaces (see Theorem \ref{thm:main-corollary-alternation}). The main idea in order to provide this estimates mainly relies on the H\"ormander index whose vocation is precisely to measure such a difference. 

By using this estimate together with the Bott-Long  type iteration formula we provide, in Proposition~\ref{thm:stima-conley-iterate}, an estimate between the Conley-Zehnder index of an iterated periodic orbit of a Hamiltonian system and the Conley-Zehnder index of the orbit on its prime period.
Furthermore we give an estimate between the Conley-Zehnder index and the Maslov index with respect to a distinguished Lagrangian $L$ of a Lagrangian curve constructed by letting evolving $L$ under the phase flow of a Hamiltonian system. The interest for this study is mainly related to the following  fact. In the case of  symmetric periodic orbits it is possible to associate in a natural fashion the Conley-Zehnder index as well as the Maslov index with respect to a fixed Lagrangian subspace.   In the case of autonomous Hamiltonian systems with discrete symmetries (e.g. reversible Hamiltonian systems) the (symmetric) periodic solutions can be interpreted either as periodic orbits or as Lagrangian intersection points and hence they have both indices naturally associated. 

Another interesting result of the present paper is Theorem \ref{thm:sturm-nonoscillation} which is nothing but the Sturm nonoscillation theorem. This result is somehow hybrid and has in its own the Lagrangian and the Hamiltonian nature of the problem. If the Hamiltonian is natural (meaning that it is the sum of the kinetic and the potential energy) in which the kinetic part is a positive quadratic form in the momentum variables  and the potential part is a non-positive definite quadratic form in the configuration variables, then the moment of non-transversality between the Lagrangian curve induced by the lifted phase flow at Lagrangian Grassmannian level and the Dirichlet  Lagrangian is less or equal than the number of degrees of freedom.  We observe that these  assumptions on the kinetic and potential energy, don't insure that the induced Lagrangian curve is a plus curve with respect to any Lagrangian subspace different from the Dirichlet (which is the  Lagrangian corresponding to the coordinate plane of vanishing configuration variables). However, these signature assumptions, insure that the Lagrangian function is non-negative. This is a pretty important information and gives deep insight on the spectral analytic properties of the problem. In fact, up to a shifting constant (discussed in Section \ref{sec:variational}) that is bounded by the number of degrees of freedom, the Maslov index coincides with the Morse index. Now, under the signature assumptions on the kinetic and potential energy, it follows that the Morse index is zero and hence the the Maslov index is bounded by the number of degrees of freedom. This, however, is not the end of the story, since the bound on the Maslov index doesn't imply, in general, a bound on the total number of crossing  instants. However, in the case of plus curve, it does. This is why in the theorem the Maslov intersection index is considered with respect to the Dirichlet Lagrangian (and in fact such a  Hamiltonian is Dirichlet optical, being Legendre convex). 

An extremely useful result in applications is Theorem \ref{thm:comparison-1}: a  generalized version of the Sturm comparison theorem. In this case, on the contrary, is not important to work with plus Lagrangian curves. This fact, has been already recognized by the third author in \cite{MPP07}. Loosely speaking, the monotonicity between Hamiltonian vector fields implies an inequality on the Maslov index and if the Hamiltonian system is induced by a second order  Lagrangian system  $\mathscr C^2$-convex in the velocity, this implies an inequality on the Morse indices.  From a technical viewpoint the proof of this result is essentially based upon the homotopy invariance of the Maslov index. An essential ingredient in the proof is provided by a spectral flow formula for paths of unbounded self-adjoint firs order (Fredholm) operators with dense domain in $L^2$. 

Finally in the last section we provide some applications essentially in differential topology and classical mechanics. More precisely, we prove some interesting new estimates about the conjugate and focal points along geodesics on semi-Riemannian manifolds, improving the estimates provided by authors in \cite[Section 4]{JP09}. We stress on the fact that classical comparison theorems for conjugate and focal points in Riemannian manifolds and more generally on Lorenzian manifolds but for timelike geodesics, requires curvature assumptions or Morse index arguments. On general semi-Riemannian manifolds having non-trivial signature, the curvature is never bounded and the index form has always infinite Morse index and co-index.  The second application we provide is based upon an application of the Sturm comparison theorem to  the  Kepler problem in the plane with fixed (negative) energy.

Considerable effort has been focused on improving the readability of the manuscript and on explaining  the main ideas and involved techniques.

\tableofcontents


\subsection*{Notation}

For the sake of the reader, we introduce some notation that we shall use henceforth without further reference throughout the paper. 
\begin{itemize}
\item[-] We denote by $V, W$  finite dimensional real vector spaces; $\Lin(V,W)$ and $\Bil(V,W)$ respectively the  vector spaces of all {\em linear operators\/} $T: V \to W$ and of {\em bilinear forms\/} $B: V\times W \to \R$; by $V^*$ we denote the dual space of $V$, i.e. $V^*=\Lin(V, \R)$. In  shorthand notation we set 
$\Lin(V)\=\Lin (V,V)$ and $\Bil(V)\=\Bil(V,V)$. $\Linsa(V)$ denotes the subset of $\Lin(V)$ of all linear self-adjoint operators on $V$. 
There is a {\em canonical isomorphism\/} 
\[
 \Lin(V, W^*) \ni T \to B_T \in \Bil(V,W) \textrm{ such that  }  B_T(v,w)\= T(v)(w), \quad \forall\, v \in V,\,
\forall\, w \in W.
\]
$\Id_V$ or in shorthand notation just $\Id$ denotes the identity;
\item[-] For $T \in \Lin (V, W)$, we define the {\em pull-back\/} of $C \in \Bil(W)$ through the map  $T$ as 
\[
 T^*: \Bil(W) \to \Bil(V) \textrm{ given by  } T^*(C)\= C(T\cdot, T\cdot)
\]
and if $T$ is an {\em isomorphism\/} we define the {\em push-forward\/} of $B \in \Bil(V)$  through $T$ as the map:
\[
 T_*: \Bil(V) \to \Bil(W)  \textrm{ given by  } T_*(B)\= B(T^{-1}\cdot, T^{-1}\cdot).
\]
Given  a linear operator $T:V \to V$, we denote by $\Graph(T)\subset V^2$ its graph. If $T=\Id$, its graph coincide with the diagonal subspace $\Delta\subset V\times V$.
\item[-] $\Bsym(V)$ is   the vector space of all symmetric bilinear forms on $V$. For any $B\in\Bsym(V)$, we denote by $\iindex(B)$,
$\nullity(B)$ and $\coindex(B)$ respectively speaking the {\em index},  the   {\em nullity} and the {\em coindex\/}  of $B$.
The {\em  signature\/} of  $B$ is the difference
$\sgn(B)\=\coindex(B)-\iMor(B)$\\ $B$ is termed  {\em
non-degenerate\/} if $\nullity(B)=0$. 
\item[-]  $(V, \omega)$ denotes a $2n$-dimensional (real) symplectic vector space and  $J$ denotes a complex structure on $V$;  $\Sp(V, \omega)$ the {\em symplectic group \/};  $\ssp(V,\omega)$ denotes the symplectic Lie algebra. $\GL(V)$ denotes  the general linear group. The  {\em symplectic group\/}  of $(\R^{2n}, \omega)$ is denoted by $\Sp(2n)$ and its  Lie algebra simply  by $\ssp(2n)$. We refer to a  matrix in $\ssp(2n)$ as the set of  {\em Hamiltonian matrices\/}. 
\item[-] $
 \PT{V,\omega}\=\Set{\psi\in \mathscr C^0\big([0,T], \Sp(V,\omega)\big)| \psi(0)=\Id} $
 where $\PT{V,\omega}$  is equipped with the topology induced from $(V, \omega)$. $\PT{2n}$ denotes the set $\PT{V,\omega}$ in the case in which $(V, \omega)= (T^*\R^n,\omega_0)$.
 \item[-] $\Lambda(V,\omega)$ denotes the Lagrangian Grassmannian of $(V,\omega)$ whereas $\Lambda(n)$ denotes the Lagrangian Grassmannian of the standard $2n$-dimensional symplectic space. 
 \end{itemize}


\subsection*{Acknowledgements}
The third name author wishes to thank all faculties and staff at the Queen's University (Kingston)  for providing excellent working conditions during his stay and especially his wife, Annalisa, that  has been extremely supportive of him throughout this entire period  and has made countless sacrifices to help him  get to this point.

%

\section{Variational framework and an Index Theorem}\label{sec:variational}

This section is devoted to recall some basic definitions and results about the Lagrangian and Hamiltonian dynamics that we shall need later on.  The main result in this section is a Morse-type index theorem given at Theorem \ref{thm:index} relating the Morse index of a critical point $x$ of the Lagrangian action functional with the Maslov-type index of $z_x$ corresponding to $x$ through the Legendre transform. Our basic references are \cite{Dui76, APS08, HS09}.

Let $T\R^n\cong \R^n \oplus \R^n$ be the tangent space of $\R^n$ endowed with coordinates $(q,v)$. Given  $T>0$ and the Lagrangian function  $L \in \mathscr C^2([0,T]\times T\R^n, \R)$, we assume that the following two assumptions hold
\begin{itemize}
\item[{\bf (L1)\/}]  $L$ is $\mathscr C^2$-convex with respect to $v$,  that is the quadratic form  
\[
\langle \partial_{vv} L(t,q,v) \cdot, \cdot \rangle \quad \textrm{ is positive definite } \quad \forall\,  t\in [0,T],\ \   \forall\, (q,v)\in T\R^n
\]
\item[{\bf (L2)\/}] $L$ is {\em exactly quadratic\/} in the velocities $v$ meaning  that the function $L(t,q,v)$ is a polynomial of degree at most $2$ with respect to $v$.
 \end{itemize}
Under the assumption (L1)  the Legendre transform  defined by 
\begin{equation}
\mathscr L_L:[0,T] \times T\R^n \to [0,T] \times T^*\R^n, \qquad (t,q,v) \mapsto \big(t,q,D_v L(t,q,v) \big)
\end{equation}
is a $\mathscr C^1$ (local) diffeomorphism. 
\begin{rem}
	The assumption (L2) is in order to guarantee that the action functional is twice Frechét differentiable. It is well-known, in fact, that the smoothness assumption on the Lagrangian is in general not enough. The growth condition required in (L2) is related to the regularity of the Nemitski operators. For further details we refer to \cite{PWY19}  and references therein. 
\end{rem}
We denote by $H\=W^{1,2}([0,T], \R^n)$ be the  space of paths having Sobolev regularity $W^{1,2}$ and we define the Lagrangian action functional $A: H\to \R$ as follows
\begin{equation}\label{eq:action}
	A(x)=\int_0^T L\big(t, x(t), x'(t)\big)\, dt.
\end{equation}
Let $Z \subset \R^n \oplus \R^n$ be a linear subspace and let us consider the linear subspace 
\[
H_Z\=\Set{x \in H| \big(x(0), x(T)\big) \in Z}.
\]
\begin{note}
In what follows we shall denote by $A_Z$ the restriction of the action $A$ onto $H_Z$; thus in symbols we have $A_Z\= A\big\vert_{H_Z}$.
\end{note}
It is well-know that critical points of the functional $A$ on $H_Z$ are weak (in the Sobolev sense) solutions of the following boundary value problem
\begin{equation}\label{eq:bvp}
\begin{cases}
\dfrac{d}{dt}\partial_v L\big(t, x(t), x'(t)\big)= \partial_q L\big(t, x(t), x'(t)\big), \qquad t \in [0,T]\\
\big(x(0), x(T)\big) \in Z, \quad \Big(\partial_v L\big(0, x(0), x'(0)\big), -\partial_v L\big(T, x(T), x'(T)\big)\Big)\in Z^\perp
\end{cases}
\end{equation}
where $Z^\perp$ denotes the orthogonal complement of $Z$ in $T^*\R^n$  
and up to standard elliptic regularity arguments, classical (i.e. smooth) solutions. 
\begin{rem}
We observe, in fact, that there is an identification of $Z\times Z^\perp$ and the  conormal subspace of $Z$, namely $N^*(Z)$ in $T^*\R^n$. For further details, we refer the interested reader to \cite{APS08}.	
\end{rem}
We assume that $x \in H_Z$ is a classical solution of the boundary value problem given in Equation~\eqref{eq:bvp}.  We observe that,  by assumption (L2) the functional $A$ is twice Fréchet differentiable on $H$. Being the evaluation map from  $H_Z$ into $H$ a smooth submersion, also the restriction $A_Z$ is twice  Fréchet differentiable and by this  we get that $d^2A_Z(x)$ coincides with $D^2A_Z(x)$.

By computing the second variation of $A_Z$ at $x$ we get
\begin{equation}
d^2A_Z(x)[\xi, \eta] =\int_0^T \big[\langle P(t)\xi'+Q(t) \xi, \eta'\rangle+ \langle \trasp{Q}(t)\xi', \eta \rangle + \langle R(t) \xi, \eta\rangle \big] \, dt, \qquad \forall\, \xi,\eta \in H_Z
\end{equation}
\begin{multline}
\textrm{ where }\  P(t)\=\partial_{vv}L\big(t,x(t),x'(t)\big), \quad  Q(t)\=\partial_{qv}L\big(t,x(t),x'(t)\big)\\ \textrm{ and finally }  R(t)\=\partial_{qq}L\big(t,x(t),x'(t)\big).
\end{multline}
Now, by linearizing the ODE  given in Equation \eqref{eq:bvp} at $x$, we finally get  the (linear) Morse-Sturm boundary value problem defined as follows
\begin{equation}\label{eq:MS-system}
\begin{cases}
	-\dfrac{d}{dt}\big[P(t)u'+ Q(t) u\big] + \trasp{Q}(t) u'+R(t)u=0, \qquad t \in [0,T]\\
	\big(u(0), u(T)\big) \in Z, \quad \Big(Pu'(0)+Q(0)u(0),-\big[P(T)u'(T)+Q(T) u(T)\big]\Big)\in Z^\perp
	\end{cases}
\end{equation}
We observe that $u$ is a weak (in the Sobolev sense)  solution of the boundary value problem given in  Equation \eqref{eq:MS-system} if and only if $u \in \ker I$. Moreover,  by elliptic bootstrap it follows that $u$ is a smooth solution.  

Let us now consider the {\em standard symplectic space\/} $T^*\R^n\cong \R^n \oplus \R^n$ endowed with the {\em canonical symplectic form\/}
\begin{equation}\label{eq:standard-form}
	\omega_0\big((p_1,q_1),(p_2,q_2)\big)\= \langle p_1, q_2 \rangle - \langle q_1, p_2 \rangle. 
\end{equation}
Denoting by $J_0$ the {\em (standard) complex structure\/}  namely the automorphism $J_0:T^*\R^n\to T^*\R^n$  defined  by  $  J_0(p,q)=(-q, p) $   whose associated matrix is  given by 
\begin{equation}\label{eq:J0}
J_0= \begin{pmatrix}
  0 & -\Id\\
  \Id & 0  
 \end{pmatrix} 
\end{equation}
it immediately follows that $\omega_0 (z_1,z_2)\= \langle J_0 z_1, z_2\rangle$ for all $z_1,z_2 \in T^*\R^n $.
\begin{note}
In what follows, $T^*\R^n$ is endowed with a coordinate system  $z=(p,q)$, where $p=(p_1, \dots, p_n)\in \R^n$ and $q=(q_1, \dots, q_n)\in \R^n$. we shall refer to $q$ as {\em configuration variables\/} and to $p$ as the {\em momentum variables\/}. 
\end{note}
By setting $z(t)\= \trasp{\big(P(t)u'(t)+Q(t)u(t), u(t)\big)}$,  the Morse-Sturm equation reduces  to the following (first order) Hamiltonian system in the standard symplectic space
\begin{multline}\label{eq:hamsys-finale}
z'(t)=J_0 B(t)\, z(t), \qquad t \in [0,T] \quad \textrm{ where }\\
B(t)\=\begin{bmatrix}
	P^{-1}(t) & -P^{-1}(t) Q(t)\\
	-\trasp{Q}(t)P^{-1}(t) & \trasp{Q}(t) P^{-1}(t) Q(t) -R(t)
\end{bmatrix}
\end{multline}
We now define the {\em double standard symplectic space\/} $(\R^{2n}\oplus \R^{2n}, -\omega_0 \oplus \omega_0)$ and we introduce the matrix  $\widetilde J_0\=\diag(-J_0,J_0)$ where $\diag(*,*)$ denotes the $2 \times 2$ diagonal block matrix. In this way,  the subspace $L_Z$ given  by 
\begin{equation}\label{eq:utile-p-focali}
L_Z\= \widetilde J_0(Z^\perp\oplus Z)
\end{equation}
is thus Lagrangian.  
\begin{note}
The following notation will be used throughout the paper. 
If $x$ is a solution of \eqref{eq:bvp} we denote by  $z_x$ the corresponding function defined by 
\begin{equation}\label{eq:z-x}
\big(t, z_x(t)\big)= \mathscr L_L\big(t,x(t),x'(t)\big).
\end{equation} 
\end{note}
\begin{defn}\label{def:I_Morse}
Let $x$ be a critical point of $A$. We denote by $\iMorse{ Z}(x)$ the {\em Morse index\/} of $x$ namely 
\[
\iMorse{Z}(x)\=\sup\Set{\dim L| L \subseteq H_Z \textrm{ and } d^2 A(x)_{L\times L} \textrm{ is negative definite}} \in \N \cup \{+\infty\}.
\]
Let $z_x$ be defined in  Equation \eqref{eq:z-x}. We define the {\em Maslov index of $z_x$\/} as the integer given by 
\begin{equation}
	\iMas{L_Z}(z_x)\=\iCLM\big(L_Z, \Graph \psi(t); t \in [0,T]\big)
\end{equation}
where $\psi$ denotes the fundamental solution of the Hamiltonian system given in Equation \eqref{eq:hamsys-finale}.
\end{defn}
\begin{thm}\label{thm:index}
	Under the previous notation and if assumptions (L1) \& (L2) are fulfilled  the functional $A:H_Z \to \R$ is of regularity class $\mathscr C^2$. 
	
	If $x$ is a critical point of $A_Z$, then  $\iMorse{Z}(x)$ is finite. Moreover there exists a non-negative integer $c(Z) \in \{0,\ldots,n\}$ such that the following equality holds
	\begin{equation}
		\iMorse{Z}(x)=\iMas{L_Z}(z_x)-c(Z).
	\end{equation}
\end{thm}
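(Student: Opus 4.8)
The plan is to reduce the statement to a Morse index theorem for the Morse–Sturm boundary value problem \eqref{eq:MS-system} and then quote (or re-derive) the identification of the spectral/Morse count with a Maslov-type index, keeping careful track of the boundary correction $c(Z)$. First I would establish the $\mathscr C^2$-regularity of $A_Z$: by assumption (L2) the integrand is a polynomial of degree at most two in $v$, so the associated Nemitski operator on $H=W^{1,2}([0,T],\R^n)$ is twice Fréchet differentiable (this is the point of (L2), as noted in the remark following it), and since $H_Z\hookrightarrow H$ is the inclusion of a closed subspace — equivalently the evaluation map $H_Z\to H$ is a smooth submersion — the restriction $A_Z$ inherits $\mathscr C^2$-regularity and $d^2A_Z(x)=D^2A_Z(x)$, exactly as stated in the paragraph preceding the theorem. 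Then I would record that at a critical point $x$ the Hessian $d^2A_Z(x)$ is the bounded symmetric bilinear form $I$ whose associated self-adjoint (Fredholm) operator has the Morse–Sturm operator as its strong form; finiteness of $\iMorse{Z}(x)$ then follows because, using (L1) (positive definiteness of $P(t)=\partial_{vv}L$), the form $I$ is a compact perturbation of the positive-definite form $\xi\mapsto\int_0^T\langle P(t)\xi',\xi'\rangle\,dt$ on $H_Z$, hence has finite negative inertia index (a standard Legendre/Riesz argument).

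Next I would set up the symplectic side. Via the substitution $z(t)=\trasp{(P(t)u'(t)+Q(t)u(t),\,u(t))}$ the Morse–Sturm equation becomes the linear Hamiltonian system \eqref{eq:hamsys-finale} $z'=J_0B(t)z$ with $B(t)$ as displayed, and one checks directly that $B(t)$ is symmetric and that, because $P^{-1}(t)>0$, the crossing form of the Lagrangian path $t\mapsto\Graph\psi(t)$ against $L_Z$ is controlled by $P^{-1}$; in particular the path is a plus curve with respect to $L_Z$ (this is the ``Dirichlet optical'' phenomenon alluded to in the introduction, here in its $Z$-dependent form). The boundary data in \eqref{eq:MS-system} — namely $(u(0),u(T))\in Z$ and $(Pu'(0)+Q(0)u(0),-[P(T)u'(T)+Q(T)u(T)])\in Z^\perp$ — say precisely that $(z(0),z(T))$ lies in the Lagrangian $L_Z=\widetilde J_0(Z^\perp\oplus Z)$ of the double space $(\R^{2n}\oplus\R^{2n},-\omega_0\oplus\omega_0)$, so $\ker I$ is isomorphic to $\{w:(w,\psi(T)w)\in L_Z\}$ and more generally the negative eigenspaces of $I$ are detected by nontrivial intersections of $\Graph\psi(t)$ with $L_Z$ for $t\in(0,T]$.

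The heart of the proof is then the index theorem itself: $\iMorse{Z}(x)=\iMas{L_Z}(z_x)-c(Z)$, where $\iMas{L_Z}(z_x)=\iCLM(L_Z,\Graph\psi(\cdot);[0,T])$. I would obtain it by a now-standard deformation argument: linearly homotope the coefficient $R(t)$ to $R(t)-s\,\Id$ for $s\geq0$ large, which makes the form $I$ (hence the Hessian) positive definite, so its negative inertia index drops to zero; on the symplectic side the same homotopy moves crossings monotonically (plus curve), so the change in $\iMas{L_Z}$ equals the change in $\iMorse{Z}$, and the whole identity follows once one evaluates the constant. The constant $c(Z)$ is exactly the contribution of the initial instant $t=0$, i.e. the ``jump'' $\iCLM$ assigns at the endpoint together with the defect coming from the fact that $\Graph\psi(0)=\Delta$ need not be transverse to $L_Z$; it equals $\dim(\Delta\cap L_Z)$-type quantity plus a signature term, and in all cases lies in $\{0,\dots,n\}$ because $L_Z$ is built from the splitting $Z\oplus Z^\perp$ of $\R^n$ and the relevant intersection has dimension at most $n$. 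Concretely I would cite the comparison between the $\iCLM$ convention and the spectral-flow/Morse count — e.g. the index theorems in \cite{APS08,HS09} (and \cite{Dui76} for the smooth theory) — reducing to checking that our $L_Z$ and $\psi$ satisfy their hypotheses. The main obstacle I anticipate is bookkeeping at the endpoints: reconciling the half-open versus closed interval conventions in the definition of $\iCLM$, the non-transversality of $\Graph\psi(0)$ with $L_Z$, and the precise value of $c(Z)$ as a function of $\dim(Z)$ and the signature of the relevant boundary quadratic form — everything else is a routine Legendre-transform computation plus the homotopy invariance of the Maslov index.
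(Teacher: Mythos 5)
The paper offers no proof of this theorem beyond the citation to \cite[Theorems 2.5 and 3.4]{HS09}, so your proposal has to be judged against that reference. Your outline is, in substance, the standard argument behind that citation: $\mathscr C^2$-regularity of $A_Z$ from (L2), finiteness of $\iMorse{Z}(x)$ from (L1) by writing $d^2A_Z(x)$ as a compact perturbation of the positive definite form $\xi\mapsto\int_0^T\langle P\xi',\xi'\rangle\,dt$, translation of the boundary conditions of \eqref{eq:MS-system} into the condition $(z(0),z(T))\in L_Z$ in the doubled symplectic space, and a one-parameter deformation of $R(t)$ that renders the second variation positive definite while the corresponding change of $\iCLM(L_Z,\Graph\psi_s(\cdot))$ is computed from the crossing form in the deformation parameter; the constant $c(Z)$ is then read off at the positive definite end and is bounded by $n$ because it is governed by the intersection pattern of $\Delta$ with $L_Z$. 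So the route is essentially the one the paper delegates to \cite{HS09}.

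There is, however, one assertion you should repair, because as stated it is false and, if actually used, would break the bookkeeping: the path $t\mapsto\Graph\psi(t)$ is \emph{not} an $L_Z$-plus curve for general $Z$, and there is no ``$Z$-dependent form'' of the Dirichlet-optical phenomenon. The crossing form of $\Graph\psi(\cdot)$ against $L_Z$ at $t_0$ is $v\mapsto\langle B(t_0)v,v\rangle$ restricted to the relevant intersection, and the matrix $B(t)$ of \eqref{eq:hamsys-finale} has lower-right block $\trasp{Q}(t)P^{-1}(t)Q(t)-R(t)$, which is not sign-definite; positivity via $P^{-1}$ only occurs when the intersection vectors have vanishing configuration component, which is exactly the Dirichlet case and fails for, e.g., Neumann or periodic $Z$. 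Fortunately this is not needed: the monotonicity your deformation argument requires is in the parameter $s$, not in $t$, and there the crossing form is negative definite because kernel elements at a crossing correspond to nontrivial solutions $u$ of the Morse--Sturm system and the $s$-derivative of the quadratic form on such a $u$ is $-\int_0^T|u(t)|^2\,dt$ (note also that to make the form positive definite you should deform $R(t)$ to $R(t)+s\,\Id$, not $R(t)-s\,\Id$, given the sign with which $R$ enters $d^2A_Z$). With those two corrections your plan goes through and reproduces the cited result.
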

\begin{proof}
For the proof of this result we refer the reader to  \cite[Theorem 3.4 \& Theorem 2.5]{HS09}.
\end{proof}
\begin{rem}\label{rem:comp_c(Z)}
The integer $c(Z)$ depend upon the boundary conditions. However the authors in \cite[Section 3]{HS09}, computed  $c(Z)$ in some interesting cases.
\begin{itemize}
	\item {\bf (Periodic)\/} $Z\=\Delta\subset \R^n \oplus \R^n$ (where $\Delta$ denotes the graph of the identity in $\R^n$) and $c(Z)=n$
	\item {\bf (Dirichlet)\/} $Z\=Z_1\oplus Z_2=(0)\oplus (0)$ and $c(Z)=n$
\item {\bf (Neumann)\/} $Z\=\R^n\times \R^n$ and $c(Z)=0$
\end{itemize}
We observe that in the case of separate boundary conditions, i.e. $Z= Z_1 \oplus Z_2$, then we get that $c(Z)=\dim (Z_1^\perp \cap Z_2^\perp)$. (Cfr.\cite[Equation (3.28)]{HS09} for further details). 
\end{rem}
\begin{rem}
It is not surprising that in the Dirichlet case and in the Neumann we get the $n$ and $0$. In fact the Morse index of a critical point $x \in H$ of the action $A$ get its  largest possible value with respect to Neumann boundary conditions and the smallest possible value with respect to  Dirichlet  boundary conditions.
\end{rem}
The last result of this section provides a bound on the Maslov index of $z_x$ when $x$ is a minimizer. 
\begin{prop}\label{thm:main-non-oscillation-miimizer} Let $x$ be a minimizer for $A_Z$. Then 
\[
\iMas{L_Z}(z_x)\in \{0,\ldots,n\}.
\]
\end{prop}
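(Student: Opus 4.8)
The plan is to reduce the statement immediately to Theorem~\ref{thm:index}, the only nontrivial input being the observation that a minimizer carries vanishing Morse index. First I would note that if $x$ is a minimizer for $A_Z$ then in particular $x$ is a critical point of $A_Z$, so all the hypotheses of Theorem~\ref{thm:index} are in force and both $\iMorse{Z}(x)$ and $\iMas{L_Z}(z_x)$ are defined, with
\begin{equation}
\iMorse{Z}(x)=\iMas{L_Z}(z_x)-c(Z),\qquad c(Z)\in\{0,\ldots,n\}.
\end{equation}

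Next I would argue that $\iMorse{Z}(x)=0$. Since $x$ minimizes $A_Z$ on $H_Z$ (locally suffices), the quadratic form $d^2A_Z(x)$ — which by assumption (L2) and the submersion property of the evaluation map agrees with $D^2A_Z(x)$, as recalled before Definition~\ref{def:I_Morse} — is positive semidefinite on $H_Z$. Hence there is no nonzero subspace $L\subseteq H_Z$ on which $d^2A_Z(x)$ is negative definite, and therefore $\iMorse{Z}(x)=\sup\{\dim L\mid \ldots\}=0$ by Definition~\ref{def:I_Morse}.

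Combining the two displays gives $\iMas{L_Z}(z_x)=\iMorse{Z}(x)+c(Z)=c(Z)$, and since $c(Z)\in\{0,\ldots,n\}$ by Theorem~\ref{thm:index}, we conclude $\iMas{L_Z}(z_x)\in\{0,\ldots,n\}$, as claimed.

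There is no real obstacle here: the content has been packaged into Theorem~\ref{thm:index}, and the only point requiring (minimal) care is the passage from "minimizer" to "positive semidefinite second variation" — one should make sure this is used correctly regardless of whether the minimizer is global or merely local, which is harmless since local minimality already forces nonnegativity of $d^2A_Z(x)$. If one wished to be completely self-contained about the sharpness of the bound, one could remark that the value $c(Z)$ is indeed attained, e.g.\ in the Dirichlet or periodic cases where $c(Z)=n$, but this is not needed for the statement.
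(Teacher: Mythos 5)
Your proposal is correct and follows essentially the same route as the paper: a minimizer has positive semidefinite second variation, hence $\iMorse{Z}(x)=0$, and Theorem~\ref{thm:index} then gives $\iMas{L_Z}(z_x)=c(Z)\in\{0,\ldots,n\}$. The only difference is that you spell out the standard step from local minimality to vanishing Morse index, which the paper takes for granted.
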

\begin{proof}
Being $x$ minimizer, it follow that $\iMorse{Z}(x)=0$ and by Theorem \ref{thm:index}, we get that 
\[
\iMas{L_Z}(z_x)=c(Z).
\]
The conclusion now follows from the fact that $c(Z)\in \{0,\ldots,n\}$. 
\end{proof}
 A direct consequence of Proposition \ref{thm:main-non-oscillation-miimizer} in the case of natural Lagrangian, namely Lagrangian of the form 
\begin{equation}\label{eq:quadratic-Lagrangian}
L(t,q,v)= K(v)- V(q)
\end{equation}
where as usually $K(v)$ and $V(q)$ denote respectively the kinetic and the potential function, is the following result. 
\begin{cor}\label{thm:main-non-oscillation-corollary} Let $L$ be a  $\mathscr C^2$-natural Lagrangian having a   $\mathscr C^2$-concave potential energy and let $x \in H_Z$ be a critical point of $A_Z$. Then 
\[
\iMas{L_Z}(z_x)\in \{0,\ldots,n\}.
\]
\end{cor}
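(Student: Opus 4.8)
The plan is to deduce this corollary directly from Proposition~\ref{thm:main-non-oscillation-miimizer} by showing that, under the stated hypotheses, any critical point of $A_Z$ is automatically a (global) minimizer. The key structural feature is that a natural Lagrangian $L(t,q,v)=K(v)-V(q)$ with $K$ convex in $v$ (which is exactly hypothesis (L1), since $\partial_{vv}L=\partial_{vv}K$ is positive definite) and with $-V$ convex in $q$ (which is the $\mathscr C^2$-concavity of the potential $V$, i.e. $\partial_{qq}V\le 0$ hence $R(t)=\partial_{qq}L=-\partial_{qq}V\ge 0$) is a \emph{convex} function of $(q,v)$ jointly. Indeed the Hessian $\partial^2 L$ is block-diagonal with blocks $\partial_{vv}K>0$ and $-\partial_{qq}V\ge 0$, hence positive semidefinite; the mixed term $Q(t)=\partial_{qv}L$ vanishes identically.

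First I would record that $A_Z$ is a convex functional on the affine/linear space $H_Z$: for $x\in H_Z$ one has $d^2A_Z(x)[\xi,\xi]=\int_0^T\big[\langle P(t)\xi',\xi'\rangle+\langle R(t)\xi,\xi\rangle\big]\,dt\ge 0$ for all $\xi\in H_Z$, since $P(t)=\partial_{vv}K>0$ and $R(t)=-\partial_{qq}V\ge 0$ and the cross terms involving $Q$ are absent. More robustly, convexity of $L$ in $(q,v)$ implies convexity of $x\mapsto A(x)$ on $H$ by the usual argument (pointwise convexity is preserved by integration against the linear path $t\mapsto\big(t,(1-s)x_0(t)+sx_1(t),\,(1-s)x_0'(t)+sx_1'(t)\big)$), and restriction to the linear subspace $H_Z$ preserves convexity. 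Hence $A_Z$ is convex.

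Next I would invoke the elementary fact that for a $\mathscr C^2$ convex functional on a linear space (or affine subspace), every critical point is a global minimizer: if $x$ is critical and $y\in H_Z$ is arbitrary, the function $s\mapsto A_Z\big(x+s(y-x)\big)$ is convex with vanishing derivative at $s=0$, hence nondecreasing on $[0,1]$, so $A_Z(y)\ge A_Z(x)$. Therefore $x$ is a minimizer of $A_Z$, and Proposition~\ref{thm:main-non-oscillation-miimizer} applies verbatim to give $\iMas{L_Z}(z_x)\in\{0,\ldots,n\}$.

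I do not expect any serious obstacle here; the only point requiring a modicum of care is checking that the joint convexity hypothesis really follows from (L1) together with concavity of $V$ — that is, confirming $\partial_{qv}L\equiv 0$ for a natural Lagrangian and that the resulting block-diagonal Hessian $\mathrm{diag}(\partial_{vv}K,\,-\partial_{qq}V)$ is positive semidefinite — and then that integration preserves convexity and restriction to the linear subspace $H_Z$ preserves it too. Once convexity of $A_Z$ is in hand, the reduction to Proposition~\ref{thm:main-non-oscillation-miimizer} is immediate.
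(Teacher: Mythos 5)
Your proposal is correct and follows essentially the same route as the paper: both rest on the observation that a natural Lagrangian $L(t,q,v)=K(v)-V(q)$ with convex $K$ and concave $V$ has block-diagonal, positive semidefinite Hessian in $(q,v)$, which forces $d^2A_Z(x)\geq 0$ and hence $\iMorse{Z}(x)=0$, after which Theorem~\ref{thm:index} gives $\iMas{L_Z}(z_x)=c(Z)\in\{0,\ldots,n\}$. Your extra intermediate step (critical point of a convex functional is a global minimizer, so Proposition~\ref{thm:main-non-oscillation-miimizer} applies verbatim) is a harmless repackaging of the same argument.
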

\begin{proof}
Being $L(t,q,v)=K(v)-V(q)$, we get that the Lagrangian function $L$ is $\mathscr C^2$-convex. Let $x\in H$ be a critical point of $A$. By the $\mathscr C^2$-convexity of the Lagrangian, we get that $\iMorse{}(x)=0$ on $H$ and in particular $\iMorse{Z}(x)=0$ for every $Z\subset \R^n\oplus \R^n$. By Theorem \ref{thm:index} \[
\iMas{L_Z}(z_x) = c(Z),
\]
and the conclusion now follows by using Proposition \ref{thm:main-non-oscillation-miimizer}. 
\end{proof}
\begin{rem}
	A common $Z$, often occurring in the applications, is represented by $Z\=Z_1 \oplus (0)$ where $Z_1$ is a linear subspace of $ \R^n$. This subspace directly appears in the classical Sturm non-oscillation theorem \cite[Section 1]{Arn86}.
\end{rem}


\section{Sturm Theory and symplectic geometry}\label{sec:symplectic-sturm-theorems}

The aim of this section is to provide a generalization of the Sturm Alternation and Comparison Theorems proved by Arnol'd in \cite{Arn86} in the case of optical Hamiltonian. The abstract idea behind these results relies on a careful estimates of the H\"ormander (four-fold) index which is used for comparing and estimating the difference of the Maslov indices with respect to two different Lagrangian subspaces.   Our basic reference for this section  is \cite[Section 3]{ZWZ18} and references therein. We stress on the fact that, even in the (classical) case of optical Hamiltonians, we provide new and sharper estimates.
For the sake of the reader, we refer to Section \ref{sec:Maslov} for the main definitions and properties of the intersection indices  as well as for the basic properties of the Lagrangian Grassmannian $\Lagr(V,\omega)$ of the symplectic space $(V,\omega)$.
 
\subsection{A generalization of Sturm Alternation Theorem}
In the $2n$-dimensional symplectic space  $(V,\omega)$, let us consider $\lambda \in \mathscr C^0\big([a,b], \Lagr(V,\omega)\big)$ and $\mu_1, \mu_2 \in \Lagr(V,\omega)$. We now define the two non-negative integers $k_1, k_2$ given by 
\[
\begin{split}
k_1 & \= \min\{\dim \epsilon_1,\dim \epsilon_2\} \textrm{ for }  \epsilon_1\=  \lambda(a) \cap \lambda(b) + \lambda(b) \cap \mu_1  \textrm{ and }  \epsilon_2\= \lambda(a) \cap \lambda(b) + \lambda(b) \cap \mu_2\\
k_2 & \= \min\{\dim \delta_1,\dim \delta_2\} \textrm{ for } \delta_1\= \lambda(a) \cap \mu_1 + \mu_1 \cap \mu_2  \textrm{ and }  \delta_2\=  \lambda(b) \cap \mu_1 + \mu_1 \cap \mu_2
\end{split}
\]
and we let $k\= \max\{k_1,k_2\}$. We are in position to state and to prove the first main result of this section.

\begin{thm}\label{thm:main-1}
Under the previous notation, the following inequality holds: 	
\begin{equation}
\Big \vert\iCLM\big(\mu_2, \lambda(t); t \in [a,b]\big)-\iCLM\big(\mu_1, \lambda(t); t \in [a,b]\big)\Big\vert \leq n- k.
\end{equation}
\end{thm}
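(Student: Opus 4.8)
The plan is to recognize the left‑hand side as a Hörmander (four‑fold) index of the quadruple $\big(\lambda(a),\lambda(b),\mu_1,\mu_2\big)$ and then to control that index by two ``dual'' rank estimates, one yielding the bound $n-k_1$ and the other the bound $n-k_2$. The first step is to show that the integer
\[
D\=\iCLM\big(\mu_2,\lambda(t);t\in[a,b]\big)-\iCLM\big(\mu_1,\lambda(t);t\in[a,b]\big)
\]
depends only on the four Lagrangians $\lambda(a),\lambda(b),\mu_1,\mu_2$, not on the continuous path $\lambda$ joining $\lambda(a)$ to $\lambda(b)$. Indeed, if $\lambda'$ is another such path, then $\lambda\ast\overline{\lambda'}$ is a loop and, by catenation and path‑reversal of the CLM index, $\iCLM(\mu_i,\lambda)-\iCLM(\mu_i,\lambda')$ is the Maslov index of that loop; since the latter is the image of the loop in $\pi_1\big(\Lagr(V,\omega)\big)\cong\Z$, it does not depend on the reference Lagrangian $\mu_i$, so the two differences coincide. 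Hence $D$ realizes the Hörmander index $\mathfrak s\big(\lambda(a),\lambda(b);\mu_1,\mu_2\big)$ and we may replace $\lambda$ by any convenient path with the same endpoints.

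\emph{The bound $|D|\leq n-k_1$.} Recalling from \cite[Section 3]{ZWZ18} that the Hörmander index is a $\Z$‑linear combination of Maslov triple indices of subfamilies of $\{\lambda(a),\lambda(b),\mu_1,\mu_2\}$ — equivalently, a difference of signatures of explicit symmetric forms attached to these Lagrangians — the quantity $|D|$ is bounded by the rank of the ambient form. The task is then to locate inside that form an isotropic subspace of dimension at least $k_1$: the directions coming from $\lambda(a)\cap\lambda(b)$ together with those coming from $\lambda(b)\cap\mu_i$ lie in the radical, which forces the effective rank down to at most $n-\min\{\dim\epsilon_1,\dim\epsilon_2\}=n-k_1$. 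Concretely I would use the path‑independence to choose $\lambda$ with $\lambda(a)\cap\lambda(b)\subseteq\lambda(t)$ for all $t$ (legitimate, since $\lambda(a)\cap\lambda(b)$ sits inside both endpoints), pass to the symplectic reduction modulo $\lambda(a)\cap\lambda(b)$ — in which $\lambda(a)$ and $\lambda(b)$ become transverse — and there account only for the contributions of $\lambda(b)\cap\mu_i$.

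\emph{The bound $|D|\leq n-k_2$, and conclusion.} The Hörmander index changes only by an overall sign under interchanging the ordered pair $\big(\lambda(a),\lambda(b)\big)$ with $\big(\mu_1,\mu_2\big)$ and under transposing the two members of a pair, and $|D|$ is insensitive to such signs. Running the previous step with the roles of the $\lambda$‑endpoints and of $\mu_1,\mu_2$ exchanged — the relevant form now living on $\mu_1$, with forced‑null directions those of $\lambda(a)\cap\mu_1$, $\lambda(b)\cap\mu_1$ and $\mu_1\cap\mu_2$, i.e. of $\delta_1$ and $\delta_2$ — yields $|D|\leq n-\min\{\dim\delta_1,\dim\delta_2\}=n-k_2$. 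Therefore $|D|\leq\min\{n-k_1,\,n-k_2\}=n-\max\{k_1,k_2\}=n-k$, which is the assertion.

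I expect the main obstacle to be the sharp rank bookkeeping in the two bounding steps: proving that the radical of the relevant (triple‑index) form actually contains a subspace of dimension at least $\min\{\dim\epsilon_1,\dim\epsilon_2\}$, and likewise $\min\{\dim\delta_1,\dim\delta_2\}$ — rather than merely a subspace of dimension $\dim(\lambda(a)\cap\lambda(b))$ or $\dim(\lambda(b)\cap\mu_i)$ taken in isolation. This requires analyzing how the two families of automatically degenerate directions interact and, for non‑transverse configurations, carefully tracking the endpoint correction terms in the reduction formula for the CLM index. Everything else — homotopy invariance, catenation, reduction, and the symmetry of the four‑fold index — is formal from the axiomatic properties of the Maslov index.
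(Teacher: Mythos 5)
Your proposal follows essentially the same route as the paper's proof: identify the difference of the two $\iCLM$-indices with the H\"ormander index $s\big(\lambda(a),\lambda(b);\mu_1,\mu_2\big)$, decompose it in two dual ways as a difference of triple indices via \cite[Theorem 1.1]{ZWZ18} (Proposition \ref{thm:mainli}), and bound each triple index by $n-\dim\epsilon_i$, resp.\ $n-\dim\delta_i$, because it is the extended coindex of a quadratic form on a Lagrangian of the symplectic reduction modulo $\epsilon_i$, resp.\ $\delta_i$ (Equation \eqref{eq:triple-coindex-extended} and Remark \ref{rem:molto-utile-stima}); the paper cites these facts rather than re-deriving them by an explicit choice of path and reduction as you propose. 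One step of your plan is, however, stated incorrectly: the H\"ormander index does \emph{not} change merely by an overall sign when the pair $\big(\lambda(a),\lambda(b)\big)$ is interchanged with $(\mu_1,\mu_2)$ --- the correct identity is $s(\lambda_1,\lambda_2;\mu_1,\mu_2)=-s(\mu_1,\mu_2;\lambda_1,\lambda_2)+\sum_{j,k}(-1)^{j+k+1}\dim(\lambda_j\cap\mu_k)$, and the correction term vanishes only under transversality hypotheses you have not assumed. The bound $|D|\leq n-k_2$ should instead be obtained from the second equality $s(\lambda_1,\lambda_2;\mu_1,\mu_2)=\itriple(\lambda_1,\mu_1,\mu_2)-\itriple(\lambda_2,\mu_1,\mu_2)$, which produces exactly the two forms with radicals containing $\delta_1$ and $\delta_2$ that you describe; this is what the paper does. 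Finally, be careful with the phrase ``$|D|$ is bounded by the rank of the ambient form'': the decisive point is that each $\itriple$ is a \emph{non-negative} integer bounded by $n-\dim\epsilon_i$, so the difference of the two lies in $[-(n-\dim\epsilon_1),\,n-\dim\epsilon_2]$ and is controlled by $n-\min_i\dim\epsilon_i=n-k_1$; a naive rank bound on a difference of signatures would lose a factor of $2$.
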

\begin{proof}
The proof of this result is  a consequence of Proposition \ref{thm:mainli}, Equation \eqref{eq:triple-coindex-extended} and Remark \ref{rem:molto-utile-stima}. 
First of all, we start to observe that 
\begin{multline}\label{eq:utile}
	\iCLM\big(\mu_2, \lambda(t); t \in [a,b]\big)-\iCLM\big(\mu_1, \lambda(t); t \in [a,b]\big)= s(\lambda(a), \lambda(b); \mu_1, \mu_2)\\=\itriple(\lambda(a), \lambda(b), \mu_2)-\itriple(\lambda(a), \lambda(b), \mu_1) =\itriple(\lambda(a), \mu_1, \mu_2)-\itriple(\lambda(b), \mu_1, \mu_2)
\end{multline}
For $i=1,2$, we denote by  $\pi_{\epsilon_i}$ (resp.  $\pi_{\delta_i}$) the projection onto the symplectic reduction mod $\epsilon_i$ (resp. $\delta_i$). So, we have 
\begin{multline}
\itriple(\lambda(a), \lambda(b), \mu_1)= \noo{+}Q\big(\pi_{\epsilon_1} \lambda(a), \pi_{\epsilon_1} \lambda(b);\pi_{\epsilon_1} \mu_1\big)=\noo{+}Q_{\epsilon_1}\\ \itriple(\lambda(a), \lambda(b), \mu_2)= \noo{+}Q\big(\pi_{\epsilon_2} \lambda(a), \pi_{\epsilon_2} \lambda(b);\pi_{\epsilon_2} \mu_2\big)=\noo{+}Q_{\epsilon_2}\\ 
\itriple(\lambda(a), \mu_1, \mu_2)= \noo{+}Q(\pi_{\delta_1} \lambda(a), \pi_{\delta_1} \mu_1;\pi_{\delta_1} \mu_2)=\noo{+}Q_{\delta_1}\\\
\itriple(\lambda(b), \mu_1, \mu_2)= \noo{+}Q(\pi_{\delta_2} \lambda(b), \pi_{\delta_2} \mu_1;\pi_{\delta_2} \mu_2)=\noo{+}Q_{\delta_2}\
\end{multline}
Since $\dim V_{\epsilon_i}= 2(n-\dim\epsilon_i)$ (resp. $\dim V_{\delta_i}= 2(n-\dim\delta_i)$), it follows that $Q_{\epsilon_i}$ (resp.  $Q_{\delta_i}$) are quadratic forms on $n-\dim\epsilon_i$ (resp. $n-\dim\delta_i$) vector space. So, the inertia indices are integers between $0$ and $n-\dim\epsilon_i$ (resp. $n-\dim\delta_i$). In conclusion, we get that 
\begin{multline}
	0 \le \itriple(\lambda(a), \lambda(b), \mu_1)\le n-\dim\epsilon_1 \le n-k_1, \qquad 0 \le \itriple(\lambda(a), \lambda(b), \mu_2)\le n-\dim\epsilon_2 \le n-k_1\\
	0 \le \itriple(\lambda(a), \mu_1, \mu_2)\le n-\dim\delta_1 \le n-k_2, \qquad 0 \le \itriple(\lambda(b), \mu_1, \mu_2)\le n-\dim\delta_2 \le n-k_2
\end{multline}
By using these inequalities together with Equation~\eqref{eq:utile}, we get that 
\begin{multline}\label{eq:da-mettere-insieme}
\big\vert\iCLM\big(\mu_2, \lambda(t); t \in [a,b]\big)-\iCLM\big(\mu_1, \lambda(t); t \in [a,b]\big)\big\vert=\big\vert\itriple(\lambda(a), \lambda(b), \mu_2)-\itriple(\lambda(a), \lambda(b), \mu_1)\big\vert \\ = \big\vert\noo{+}Q_{\epsilon_2}- \noo{+}Q_{\epsilon_1}\big\vert\le n- k_1 \\
\big\vert\iCLM\big(\mu_2, \lambda(t); t \in [a,b]\big)-\iCLM\big(\mu_1, \lambda(t); t \in [a,b]\big)\big\vert=\big\vert\itriple(\lambda(a), \mu_1, \mu_2)-\itriple(\lambda(b), \mu_1, \mu_2)\big\vert\\=\big\vert \noo{+}Q_{\delta_1}- \noo{+}Q_{\delta_2}\big\vert\le n- k_2
\end{multline}
Putting the inequalities given in Formula~\ref{eq:da-mettere-insieme} all together, we get
\begin{equation}
\Big \vert\iCLM\big(\mu_2, \lambda(t); t \in [a,b]\big)-\iCLM\big(\mu_1, \lambda(t); t \in [a,b]\big)\Big\vert \leq n- k
\end{equation}
where $k=\max\{k_1,k_2\}$. This concludes the proof. 
\end{proof}
\begin{rem}
Loosely speaking, by  Theorem \ref{thm:main-1}, we can conclude that the smaller is the difference of a Lagrangian path with respect to two  Lagrangian subspaces the higher is the intersection between them.   
\end{rem} 
\begin{cor}\label{thm:pigeon}
	Under the notation of Theorem \ref{thm:main-1} and assuming that  $\lambda\cap \mu_1= \lambda(b)\cap \mu_2 = \emptyset$, we get 
	\begin{equation}
\Big\vert\iCLM\big(\mu_2, \lambda(t); t \in [a,b]\big)-\iCLM\big(\mu_1, \lambda(t); t \in [a,b]\big) \Big\vert \leq n- \dim I
\end{equation}
where $I\= \lambda(a)\cap \lambda(b)$.  In particular, if $\lambda$ is a closed path, then we get that
	\[
	 \iCLM\big(\mu_2, \lambda(t); t \in [a,b]\big) = \iCLM\big(\mu_1, \lambda(t); t \in [a,b]\big).
	\] 
\end{cor}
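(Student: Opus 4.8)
The plan is to derive the estimate directly from Theorem \ref{thm:main-1} by specializing the geometry. Under the hypothesis $\lambda(t) \cap \mu_1 = \lambda(b)\cap\mu_2 = \{0\}$ (I read the stated "$=\emptyset$" as "trivial intersection"), each of the four subspaces appearing in the definitions of $k_1$ and $k_2$ simplifies. First I would observe that $\lambda(b)\cap\mu_1 = \{0\}$ since $\lambda(t)\cap\mu_1$ is trivial for all $t$, so $\epsilon_1 = \lambda(a)\cap\lambda(b) + \lambda(b)\cap\mu_1 = \lambda(a)\cap\lambda(b) = I$. Likewise $\epsilon_2 = \lambda(a)\cap\lambda(b) + \lambda(b)\cap\mu_2 = I + \{0\} = I$. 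Hence $k_1 = \min\{\dim\epsilon_1, \dim\epsilon_2\} = \dim I$. Since $k = \max\{k_1, k_2\} \geq k_1 = \dim I$, Theorem \ref{thm:main-1} immediately yields
\[
\Big\vert\iCLM\big(\mu_2, \lambda(t); t \in [a,b]\big)-\iCLM\big(\mu_1, \lambda(t); t \in [a,b]\big) \Big\vert \leq n- k \leq n - \dim I,
\]
which is the first assertion.

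For the second assertion, suppose $\lambda$ is a closed path, i.e. $\lambda(a) = \lambda(b)$. Then $I = \lambda(a)\cap\lambda(b) = \lambda(a)$, which is a Lagrangian subspace of the $2n$-dimensional space, so $\dim I = n$. Plugging into the inequality just proved gives
\[
\Big\vert\iCLM\big(\mu_2, \lambda(t); t \in [a,b]\big)-\iCLM\big(\mu_1, \lambda(t); t \in [a,b]\big)\Big\vert \leq n - n = 0,
\]
forcing the two Maslov indices to coincide. This completes the argument.

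The only genuine point requiring care — and what I expect to be the one subtlety rather than a real obstacle — is the bookkeeping on which transversality hypotheses are actually needed: the statement only assumes $\lambda(t)\cap\mu_1$ trivial along the whole path and $\lambda(b)\cap\mu_2$ trivial at the endpoint, and one must check that this is exactly enough to make $\epsilon_1$ and $\epsilon_2$ collapse to $I$ (the behavior of $\delta_1,\delta_2$, hence of $k_2$, is irrelevant since we only need the lower bound $k\geq k_1$). No estimate on the crossing forms themselves is needed; everything is a formal consequence of Theorem \ref{thm:main-1} together with the elementary fact that a Lagrangian subspace has dimension $n$.
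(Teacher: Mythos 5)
Your argument is correct and follows essentially the same route as the paper: both reduce to showing $k \geq k_1 \geq \dim I$ and then invoke Theorem \ref{thm:main-1}, with the closed-path case following from $\dim I = n$. The only (harmless) difference is that you use the transversality hypotheses to compute $\epsilon_1=\epsilon_2=I$ exactly, whereas the paper only needs the containment $I\subseteq \epsilon_i$, which holds by the very definition of $\epsilon_i$ — so the first inequality in fact does not require the transversality assumptions at all.
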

\begin{proof}
We observe that
\[
\lambda(b)\cap \mu_2\subseteq \epsilon_2 \ \textrm{ 	and } \  \lambda(b)\cap \mu_1\subseteq \epsilon_1 \quad \Rightarrow \quad \dim \epsilon_2 \geq \dim I \ \textrm{ 	and } \  \dim \epsilon_1 \geq \dim I.
\]
By this, we get that  $n-k\leq n-k_1$ is less or equal than  $  n - \dim I$. This concludes the proof of the first claim. 

The second claim readily follows by observing that for loops of Lagrangian subspaces, we have $\dim I=n$. 
\end{proof}
\begin{rem}
It is worth noticing that in the case of Lagrangian loops, the $\iCLM$-index is actually independent on the vertex of the train. This property was already pointed out by   Arnol'd in his celebrated paper \cite{Arn67}.
\end{rem}
\begin{cor}\label{thm:pigeon-2}
	Under notation of Theorem \ref{thm:main-1} and if  $\mu_1\cap \lambda(a)=\mu_1 \cap \lambda(b)=\emptyset$, then we have
	\begin{equation}
\Big\vert
\iCLM\big(\mu_2, \lambda(t); t \in [a,b]\big) -
\iCLM\big(\mu_1, \lambda(t); t \in [a,b]\big) 
\Big\vert \leq n- \dim J
\end{equation}
where $J\=  \mu_1 \cap \mu_2$. 
\end{cor}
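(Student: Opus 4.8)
The plan is to deduce Corollary~\ref{thm:pigeon-2} from Theorem~\ref{thm:main-1} in exactly the same spirit as the proof of Corollary~\ref{thm:pigeon}, only this time exploiting the transversality hypotheses at $\mu_1$ rather than at $\lambda$. Recall that $k=\max\{k_1,k_2\}$ with
\[
k_2 = \min\{\dim\delta_1,\dim\delta_2\}, \qquad \delta_1 = \lambda(a)\cap\mu_1 + \mu_1\cap\mu_2, \quad \delta_2 = \lambda(b)\cap\mu_1 + \mu_1\cap\mu_2.
\]
Under the assumption $\mu_1\cap\lambda(a)=\mu_1\cap\lambda(b)=\{0\}$ the first summand in each of $\delta_1$ and $\delta_2$ is trivial, so $\delta_1 = \delta_2 = \mu_1\cap\mu_2 = J$. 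Hence $\dim\delta_1 = \dim\delta_2 = \dim J$, which gives $k_2 = \dim J$ directly. Since $k = \max\{k_1,k_2\}\geq k_2 = \dim J$, we get $n-k\leq n-\dim J$, and plugging this into the inequality of Theorem~\ref{thm:main-1} yields the claim.

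So the key steps, in order, are: (1) write out $\delta_1,\delta_2$ from the definition preceding Theorem~\ref{thm:main-1}; (2) observe that the transversality hypotheses kill the terms $\lambda(a)\cap\mu_1$ and $\lambda(b)\cap\mu_1$, so both $\delta_i$ collapse to $\mu_1\cap\mu_2=J$; (3) conclude $k_2=\dim J$ and therefore $k\geq\dim J$; (4) substitute $n-k\leq n-\dim J$ into Theorem~\ref{thm:main-1}. There is essentially no computational content here — this is a one-line specialization — and I do not anticipate any genuine obstacle; the only thing to be careful about is matching the roles of $\delta_1,\delta_2$ versus $\epsilon_1,\epsilon_2$ correctly (in Corollary~\ref{thm:pigeon} the hypotheses were on $\lambda\cap\mu_i$ and controlled the $\epsilon$'s, whereas here the hypotheses are on $\mu_1\cap\lambda(\cdot)$ and control the $\delta$'s), so that one uses the bound $k\geq k_2$ rather than $k\geq k_1$.

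One expository point worth flagging: unlike Corollary~\ref{thm:pigeon}, there is no analogue here of the ``closed path'' conclusion, because the relevant intersection $J=\mu_1\cap\mu_2$ has nothing to do with whether $\lambda$ is a loop; its dimension is simply governed by the mutual position of the two distinguished Lagrangians, and in general $0\leq\dim J\leq n$. So the statement as given is already the sharp consequence one can extract in this case, and the proof is complete once step (4) above is carried out.
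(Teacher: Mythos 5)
Your proof is correct and follows essentially the same route as the paper: both arguments reduce to showing $k\geq k_2\geq\dim J$ and then substituting into Theorem~\ref{thm:main-1}. The only (immaterial) difference is that you use the transversality hypotheses to get the equalities $\delta_1=\delta_2=J$, whereas the paper only invokes the containments $J\subseteq\delta_1$ and $J\subseteq\delta_2$, which already suffice for the inequality $\dim\delta_i\geq\dim J$.
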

\begin{proof}
We observe that
\[
\mu_1\cap \mu_2\subseteq \delta_1 \ \textrm{ 	and } \  \mu_1\cap \mu_2\subseteq \delta_2 \quad \Rightarrow \quad \dim \delta_1 \geq \dim J \ \textrm{ 	and } \  \dim \delta_2 \geq \dim J.
\]
By this, we get that $n-k \leq n-k_2$ is  less or equal than  $  n - \dim J$. 
\end{proof}
\begin{rem}
We observe that if the four Lagrangians $\lambda(a), \lambda(b), \mu_1, \mu_2$ are mutually transversal, then $k=0$. Thus in this case the modulus of the difference of the Maslov indices computed with respect to two (distinguished) Lagrangian is bounded by $n$.
\end{rem}
\begin{rem}
We observe that Corollary \ref{thm:pigeon} and  Corollary \ref{thm:pigeon-2}  are well-known. More precisely  Corollary \ref{thm:pigeon} agrees with  \cite[Corollary 3.4]{JP09}  and Corollary \ref{thm:pigeon-2} corresponds to  \cite[Proposition 3.3]{JP09}. As by-product of the previous arguments we get that the inequalities proved by authors in aforementioned paper were not sharp. 

It is worth noticing that the proof provided by authors is completely different from the one given in the present paper and it mainly relies on a careful estimate of  the inertial indices of symmetric bilinear forms obtained by using the atlas of the Lagrangian Grassamannian and its transition functions. 
\end{rem}

\begin{thm}\label{thm:main-3}
Let 	$ L_0, L_1, L_2 \in \Lagr(V,\omega)$, $ \psi \in \PT{V, \omega}$ and for every $ t \in [0,T]$, we let 
\[
\ell_1(t)\= \psi(t)L_1, \quad \ell_2(t)\= \psi(t)L_2 \textrm{ and } \mu_0(t)\= \psi^{-1}(t)L_0.
\]
Thus, we have 
\begin{equation}
\Big \vert
\iCLM\big(L_0,\ell_1(t); t \in [a,b]\big) - \iCLM\big(L_0,\ell_2(t); t \in [a,b]\big)
\Big\vert 
\leq n- k
\end{equation}
where $k\=\min\{\dim \epsilon_a,\dim \epsilon_b\}$ and where 
$\epsilon_a\= L_1 \cap L_2 + L_2 \cap L_0 $ while $\epsilon_b\=  L_1 \cap L_2 + L_2 \cap \mu_0(b).$
\end{thm}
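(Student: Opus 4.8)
The plan is to reduce the statement to Theorem~\ref{thm:main-1} by exploiting the naturality of the $\iCLM$-index under the $\Sp(V,\omega)$-action (see Section~\ref{sec:Maslov}).

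\emph{Reduction via naturality.} For each $t$ the map $\psi(t)$ is symplectic, so $t\mapsto\psi^{-1}(t)$ is a continuous path in $\Sp(V,\omega)$, and the $\iCLM$-index is unchanged if we act with it simultaneously on both Lagrangian entries. Acting on the constant path $L_0$ and on the path $\ell_i(t)=\psi(t)L_i$ turns $L_0$ into $\mu_0(t)=\psi^{-1}(t)L_0$ and $\ell_i$ into the constant path $L_i$, so that
\begin{equation*}
\iCLM\big(L_0,\ell_i(t);t\in[a,b]\big)=\iCLM\big(\mu_0(t),L_i;t\in[a,b]\big),\qquad i=1,2,
\end{equation*}
and hence
\begin{equation*}
\iCLM\big(L_0,\ell_1(t);t\in[a,b]\big)-\iCLM\big(L_0,\ell_2(t);t\in[a,b]\big)=\iCLM\big(\mu_0(t),L_1;t\in[a,b]\big)-\iCLM\big(\mu_0(t),L_2;t\in[a,b]\big).
\end{equation*}

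\emph{Application of Theorem~\ref{thm:main-1}.} The right-hand side is the difference of the $\iCLM$-indices of the \emph{single} Lagrangian path $\lambda\=\mu_0$ with respect to the two fixed Lagrangians $L_1$ and $L_2$, which is exactly the situation of Theorem~\ref{thm:main-1}; I apply it with $\lambda=\mu_0$, $\mu_1=L_2$ and $\mu_2=L_1$. (If one's convention for $\iCLM$ puts the constant Lagrangian in the other argument, one first uses the symmetry of $\iCLM$ in its two entries, or, equivalently, repeats verbatim the H\"ormander-index computation of the proof of Theorem~\ref{thm:main-1}, which is insensitive to which slot carries the path.) This yields
\begin{equation*}
\Big|\iCLM\big(\mu_0(t),L_1;t\in[a,b]\big)-\iCLM\big(\mu_0(t),L_2;t\in[a,b]\big)\Big|\leq n-\max\{k_1^{\circ},k_2^{\circ}\}\leq n-k_2^{\circ},
\end{equation*}
where $k_2^{\circ}=\min\{\dim(\mu_0(a)\cap L_2+L_2\cap L_1),\ \dim(\mu_0(b)\cap L_2+L_2\cap L_1)\}$ is the quantity ``$k_2$'' of Theorem~\ref{thm:main-1} for this data (and $k_1^{\circ}$ the corresponding ``$k_1$'', which we simply discard).

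\emph{Identification of $k_2^{\circ}$ with $k$ and conclusion.} Since $\psi\in\PT{V,\omega}$ we have $\psi(0)=\Id$; taking $a=0$ — which is implicitly forced, as $\epsilon_a$ is written in the statement through $L_0=\mu_0(0)$ — gives $\mu_0(a)=L_0$, hence
\begin{equation*}
\mu_0(a)\cap L_2+L_2\cap L_1=L_0\cap L_2+L_2\cap L_1=\epsilon_a,\qquad \mu_0(b)\cap L_2+L_2\cap L_1=\epsilon_b,
\end{equation*}
so that $k_2^{\circ}=\min\{\dim\epsilon_a,\dim\epsilon_b\}=k$. Chaining this with the two previous displays gives
\begin{equation*}
\Big|\iCLM\big(L_0,\ell_1(t);t\in[a,b]\big)-\iCLM\big(L_0,\ell_2(t);t\in[a,b]\big)\Big|\leq n-k,
\end{equation*}
which is the claim. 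I expect the only delicate point to be the bookkeeping of the $\iCLM$-conventions (the order of the two Lagrangian arguments and the direction of the naturality action) and, within that, choosing the matching $(\mu_1,\mu_2)=(L_2,L_1)$ so that exactly the ``$k_2$'' term of Theorem~\ref{thm:main-1} reproduces $k=\min\{\dim\epsilon_a,\dim\epsilon_b\}$; the substance of the estimate is already contained in Theorem~\ref{thm:main-1}.
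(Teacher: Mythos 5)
Your argument is correct and is essentially the paper's own proof: the identical reduction by symplectic invariance of $\iCLM$ to the quantities $\iCLM\big(\mu_0(t),L_i\big)$, followed by the H\"ormander/triple-index estimate underlying Theorem~\ref{thm:main-1}, with the same identification of $\epsilon_a$ and $\epsilon_b$. One small caveat: $\iCLM$ is \emph{not} symmetric in its two arguments (by Equation~\eqref{eq:clm-rs} and the antisymmetry of the crossing form one has $\iCLM(\ell_1,\ell_2)+\iCLM(\ell_2,\ell_1)=-[h_{12}(b)-h_{12}(a)]$), so of the two fixes you offer only the second is valid — but that one is exactly what Definition~\ref{def:hormander} and Proposition~\ref{thm:mainli} provide, since $s(L_1,L_2;L_0,\mu_0(b))$ may be computed with the path in either slot and its triple-index expression $\itriple(L_1,L_2,\mu_0(b))-\itriple(L_1,L_2,L_0)$ is bounded through $\epsilon_b$ and $\epsilon_a$ precisely as you identify.
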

\begin{proof}
By taking into account the symplectic invariance of the $\iCLM$-index,  we get 
\[
\iCLM\big(L_0, \ell_1(t); t \in [a,b]\big) =
\iCLM\big(\psi(t)^{-1}L_0,  L_1; t \in [a,b]\big)= \iCLM\big(\mu_0(t),  L_1; t \in [a,b]\big)
\]
and
\[
\iCLM\big(L_0, \ell_2(t); t \in [a,b]\big)= 
\iCLM\big(\psi(t)^{-1} L_0,L_2; t \in [a,b]\big)= \iCLM\big(\mu_0(t),L_2; t \in [a,b]\big).
\]
Moreover 
\begin{multline}
\iCLM\big(\mu_0(t),  L_2; t \in [a,b]\big)-
\iCLM\big(\mu_0(t),  L_1; t \in [a,b]\big)=
s\big(L_1,L_2; L_0, \mu_0(b)\big)\\
=\itriple(L_1,L_2,L_0) - \itriple\big(L_1,L_2, \mu_0(b)\big).
\end{multline}
The proof now immediately follows by theorem \ref{thm:main-1}. 
\end{proof}
By restricting  Theorem \ref{thm:main-1} to curves of Lagrangian subspaces induced by the evolution of a fixed Lagrangian under the phase flow of a linear Hamiltonian system we get a generalization of the Sturm Alternation Theorem proved by Arnol'd in \cite{Arn86}. More precisely, let us consider the linear Hamiltonian system 
\begin{equation}\label{eq:222}
	z'(t)= J_0 B(t) z(t), \qquad t \in[0,T].
\end{equation}
 Then the following result holds. 
\begin{thm}{\bf [Sturm Alternation Theorem]\/}\label{thm:main-corollary-alternation}. 
Let $L,L_1,L_2\in \Lagr(n)$ and  we  set  $ \ell(t)\=\phi(t)L$ where $\phi$ denotes the fundamental solution of Equation \eqref{eq:222}. Then we get 
\begin{equation}
\Big \vert
\iCLM\big(L_2, \ell(t); t \in [0,T]\big)-
\iCLM\big(L_1, \ell(t); t \in [0,T]\big)
\Big\vert \leq n- k
\end{equation}
where $k\= \max\{k_1,k_2\}$ and 
\[
\begin{split}
k_1 & \= \min\{\dim \epsilon_1,\dim \epsilon_2\} 
\textrm{ for } 
\epsilon_1 \= L \cap \ell(T) + \ell(T) \cap \L_1
\textrm{ and }
\epsilon_2 \= L \cap \ell(T) + \ell(T) \cap L_2  \\
k_2 & \= \min\{\dim \delta_1,\dim \delta_2\} 
\textrm{ for } 
\delta_1  \=  L \cap L_1 + L_1 \cap L_2 
\textrm{ and }
\delta_2\= \ell(T) \cap L_1 + L_1 \cap L_2   .
\end{split}
\]
\end{thm}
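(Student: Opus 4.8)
The plan is to obtain Theorem \ref{thm:main-corollary-alternation} as a direct specialization of Theorem \ref{thm:main-1}, so that no genuinely new argument is required. The single point worth recording at the outset is that, since $\phi$ is the fundamental solution of the linear Hamiltonian system in Equation \eqref{eq:222}, we have $\phi(0)=\Id$, whence $\ell(0)=\phi(0)L=L$; moreover $\phi$ is continuous on $[0,T]$ (being a solution of a linear ODE), so $\ell\in\mathscr C^0\big([0,T],\Lagr(n)\big)$ is an admissible path for Theorem \ref{thm:main-1}.

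First I would apply Theorem \ref{thm:main-1} in the standard $2n$-dimensional symplectic space $(\R^{2n},\omega_0)$ — so that $\Lagr(V,\omega)=\Lagr(n)$ — with the substitution $\lambda\=\ell$, $[a,b]\=[0,T]$, $\mu_1\=L_1$, $\mu_2\=L_2$. Under these identifications $\lambda(a)=\ell(0)=L$ and $\lambda(b)=\ell(T)$, hence $\lambda(a)\cap\lambda(b)=L\cap\ell(T)$; consequently the subspaces $\epsilon_1,\epsilon_2$ produced by Theorem \ref{thm:main-1} are exactly $L\cap\ell(T)+\ell(T)\cap L_1$ and $L\cap\ell(T)+\ell(T)\cap L_2$, i.e. the $\epsilon_1,\epsilon_2$ of the present statement, and similarly $\delta_1=\lambda(a)\cap\mu_1+\mu_1\cap\mu_2=L\cap L_1+L_1\cap L_2$ while $\delta_2=\lambda(b)\cap\mu_1+\mu_1\cap\mu_2=\ell(T)\cap L_1+L_1\cap L_2$, which are the $\delta_1,\delta_2$ here. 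Therefore the integers $k_1=\min\{\dim\epsilon_1,\dim\epsilon_2\}$, $k_2=\min\{\dim\delta_1,\dim\delta_2\}$ and $k=\max\{k_1,k_2\}$ coincide in the two statements.

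Then the conclusion of Theorem \ref{thm:main-1}, namely $\big|\iCLM\big(\mu_2,\lambda(t);t\in[a,b]\big)-\iCLM\big(\mu_1,\lambda(t);t\in[a,b]\big)\big|\le n-k$, is precisely the inequality to be proved. I do not anticipate any real obstacle: the whole substance — the identity expressing the difference of $\iCLM$-indices through the Hörmander four-fold index $s$, together with the reduction of each $\itriple$ term to the positive inertia index of a crossing form $Q_{\epsilon_i}$ (resp. $Q_{\delta_i}$) on a space of dimension $n-\dim\epsilon_i$ (resp. $n-\dim\delta_i$) — is already performed inside the proof of Theorem \ref{thm:main-1}. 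The only thing to check is the consistency of the notational dictionary between the two statements, which is immediate from $\ell(0)=L$.
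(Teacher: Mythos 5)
Your proposal is correct and coincides with the paper's own treatment: the paper derives Theorem \ref{thm:main-corollary-alternation} precisely by specializing Theorem \ref{thm:main-1} to $\lambda(t)=\ell(t)=\phi(t)L$ on $[0,T]$, with $\lambda(0)=L$ since $\phi(0)=\Id$, so that the subspaces $\epsilon_i$, $\delta_i$ and the integer $k$ match exactly as in your notational dictionary. Nothing further is needed.
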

\begin{rem}
We stress on the fact that in the aforementioned paper, Arnol'd proved the Alternation Theorem for the class of quadratic Hamiltonian functions  that are optical with respect to the two distinguished Lagrangian subspaces $L_1$ and $L_2$. In the  classical formulation, author provides a bound on the difference of non-transversality moments of the evolution of a Lagrangian path with respect to two distinguished Lagrangian subspaces. 
\end{rem}


\subsection{Iteration inequalities for periodic boundary conditions}

In this section we provide some simple estimates on the Conley-Zehnder index ofwhich can be obtained directly from Theorem \ref{thm:main-1}. 

Given a  symplectic space $(V,\omega)$, we consider the direct sum $V^2\= V \oplus V$, endowed with the symplectic form  $\omega^2\= -\omega \oplus \omega$, defined as follows 
\[
 \omega^2((v_1, v_2),(w_1,w_2))= -\omega(v_1, v_2) +\omega(w_1, w_2), \qquad \textrm{ for all } v_1, v_2, w_1, w_2 \in V
\]
and we recall that  
\[
 \quad \psi \in \mathscr C^0\big([a,b], \Sp(V,\omega)\big)\quad \Rightarrow \quad  \Graph\psi\in \mathscr C^0\big([a,b],  \Lagr(V^2, \omega^2)\big),
\]
and $\Delta$ is the diagonal subspace of $V\oplus V$.
\begin{defn}\label{def:CZ}
 Let  $\psi\in \mathscr C^0\big([a,b], \Sp(V,\omega)\big)$.  The {\em generalized Conley-Zehnder index\/} of $\psi$ is the integer $\iCZ(\psi)$ defined as follows
 \[
  \iCZ(\psi(t); t \in [a,b])
  \=
  \iCLM\big(\Delta, \Graph \psi(t); t \in [a,b] \big).
 \]
\end{defn}
\begin{rem}
We observe that the Conley-Zehnder index was originally defined for symplectic paths having non-degenerate final endpoint meaning that $\Graph \psi(b) \cap \Delta=\{0\}$. We emphasize that, for curves having degenerate endpoints with respect to $\Delta$ there are several conventions for how the endpoints contribute to the Maslov index. For other different choices we refer the interested reader to \cite{RS93, LZ00, DDP08} and references therein. 
\end{rem}
\begin{lem}\label{thm:maslov-pairs}
Let $L_1,L_2 \in \Lagr(V,\omega)$ and $\psi \in \mathscr C^0\big([a,b], \Sp(V, \omega)\big)$. Then 
\[
 \iCLM\big(L_1\oplus L_2, \Graph\psi(t); t \in [a,b]\big) = \iCLM\big(L_2,  \psi(t ) L_1; t \in [a,b]\big).
 \]
\end{lem}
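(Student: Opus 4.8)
The plan is to straighten the graph path $\Graph\psi(t)$ to a \emph{constant} Lagrangian by a $t$-dependent symplectic frame change, and then to split off one factor. The cheap observation underlying everything is that $(v,\psi(t)v)\mapsto \psi(t)v$ is an isomorphism from $(L_1\oplus L_2)\cap\Graph\psi(t)$ onto $\psi(t)L_1\cap L_2$, so the two sides of the claimed identity already have the same crossing instants with the same nullities; what remains is to match the crossing forms.

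The key device is the map $G_t\colon V^2\to V^2$, $G_t(v,w)\=\big(\psi(t)v,\,w\big)$. Since $\psi(t)\in\Sp(V,\omega)$ one checks at once from $\omega^2=(-\omega)\oplus\omega$ that $G_t\in\Sp(V^2,\omega^2)$, and by construction $G_t\big(\Graph\psi(t)\big)=\Delta$ for every $t$, while $G_t(L_1\oplus L_2)=\psi(t)L_1\oplus L_2$. The first step is then to invoke the invariance of the $\iCLM$-index of a \emph{pair} of Lagrangian paths under conjugation of both entries by a (possibly $t$-dependent) path in $\Sp(V^2,\omega^2)$. I would justify this through the crossing-form description of $\iCLM$ (Section~\ref{sec:Maslov}): at a crossing $t_0$ the Robbin--Salamon crossing form is built from $\omega^2$ applied to derivatives of curves lying in the two Lagrangians, and in $\frac{\d}{\d t}\big|_{t_0}\big(G_t\,m(t)\big)=\dot G_{t_0}m(t_0)+G_{t_0}\dot m(t_0)$ the inhomogeneous term $\dot G_{t_0}m(t_0)$ appears identically in both Lagrangians of the pair and cancels, leaving an $\omega^2$-expression literally fixed by $G_{t_0}\in\Sp(V^2,\omega^2)$. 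Applying this to the pair $\big(L_1\oplus L_2,\Graph\psi(t)\big)$ gives
\[
\iCLM\big(L_1\oplus L_2,\Graph\psi(t);t\in[a,b]\big)=\iCLM\big(\psi(t)L_1\oplus L_2,\Delta;t\in[a,b]\big).
\]
The second step identifies the right-hand side with $\iCLM\big(L_2,\psi(t)L_1\big)$. A crossing $t_0$ of the pair $\big(\psi(t)L_1\oplus L_2,\Delta\big)$ is exactly one where $\psi(t_0)L_1\cap L_2\neq\{0\}$, with intersection $\{(y,y):y\in\psi(t_0)L_1\cap L_2\}$. Since $L_2$ and $\Delta$ are constant, only the $\psi(t)L_1$-direction moves, and for such $y$ and a curve of vectors $\mu(t)\in\psi(t)L_1$ with $\mu(t_0)=y$ the crossing form of $\big(\psi(t)L_1\oplus L_2,\Delta\big)$ at $(y,y)$ is
\[
\omega^2\big((\dot\mu(t_0),0),(y,y)\big)=-\omega\big(\dot\mu(t_0),y\big),
\]
whereas the crossing form of $\big(L_2,\psi(t)L_1\big)$ at $y$ is $\omega\big(-\dot\mu(t_0),y\big)=-\omega\big(\dot\mu(t_0),y\big)$; the two coincide, and one verifies the match is insensitive to which sign convention is used for the crossing form of a pair (the ambiguity flips both forms simultaneously). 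Hence the two $\iCLM$-indices are equal, which is the lemma.

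The only genuinely delicate point is the first step: one must be sure that conjugating a pair of Lagrangian paths by a $t$-dependent symplectic family leaves $\iCLM$ unchanged, \emph{including} the endpoint contributions. This is precisely where the explicit form of $G_t$ pays off — the cancellation of the $\dot G_{t_0}$ terms shows the interior crossing forms are preserved on the nose, and at $t=a,b$ the maps $G_a,G_b$ are ordinary (not merely germ-level) symplectomorphisms, under which every convention for the endpoint contribution is already invariant by the basic properties of $\iCLM$.
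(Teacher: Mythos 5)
Your proof is correct, but it is a genuinely different route from the one in the paper. The paper disposes of this lemma in one line: it quotes the corresponding identity for the Robbin--Salamon index of pairs from \cite[Theorem 3.2]{RS93} and then transfers it to $\iCLM$ via the conversion formula \eqref{eq:clm-rs}; the half-integer correction terms $h_{12}$ on the two sides agree because $(v,\psi(t)v)\mapsto\psi(t)v$ identifies $(L_1\oplus L_2)\cap\Graph\psi(t)$ with $\psi(t)L_1\cap L_2$ --- exactly the ``cheap observation'' you open with. You instead give a self-contained argument: straighten $\Graph\psi(t)$ to $\Delta$ by the path $G_t=(\psi(t),\Id)\in\Sp(V^2,\omega^2)$ and match crossing forms directly. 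Two remarks. First, your ``genuinely delicate point'' is not delicate in this paper's framework: invariance of $\iCLM$ under a $t$-dependent path of symplectomorphisms applied to \emph{both} entries is exactly Property IV (symplectic invariance) listed in Appendix \ref{sec:Maslov}, and is one of the characterizing axioms in \cite{CLM94}; your cancellation of the $\dot G_{t_0}$ terms is a correct re-derivation of it, but you could simply cite it. Second, your crossing-form computations implicitly assume $\psi$ is $\mathscr C^1$ with regular crossings, whereas the lemma is stated for $\psi\in\mathscr C^0$; one should add the standard reduction (homotopy invariance plus a perturbation making all crossings regular, as in Lemma \ref{thm:perturbazione} for the operator-theoretic analogue). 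Your sign bookkeeping is right: the $-\omega$ on the first factor of $\omega^2$ exactly compensates for the moving Lagrangian sitting in the first slot of the pair after conjugation, so both crossing forms equal $-\omega(\dot\mu(t_0),y)$ on the nose, and hence the endpoint contributions (coindex at $a$, minus index at $b$) also transfer correctly. What each approach buys: the paper's citation is shorter; yours makes the endpoint and sign conventions transparent, which is worth having given that the paper juggles $\iCLM$, $\iRS$ and Arnol'd's conventions elsewhere.
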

\begin{proof}
The proof of this result follows by  \cite[Theorem 3.2]{RS93} and Equation \eqref{eq:clm-rs}
\end{proof}

\begin{thm}\label{thm:main-2}
Let $(V,\omega)$ be a $2n$-dimensional symplectic space,  $L, L_0 \in \Lagr(V, \omega)$,  $\psi\in \PT{V,\omega}$ and let    $\ell \in \mathscr C^0\big([0,T], \Lagr(V,\omega)\big) $ be pointwise defined by  $\ell(t)\=\psi(t)\, L$. 
Then the following inequality holds
\begin{equation}
\Big \vert\iCLM\big(L_0, \ell(t); t \in [a,b]\big) -
\iCZ\big(\psi(t); t \in [a,b]\big)\Big\vert \leq 2n- \dim \epsilon
\end{equation}
where $\epsilon$ is the even dimensional subspace defined by $\epsilon\=\Graph P \cap \Delta + \Delta \cap (L\oplus L_0)$  with $P=\psi(T)$.
\end{thm}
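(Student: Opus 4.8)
The plan is to recognize both quantities on the left-hand side as $\iCLM$-indices of the single Lagrangian path $t\mapsto\Graph\psi(t)$ in the doubled symplectic space $(V^2,\omega^2)$, evaluated against two fixed Lagrangian subspaces, and then to apply Theorem~\ref{thm:main-1} in $(V^2,\omega^2)$ — where, since $\dim V^2=4n$, the role of ``$n$'' is played by $2n$.

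First I would rewrite the two indices. By Lemma~\ref{thm:maslov-pairs} with $L_1=L$ and $L_2=L_0$ one has
\[
\iCLM\big(L_0,\ell(t);\,t\in[a,b]\big)=\iCLM\big(L\oplus L_0,\Graph\psi(t);\,t\in[a,b]\big),
\]
while Definition~\ref{def:CZ} gives
\[
\iCZ\big(\psi(t);\,t\in[a,b]\big)=\iCLM\big(\Delta,\Graph\psi(t);\,t\in[a,b]\big).
\]
Hence the left-hand side of the claim is exactly $\big|\iCLM(\mu_2,\lambda(t);t\in[a,b])-\iCLM(\mu_1,\lambda(t);t\in[a,b])\big|$ for the continuous Lagrangian path $\lambda(t)\=\Graph\psi(t)$ in $(V^2,\omega^2)$ and the fixed Lagrangians $\mu_1\=\Delta$ and $\mu_2\=L\oplus L_0$. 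Theorem~\ref{thm:main-1} then yields the bound $\le 2n-k$ with $k=\max\{k_1,k_2\}$, and it remains only to show $k\ge\dim\epsilon$.

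The decisive step is the computation of $k_2=\min\{\dim\delta_1,\dim\delta_2\}$, and here the hypothesis $\psi\in\PT{V,\omega}$ does the work: reading $[a,b]=[0,T]$ (consistent with $\psi\in\PT{V,\omega}$) we have $\psi(a)=\Id$, so $\lambda(a)=\Graph\Id=\Delta=\mu_1$. Consequently $\delta_1=\lambda(a)\cap\mu_1+\mu_1\cap\mu_2=\Delta+\big(\Delta\cap(L\oplus L_0)\big)=\Delta$, which is $2n$-dimensional, whereas $\delta_2=\lambda(b)\cap\mu_1+\mu_1\cap\mu_2=\big(\Graph\psi(T)\cap\Delta\big)+\big(\Delta\cap(L\oplus L_0)\big)$, which is precisely the subspace $\epsilon$ of the statement. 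Since $\epsilon\subseteq\Delta$ gives $\dim\epsilon\le 2n$, I get $k_2=\dim\epsilon$, hence $k\ge k_2=\dim\epsilon$ and $2n-k\le 2n-\dim\epsilon$, as asserted; $k_1$ is never needed. The parity statement (that $\epsilon$ is even-dimensional) is a side remark, not used in the inequality; it can be inspected through the identification $\Delta\cong V$, $(v,v)\mapsto v$, under which $\epsilon$ corresponds to $\ker(\psi(T)-\Id)+(L\cap L_0)$.

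I expect the only genuine obstacle to be organizational rather than technical: one must pass to the doubled space $(V^2,\omega^2)$ and remember that the parameter $n$ of Theorem~\ref{thm:main-1} becomes $2n$ there. Once that bookkeeping is in place, the identity $\lambda(0)=\Delta=\mu_1$ collapses $\delta_1$ onto all of $\Delta$, forces $k_2=\dim\epsilon$, and the estimate drops out immediately.
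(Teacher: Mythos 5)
Your proof is correct and follows essentially the same route as the paper: both reduce the left-hand side, via Lemma~\ref{thm:maslov-pairs} and Definition~\ref{def:CZ}, to the difference of two $\iCLM$-indices of the path $t\mapsto\Graph\psi(t)$ in $(V^2,\omega^2)$, which is a H\"ormander index controlled by the triple index $\itriple\big(\Graph P,\Delta,L\oplus L_0\big)$ on the symplectic reduction mod $\epsilon$. The only cosmetic difference is that you invoke Theorem~\ref{thm:main-1} as a black box (correctly identifying $\delta_2=\epsilon$ and $k_2=\dim\epsilon$), whereas the paper computes $s(\Delta,\Graph P;\Delta,L\oplus L_0)=-\itriple(\Graph P,\Delta,L\oplus L_0)$ directly from Lemma~\ref{thm:properties}(I) and then bounds that triple index by $2n-\dim\epsilon$.
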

Before proving this result, we observe that the maximal dimension of the isotropic subspace $\epsilon$ is an even number less or equal than $2n$. This is for instance the case in which $P=\Id$.
\begin{proof}
By invoking Lemma \ref{thm:maslov-pairs}, we start to observe that  
\begin{equation}
\iCLM (L_0, \ell(t);t \in [0,T])=
\iCLM (L_0,\psi(t)\,L;t \in [0,T])\\
=
\iCLM\big(L\oplus L_0,\Graph\psi(t);t \in [0,T]\big)
\end{equation}
and by Definition \ref{def:CZ}, we know that 
$\iCZ(\psi(t); t \in [0,T])= \iCLM\big(\Delta, \Graph \psi(t); t \in [0,T]\big)$. 
Summing up, we get  
\begin{multline}
\iCLM\big(L_0, \ell(t); t \in [0,T]\big)- \iCZ\big(\psi(t); t \in [0,T]\big)\\=
\iCLM\big(L\oplus L_0, \Graph\psi(t); t \in[0,T]\big)- \iCLM\big(\Delta,\Graph\psi(t); t \in [0,T]\big)\\
=
s \big( \Delta, \Graph P;\Delta, L\oplus L_0\big)
=-\itriple\big(\Graph P, \Delta, L\oplus L_0\big)
\end{multline}
where in the last equality we used Lemma \ref{thm:properties}, (I).  
We observe that $\itriple\big(\Graph P, \Delta, L\oplus L_0\big)$ is equal to the extended coindex of a quadratic form on a Lagrangian subspace of the reduced space $V_\epsilon\=\epsilon^\omega/\epsilon$ (see Equations \eqref{eq:triple-coindex-extended}). Thus the sum of all inertial indices is bounded from above by  $1/2 \dim V_\epsilon$ which is equal to $2n-\dim \epsilon$.
\end{proof}
\begin{rem}
For an explicit computation of the term $\itriple\big(L\oplus L , \Delta, \Graph(P)\big)$, we refer the interested reader to \cite{Por08, FK14} and references therein. 
\end{rem}
\begin{defn}\label{def:Lagrangian-Maslov-index}
Given $L \in \Lagr(V,\omega)$, we term the {\em $L$-Maslov index\/} the integer given by  
\begin{equation}
\iL\big(\psi(t), t \in [a,b] \big)\= \iCLM\big(L \oplus L, \Graph\psi(t); t \in [a,b]\big).
\end{equation}
\end{defn}
As direct consequence of Theorem \ref{thm:main-2} and Definition \ref{def:Lagrangian-Maslov-index} we get the following.
\begin{lem}
Under notation of Theorem \ref{thm:main-2}, the following inequality holds:
\begin{equation}\label{eq:stime-cz-l}
\Big|\iL\big(\psi(t), t \in [a,b]\big) -\iCZ\big(\psi(t); t \in [a,b]\big) \Big| \leq 2n-\dim W \leq n
\end{equation}
where $W\=\Graph P \cap \Delta+ (L\oplus L) \cap \Delta$. 
\end{lem}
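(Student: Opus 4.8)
The plan is to obtain the statement as a direct specialization of Theorem~\ref{thm:main-2} to the case $L_0=L$, followed by an elementary dimension count. First I would unwind the definition of the $L$-Maslov index: by Definition~\ref{def:Lagrangian-Maslov-index} together with Lemma~\ref{thm:maslov-pairs} applied with $L_1=L_2=L$,
\[
\iL\big(\psi(t),\,t\in[a,b]\big)=\iCLM\big(L\oplus L,\Graph\psi(t);\,t\in[a,b]\big)=\iCLM\big(L,\psi(t)L;\,t\in[a,b]\big).
\]
Since in Theorem~\ref{thm:main-2} one has $\ell(t)=\psi(t)L$, the right-hand side above is exactly the quantity $\iCLM\big(L_0,\ell(t);\,t\in[a,b]\big)$ of that theorem when the distinguished Lagrangian is chosen to be $L_0=L$. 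Hence the left-hand side of the inequality we must prove coincides, for this choice of $L_0$, with the left-hand side of the estimate in Theorem~\ref{thm:main-2}.

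Next I would read off what the isotropic subspace $\epsilon$ of Theorem~\ref{thm:main-2} becomes under $L_0=L$. By definition $\epsilon=\Graph P\cap\Delta+\Delta\cap(L\oplus L_0)$ with $P=\psi(T)$, which for $L_0=L$ is precisely $\Graph P\cap\Delta+\Delta\cap(L\oplus L)=W$. Theorem~\ref{thm:main-2} then gives at once
\[
\Big|\iL\big(\psi(t),\,t\in[a,b]\big)-\iCZ\big(\psi(t);\,t\in[a,b]\big)\Big|\leq 2n-\dim W,
\]
which is the first of the two claimed inequalities.

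It remains to establish $2n-\dim W\leq n$, \ie $\dim W\geq n$. For this I would simply note that $\Delta\cap(L\oplus L)=\Set{(v,v)| v\in L}$ is a linear subspace canonically isomorphic to $L$, hence of dimension $n$, and it is a summand of $W$, so $\dim W\geq n$. I do not anticipate any real obstacle: the argument is nothing more than plugging $L_0=L$ into an already proved theorem plus this one-line dimension estimate. The only thing to be slightly careful about is the bookkeeping that identifies the subspace $\epsilon$ appearing in Theorem~\ref{thm:main-2} with the subspace $W$ in the statement, together with the observation that $\Delta\cap(L\oplus L)$ is contained in $W$.
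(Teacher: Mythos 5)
Your proposal is correct and follows exactly the paper's own argument: the first inequality is obtained by specializing Theorem~\ref{thm:main-2} to $L_0=L$ (so that $\epsilon$ becomes $W$), and the second from the inclusion $(L\oplus L)\cap\Delta\subseteq W$, which forces $\dim W\geq n$. Your version merely spells out the bookkeeping more explicitly than the paper does.
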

\begin{proof}
The proof of the first inequality in Equation \eqref{eq:stime-cz-l} comes directly by Theorem \ref{thm:main-2}.   The second inequality follows by observing that $W \supseteq (L\oplus L) \cap \Delta $ and thus  $\dim W \geq n$.
 \end{proof}
Typically in concrete applications, one is faced with the problem of estimating the difference of the $\iCLM$-indices of two different Lagrangian curves with respect to a distinguished Lagrangian subspace. These Lagrangian curves are nothing but the evolution under the phase flow of two distinguished Lagrangians.


We set 
\begin{equation}
D_\omega(M)\= (-1)^{n-1}\overline \omega^n \det(M- \omega \Id), \qquad \omega \in \mathbb U, \ M \in \Sp(2n,\R).
\end{equation}
Then for any $\omega \in \mathbb U$, let us consider the hypersurface in $\Sp(2n)$ defined as  
\begin{equation}
\Sp_\omega^0(2n,\R)\= \Set{M \in \Sp(2n,\C)| D_\omega(M)=0}. 
\end{equation}
As proved by Long (cf. \cite{Lon02} and references therein), for any $M \in \Sp(2n)_\omega^0$, we define a co-orientation of $\Sp(2n)_\omega^0$ at $M$ by the positive direction $\frac{d}{dt}|_{t=0} M e^{tJ}$ of the path $ M e^{tJ}$ with $t \geq 0$ sufficiently small. Let 
\begin{equation}
\Sp_\omega^*(2n,\R)\= \Sp(2n,\R)\setminus \Sp_\omega^0(2n,\R).
\end{equation}
Given $\xi, \eta \in \mathscr C^0\big([0,T], \Sp(2n,\R)\big)$ with $\xi(T)=\eta(0)$, we define the concatenation of the two paths as 
\begin{equation}
(\eta*\xi)(t)= \begin{cases}
 \xi(2t) & 0 \leq t \leq T/2\\
 \eta(2t-T) & T/2 \leq t \leq T
 \end{cases}.
\end{equation}
For any $n \in \N$, we define the following special path $\xi_n  \in \PT{2n}$ as follows 
\begin{equation}
\xi_n(t)= \begin{bmatrix}
 2-\dfrac{t}{T} & 0\\
 0 & \left(2-\dfrac{t}{T}\right)^{-1}
 \end{bmatrix}^{\diamond n}\qquad 0 \leq t \leq T
\end{equation}
where $\diamond$ denotes the diamond  product of matrices. (Cf. \cite{Lon02} for the definition). 
\begin{defn}
For any $\omega \in \mathbb U$ and $\psi \in \PT{2n}$, we define 
\[
\nu_\omega(\psi)\= \dim \ker_\C\big(\psi(T)-\omega \Id\big),
\]
and the {\em $\omega$-Maslov type index\/}  $\iota_\omega(\psi)$ given by setting
\[
\iota_\omega(\psi)\=\Big[e^{-\varepsilon J}\psi*\xi_n: \Sp(2n)_\omega^0\Big]
\] 
that is the intersection index between the path $e^{-\varepsilon J}\psi*\xi_n$ and the transversally oriented hypersurface $\Sp_\omega^0(2n)$.  
\end{defn}
We now set, for any $\psi \in \PT{2n}$,
\[
 \psi_{\kappa+1}(t) = \psi(t-   \kappa T) P^\kappa, \qquad \kappa  T   \leq t \leq (\kappa +1)  T
\]
where $P\= \psi(T)$ and we define the {\em $m$-th iteration\/} $\psi^m \in \mathscr C^0\big([0,mT], \Sp(2n,\R)\big)$ of $\psi$ as follows 
\[
\psi^m(t) \= \psi_{\kappa+1}(t) \textrm{ for } \kappa  T   \leq t \leq (\kappa+1) T \textrm{ and } \kappa=0,1, \dots, m-1.
\]
Based on the index function $\iota_\omega$, Long established (cfr. \cite{Lon02} and references therein) a Bott-type iteration formula for any path $\psi \in \PT{2n}$ that reads as follows 
\begin{multline}\label{eq:Bott-type}
\iota_z\big(\psi^m(t), t \in [0,mT]\big)= \sum_{\omega^m=z} \iota_\omega\big(\psi(t), t \in [0,T]\big) \textrm{ and } \\\nu_z\big(\psi^m(t), t \in [0,mT]\big)= \sum_{\omega^m=z}\nu_\omega\big(\psi(t), t \in [0,T]\big).
\end{multline}
\begin{lem}\label{thm:cor2.1LZ00}
For any $\psi \in \PT{2n}$, we have 
\begin{multline}
\iota_1\big(\psi(t); t \in [0,T]\big)+n   = 
\iCLM\big( \Delta, \Graph\psi(t); t \in [0,T]\big)\\
\iota_\omega\big(\psi(t); t \in [0,T]\big)  =\iCLM\big(\Delta_\omega,\Graph\psi(t); t \in [0,T]\big), \qquad \omega \in \mathbb U \setminus\{1\}
\end{multline}
where $\Delta_\omega \= \Graph(\omega \Id)$.
\end{lem}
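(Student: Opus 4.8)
Both identities are reformulations of \cite[Corollary~2.1]{LZ00}; here is the line of argument one would run. The idea is to match the two indices crossing by crossing and then globalise by homotopy invariance and concatenation additivity --- properties shared by $\iCLM$ (through \cite[Theorem~3.2]{RS93} and Equation~\eqref{eq:clm-rs}) and, by construction, by Long's index $\iota_\omega$.

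Fix $\omega\in\mathbb U$. A time $t_0\in[0,T]$ is a crossing of the Lagrangian path $t\mapsto\Graph\psi(t)$ with $\Delta_\omega=\Graph(\omega\Id)$, \ie $\Graph\psi(t_0)\cap\Delta_\omega\neq\{0\}$, exactly when $\ker_\C\big(\psi(t_0)-\omega\Id\big)\neq\{0\}$, that is when $\psi(t_0)\in\Sp_\omega^0(2n,\R)$; moreover $\dim\big(\Graph\psi(t_0)\cap\Delta_\omega\big)=\nu_\omega\big(\psi(t_0)\big)$. Hence the instants contributing to $\iCLM\big(\Delta_\omega,\Graph\psi(t)\big)$ are precisely the instants at which $\psi$ meets the hypersurface $\Sp_\omega^0(2n,\R)$, which are the ones contributing to $\iota_\omega(\psi)$. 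At such a $t_0$ one computes, via the Robbin--Salamon crossing form formula, the crossing form of $\Graph\psi$ relative to $\Delta_\omega$ on $\ker_\C(\psi(t_0)-\omega\Id)$: up to a fixed overall sign and the Hermitian normalisation it is a definite multiple of the form measuring the transversality of $\psi$ to $\Sp_\omega^0(2n,\R)$ at $t_0$ (concretely it is built from $\Real\langle -J\,\psi'(t_0)\psi(t_0)^{-1}v,v\rangle$). Since the positive co-orientation of $\Sp_\omega^0(2n,\R)$ at $M=\psi(t_0)$ is by definition the direction $\tfrac{d}{dt}\big|_{t=0}Me^{tJ}$, and the signed local intersection multiplicity of $\psi$ with $\Sp_\omega^0(2n,\R)$ at $t_0$ read off from this co-orientation is governed by the very same Hermitian form (up to that fixed normalisation), the signed count defining $\iota_\omega$ agrees, crossing by crossing, with the signature count of crossing forms defining $\iCLM\big(\Delta_\omega,\Graph\psi\big)$.

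For $\omega\neq1$ this is enough: the normalising path $\xi_n$ never meets $\Sp_\omega^0(2n,\R)$, since its eigenvalues are the reals $2-t/T\in[1,2]$ and $(2-t/T)^{-1}\in[1/2,1]$, none equal to $\omega$; the prescribed small perturbation $e^{-\eps J}$ does not affect the count, its eigenvalues near $\Id$ avoiding $\omega$; and $t=0$ is not a crossing, because $\psi(0)=\Id$ has only the eigenvalue $1\neq\omega$. Combining this with homotopy invariance and concatenation additivity yields $\iota_\omega(\psi)=\iCLM\big(\Delta_\omega,\Graph\psi(t);t\in[0,T]\big)$. For $\omega=1$ the extra summand $n$ appears because now $\Graph\psi(0)=\Graph\Id=\Delta=\Delta_1$, so $t=0$ is automatically a crossing, of full dimension $n$: the CLM endpoint convention retains this initial contribution, whereas Long's normalisation of $\iota_1$ (built with the reference path $\xi_n$ and the rotation $e^{-\eps J}$) discards it, so the two differ by exactly $n$. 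Checking the identity on a single reference path, where both sides vanish after the shift, and then invoking homotopy invariance and concatenation additivity propagates it to every $\psi\in\PT{2n}$. The delicate point is the comparison in the second paragraph: fixing all the signs and the real-versus-Hermitian normalisation when matching the $\Graph\psi$ crossing form against the co-orientation of $\Sp_\omega^0(2n,\R)$, and handling degenerate (higher multiplicity) crossings, where one either perturbs $\psi$ within its homotopy class or invokes the crossing apparatus of \cite{RS93} in full generality.
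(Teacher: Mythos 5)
Your proposal is correct and follows essentially the same route as the paper, whose entire proof is the citation to \cite[Corollary 2.1]{LZ00} that you also invoke; your crossing-by-crossing sketch (matching intersections of $\Graph\psi$ with $\Delta_\omega$ against intersections of $\psi$ with $\Sp_\omega^0(2n,\R)$, then normalising via $\xi_n$ and $e^{-\eps J}$) is a fair outline of how that corollary is established. Only your heuristic for the shift $+n$ at $\omega=1$ is imprecise: the initial intersection $\Graph\psi(0)\cap\Delta=\Delta$ has dimension $2n$, not $n$, and the CLM endpoint contribution there is the coindex of the crossing form, which need not equal $n$ --- the constant $n$ is really the normalisation built into Long's definition of $\iota_1$ through the reference path and the rotation, which your fallback strategy (verify on a single reference path, then propagate by homotopy invariance and concatenation) correctly handles.
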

\begin{proof}
For the proof of this result, we refer the interested reader to \cite[Corollary 2.1]{LZ00}. 
\end{proof}
Given   $L \in \Lagr(V,\omega)$ and $\psi \in \PT{2n}$, we define the continuous curve $\ell^m: [0,m T  ]\to \Lagr(n)$  as 
\[
\ell^m(t) \= \psi^m(t)L.
\]

By the affine scale invariance of the Maslov index, for any given  $L \in \Lagr(n)$, we get
\[
\iCLM\big(L, \psi_{\kappa+1}(t)L;t \in [\kappa  T  , (\kappa+1)  T  ]\big)=  
\iCLM\big(L,\psi(t) P^\kappa L;t \in [0,T ]\big),\quad \kappa \in\{0, \dots,m-1\}.
\]
By taking into account the additivity property of the Maslov index under concatenations of paths and Lemma \ref{thm:maslov-pairs}, we infer
\begin{multline}\label{eq:riduzioneiterata}
\iCLM(L,\ell^m(t); t \in [0, mT])\\
 = \sum_{\kappa=0}^{m-1}
\iCLM(L,\psi_{\kappa+1}(t)L;t \in [\kappa T,(\kappa+1)T])= 
   \sum_{\kappa=0}^{m-1}
\iCLM(L,\psi (t) P^\kappa L;t \in [0,T ])\\= \sum_{\kappa=0}^{m-1} 
\iCLM \big( P^\kappa L\oplus L, \Graph \psi(t); t \in [0,T]\big).
 \end{multline}
 In particular, if $L$ is $P$-invariant (namely $PL \subseteq L$), then we have 
 \begin{multline}\label{eq:riduzioneiterata-2}
 \iL(\psi(t), t \in [0,mT])\\=
 \iCLM(L,\ell^m(t); t \in [0, mT]) =  
 \sum_{\kappa=0}^{m-1} \iCLM\big(L\oplus L,\Graph \psi(t); t \in [0,T]\big) \\ 
  = m\, \iCLM \big(L\oplus L,\Graph \psi(t); t \in [0,T]\big) =  m\,\iL\big( \psi(t); t \in [0,T]\big).
 \end{multline}
\begin{prop}\label{thm:stima-conley-iterate}
Let $\psi \in \PT{2n}$ and $m \in \N$. Then 
\[
	k_1 -k_m\geq   \iCZ(\psi^m(t); t \in [0, mT])-m\,\iCZ(\psi(t); t \in [0, T])\geq - (m-1)\,\cdot (2n-k_1)
	\]
where $k_i =\dim (\Graph P^i\cap \Delta)$.
\end{prop}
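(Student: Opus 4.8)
The plan is to express $\iCZ(\psi^m)-m\,\iCZ(\psi)$ as a sum of single triple indices via the Bott--Long iteration formula, and then to sandwich each of them between the nullity of a crossing form (from below) and the dimension of a symplectic reduction (from above).

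First I would pass to the $\omega$-index. By Lemma~\ref{thm:cor2.1LZ00}, applied to $\psi$ and (after rescaling the parameter) to $\psi^m\in\PT{2n}$, one has $\iCZ(\psi^\ell)=\iota_1(\psi^\ell)+n$ for $\ell=1,m$, so that $\iCZ(\psi^m)-m\,\iCZ(\psi)=\iota_1(\psi^m)-m\,\iota_1(\psi)-(m-1)n$. Applying the Bott-type formula \eqref{eq:Bott-type} with $z=1$ gives $\iota_1(\psi^m)=\sum_{j=0}^{m-1}\iota_{\omega_j}(\psi)$ and $\nu_1(\psi^m)=\sum_{j=0}^{m-1}\nu_{\omega_j}(\psi)$, with $\omega_j\=e^{2\pi i j/m}$ and $\omega_0=1$. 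Isolating the $j=0$ term and using $\nu_1(\psi^m)=k_m$, $\nu_1(\psi)=k_1$ (the eigenvalue $1$ is real, so its eigenspace has the same dimension over $\R$ and $\C$) one obtains
\[
\iCZ(\psi^m)-m\,\iCZ(\psi)=\sum_{j=1}^{m-1}\left(\iota_{\omega_j}(\psi)-\iota_1(\psi)-n\right),\qquad \sum_{j=1}^{m-1}\nu_{\omega_j}(\psi)=k_m-k_1 .
\]

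Next I would estimate one summand, for $\omega\=\omega_j\neq1$. By Lemma~\ref{thm:cor2.1LZ00} and Definition~\ref{def:CZ} it equals $\iCLM(\Delta_\omega,\Graph\psi(t);t\in[0,T])-\iCLM(\Delta,\Graph\psi(t);t\in[0,T])$, which by the H\"ormander-index identity \eqref{eq:utile} (with $\lambda=\Graph\psi$, hence $\lambda(0)=\Delta$, $\lambda(T)=\Graph P$, $\mu_1=\Delta$, $\mu_2=\Delta_\omega$) equals $\itriple(\Delta,\Delta,\Delta_\omega)-\itriple(\Graph P,\Delta,\Delta_\omega)=-\,\itriple(\Graph P,\Delta,\Delta_\omega)$, the first triple index being $0$ since its reduction is taken modulo the Lagrangian $\Delta$. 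Now I would compute the relevant intersections in the (complexified) double space: since $\omega\neq1$ one has $\Delta\cap\Delta_\omega=\{0\}$ and a fortiori $\Graph P\cap\Delta\cap\Delta_\omega=\{0\}$, while $\Graph P\cap\Delta_\omega\cong\ker(P-\omega\,\Id)$ has dimension $\nu_\omega(\psi)$ and $\Graph P\cap\Delta$ has dimension $k_1$; therefore the isotropic subspace defining $\itriple(\Graph P,\Delta,\Delta_\omega)$ is $\Graph P\cap\Delta+\Delta\cap\Delta_\omega=\Graph P\cap\Delta$, of dimension $k_1$, so $\itriple(\Graph P,\Delta,\Delta_\omega)$ is the extended coindex of a quadratic form on a space of dimension $2n-k_1$, whose nullity is at least $\dim(\Graph P\cap\Delta_\omega)=\nu_\omega(\psi)$. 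By the properties of the triple index recalled in Section~\ref{sec:Maslov} this yields $\nu_\omega(\psi)\le\itriple(\Graph P,\Delta,\Delta_\omega)\le 2n-k_1$. Summing over $j=1,\dots,m-1$ and using the identities above gives
\[
-(m-1)(2n-k_1)\ \le\ \iCZ(\psi^m)-m\,\iCZ(\psi)=-\sum_{j=1}^{m-1}\itriple(\Graph P,\Delta,\Delta_{\omega_j})\ \le\ -(k_m-k_1)=k_1-k_m ,
\]
which is the assertion.

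The left-hand bound is routine — merely the dimension count $\itriple\le 2n-k_1$. The content lies in the right-hand bound: the naive estimate $\itriple\ge0$ gives only $\iCZ(\psi^m)-m\,\iCZ(\psi)\le0$, and obtaining the sharp constant $k_1-k_m$ requires the inequality $\itriple(\Graph P,\Delta,\Delta_\omega)\ge\nu_\omega(\psi)$, \ie keeping track of the nullity contribution to the triple index rather than just its nonnegativity. One may also bypass the Bott formula: splitting $[0,mT]$ at the instants $\kappa T$ and using the additivity and affine invariance of $\iCLM$ together with $\Graph(\psi(s)P^\kappa)=(\Id\oplus P^{-\kappa})\Graph\psi(s)$ yields $\iCZ(\psi^m)=\sum_{\kappa=0}^{m-1}\iCLM((\Id\oplus P^\kappa)\Delta,\Graph\psi(s);s\in[0,T])$, and then the identical triple-index estimates apply with $\Delta_\omega$ replaced by $(\Id\oplus P^\kappa)\Delta$ and $\nu_\omega(\psi)$ by $\dim\ker(P^{\kappa+1}-\Id)=k_{\kappa+1}$.
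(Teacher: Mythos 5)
Your argument is correct and is essentially the paper's own proof: both reduce $\iCZ(\psi^m)-m\,\iCZ(\psi)$ via the Bott--Long formula and Lemma \ref{thm:cor2.1LZ00} to $-\sum_{\omega^m=1,\,\omega\neq1}\itriple(\Graph P,\Delta,\Delta_\omega)$, bound each triple index above by $2n-k_1$ through the symplectic reduction modulo $\Graph P\cap\Delta$, and below by $\nu_\omega(\psi)$ via the nullity term in \eqref{eq:triple}, summing the latter to $k_m-k_1$. The only cosmetic differences are that you isolate the $j=0$ root explicitly and invoke Proposition \ref{thm:mainli} with $\itriple(\Delta,\Delta,\Delta_\omega)=0$ where the paper cites Lemma \ref{thm:properties}(I) directly.
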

\begin{proof}
By invoking the Bott type  iteration formula given in Equation  \eqref{eq:Bott-type}, Definition \ref{def:CZ} and Lemma \ref{thm:cor2.1LZ00}, we get
\[
 \iCZ(\psi^m(t); t \in [0, mT])= \iCZ(\psi(t);t\in [0, T])+ \sum_{\substack{\omega^m=1\\ \omega \neq 1}}\iCLM\big( \Delta_\omega,\Graph\psi(t); t \in [0, T]\big).
\]
For every $\omega \in \mathbb U$, using Lemma \ref{thm:properties}, we have 
\begin{multline}
	\iCLM\big(\Delta_\omega,\Graph \psi(t);t \in [0, T]\big)-\iCLM\big(\Delta, \Graph \psi(t); t \in [0, T]\big)= s(\Delta, \Graph P;\Delta,\Delta_\omega)\\
	=-\itriple(\Graph P,\Delta, \Delta_\omega).
	\end{multline}
Summing up, we finally get 
\begin{equation}\label{eq:buona}
	\iCZ(\psi^m(t); t \in [0, mT])= m \, \iCZ(\psi(t);t \in [0, T]) - \sum_{\substack{\omega^m=1\\ 
			\omega \neq 1}}\itriple\big(\Graph P, \Delta, \Delta_\omega\big). 
	\end{equation}
Now, for every root of unit $\omega_i$, by using analogous arguments as given in the proof of Theorem~\ref{thm:main-1}, we get that the triple index  $\itriple\big(\Graph P, \Delta, \Delta_{\omega_i}\big)$  is equal to the extended coindex of a quadratic form on a $(2n-\dim \epsilon_i)$-dimensional vector space where 
$\epsilon_i\= \Delta\cap \Delta_{\omega_i} + \Delta \cap \Graph P=\Delta \cap \Graph P$. Set $k_1=(\dim \Delta\cap \Graph P)$ , then we get that  
\begin{equation}\label{eq:dis-ultima}
\iCZ(\psi^m(t); t \in [0, mT])-m\,\iCZ(\psi(t); t \in [0, T])\geq -(m-1)( 2n- k_1). 
\end{equation}
Furthermore, use \eqref{eq:triple}, we have $\itriple\big(\Graph P, \Delta, \Delta_\omega\big)\ge \dim (\Delta_{\omega }\cap \Graph (P))$.
It follows that 
\[
\sum_{\substack{\omega^m=1\\ 
		\omega \neq 1}}\itriple\big(\Graph P, \Delta, \Delta_\omega\big)\geq \dim\ker (P^m-\Id)-\dim\ker (P-\Id)
\]
This concludes the proof.
\end{proof}
\begin{rem}
For an analogous  estimate, we refer  the interested reader to \cite[Corollary 3.7, Equation  (12)]{DDP08}. We remark that the  estimate provided in Proposition~\ref{thm:stima-conley-iterate} coincides with the one proved by authors in   \cite[Equation (19), Theorem 3, pag.213]{Lon02} with completely different methods once observed that $\iCZ(\psi(y), t \in [0,T]=i_1(\psi)+n$ where $i_1$ is the index appearing in the aforementioned book of Long. 
\end{rem}
%


\section{Optical Hamiltonian and Lagrangian plus curves}\label{subsec:optical}

This section is devoted to discuss a {\em monotonicity property of the crossing forms\/} for a path of Lagrangian subspaces with respect to a distinguished  Lagrangian subspace $L_0$; such a property  is usually termed {\em $L_0$-positive\/} (respectively $L_0$-negative) or {\em $L_0$-plus\/} (respectively {\em $L_0$-minus\/}) property. We start with the following definition. 
\begin{defn}\label{def:plus-curve}
Let $L_0\in \Lagr(V,\omega)$. A curve $\ell:[a,b] \to \Lagr(V,\omega)$ is termed a {\em $L_0$-plus curve\/} or {\em $L_0$-positive curve\/}  if, at each crossing instant $t_0\in [a,b]$, the crossing form $\Gamma(\ell(t), L_0, t_0)$ is  positive definite. 

If $\ell$ is a $L_0$-plus and if  $t_0 \in [a,b]$ is a crossing instant,  we define the {\em multiplicity\/} of the crossing instant $t_0$, the positive integer
 \[
 \mul(t_0)\=\dim\big(\ell(t_0)\cap L_0\big).
\]
 \end{defn}
 \begin{rem}
 We observe that an analogous definition holds for {\em $L_0$-minus curves\/} just by replacing plus by  minus. 	
 \end{rem}

 \begin{rem}
We stress on the fact that the {\em plus condition\/} strongly depends on the train $\Sigma(L_0)$. In fact, as we shall see later,   a curve of Lagrangian subspaces could be a plus curve with respect to a train but not with respect to another (or even worse with respect to any other). 
\end{rem}
Thus for $L_0$-plus curves we get the following result.
\begin{lem}\label{thm:maslov-positive}
Let  $\ell \in \mathscr C^1\big([a,b], \Lagr(V,\omega))$ be a {\em $L_0$-plus curve\/}.  Then we have: 
\begin{equation}\label{eq:Maslov-plus}
  \iCLM\big(L_0,\ell(t); t \in [a,b]\big)=\mul(a)+\sum_{\substack{t_0 \in \ell^{-1}\Sigma(L_0)\\ t_0 \in ]a,b[}}\mul(t_0).
\end{equation}
\end{lem}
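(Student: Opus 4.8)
The plan is to reduce $\iCLM\big(L_0,\ell(t);t\in[a,b]\big)$ to a sum of signatures of crossing forms along $\ell$, and then to exploit the plus hypothesis to rewrite each of those signatures as a multiplicity.

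First I would record that, under the standing hypothesis, every crossing of $\ell$ with the train $\Sigma(L_0)$ is regular. Indeed, by Definition~\ref{def:plus-curve} the crossing form $\Gamma(\ell(t),L_0,t_0)$ is positive definite at each crossing instant $t_0\in[a,b]$, hence in particular nondegenerate; nondegeneracy of the crossing form is precisely the definition of a regular crossing. Regular crossings are isolated, so by compactness of $[a,b]$ the set $\ell^{-1}\Sigma(L_0)\cap[a,b]$ is finite, and the right-hand side of \eqref{eq:Maslov-plus} is a well-defined finite sum (with the convention that $\mul(a)=0$ when $a\notin\ell^{-1}\Sigma(L_0)$).

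Next I would invoke the crossing-form description of the $\iCLM$-index of a $\mathscr C^1$-path with only regular crossings, as recalled in Section~\ref{sec:Maslov} (see also \cite{RS93,ZWZ18}): in the sign conventions adopted here one has
\[
\iCLM\big(L_0,\ell(t);t\in[a,b]\big)=\coindex \Gamma(\ell(t),L_0,a)+\sum_{\substack{t_0\in\ell^{-1}\Sigma(L_0)\\ t_0\in\,]a,b[}}\sgn\Gamma(\ell(t),L_0,t_0)-\iindex \Gamma(\ell(t),L_0,b),
\]
where the endpoint terms at $a$ and $b$ are to be read as $0$ when the corresponding endpoint is not a crossing instant. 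Now, since $\ell$ is $L_0$-plus, at every crossing instant $t_0$ the quadratic form $\Gamma(\ell(t),L_0,t_0)$ is positive definite on the $\mul(t_0)$-dimensional space $\ell(t_0)\cap L_0$, whence $\iindex \Gamma(\ell(t),L_0,t_0)=\nullity \Gamma(\ell(t),L_0,t_0)=0$ and $\coindex \Gamma(\ell(t),L_0,t_0)=\sgn\Gamma(\ell(t),L_0,t_0)=\mul(t_0)$. Substituting into the displayed identity, the term at $b$ drops out, the term at $a$ equals $\mul(a)$, and each interior term equals $\mul(t_0)$; this is exactly \eqref{eq:Maslov-plus}.

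The step I expect to demand the most care is the second one: the crossing-form formula for the $\iCLM$-index must be used with the endpoint conventions that match the definition recalled in Section~\ref{sec:Maslov}, namely the one in which the initial endpoint contributes the \emph{coindex} of its crossing form while the final endpoint contributes \emph{minus the index}. With the opposite choice of conventions a $\mul(b)$ term would appear instead, changing the statement, so this is where one has to be precise. Everything else is routine linear algebra together with the standard isolatedness of regular crossings.
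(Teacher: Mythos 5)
Your proof is correct and follows essentially the same route as the paper's: observe that the plus condition makes every crossing regular (hence isolated and finite in number), then apply the crossing-form formula \eqref{eq:iclm-crossings} and use positive definiteness to identify the coindex/signature at each crossing with $\dim\big(\ell(t_0)\cap L_0\big)$. Your explicit attention to the endpoint conventions (coindex at $a$, minus index at $b$, so that the $b$-term vanishes) is a point the paper's proof leaves implicit, but the argument is the same.
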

\begin{proof}
We observe that if $\ell$ is a $L_0$-plus curve then 
\[
\sgn\Gamma(\ell, L_0, t_0)=\coindex \Gamma(\ell, L_0, t_0)= \dim\big(\ell(t_0)\cap L_0\big).
\]
Since $\ell$ is a plus curve, each crossing instant is non-degenerate and in particular isolated. So, on a compact interval are in finite number. 
We conclude the proof using Equation \eqref{eq:iclm-crossings}.  
\end{proof}
In this paragraph we provide sufficient conditions on the Hamiltonian function in order  the lifted Hamiltonian flow at the Lagrangian Grassmannian level is a plus curve with respect to a distinguished Lagrangian subspace.  

On the  symplectic space  $(\R^{2n}, \omega_0)$, let  $H:[0,T] \times \R^{2n} \to\R$  be  a (smooth) Hamiltonian and let us consider the first order  Hamiltonian system  given by 
\begin{equation}\label{eq:Ham-sys} 
z'(t)= J_0 \nabla H\big(t, z(t)\big), \qquad t \in [0,T],
\end{equation}
($\omega_0$ and $J_0$ have been introduced at page \pageref{eq:standard-form}).
By linearizing  Equation   \eqref{eq:Ham-sys} along a solution $z_0$, we get the system
\begin{equation}\label{eq:-linHam-sys} 
w'(t)= J_0 B(t) w(t),	\qquad t \in [0,T]
\end{equation}
where 
\begin{equation}\label{eq:hessian-ham}
B(t) := D^2H\big(t, z_0(t)\big) =\begin{bmatrix}
 H_{pp}(t)	& H_{pq}(t)\\
 H_{qp}(t)& H_{qq}(t)
 \end{bmatrix}
\end{equation}
We denote by $\psi$  the fundamental solution of the Hamiltonian system given at Equation \eqref{eq:-linHam-sys}. 
\begin{rem}
 We observe that if $H$ is quadratic and $t$-independent, the  linear Hamiltonian vector field in Equation \eqref{eq:-linHam-sys} is $t$-independent, i.e. $B(t)=B$. In this particular case,  we get $\psi(t)= \exp(tJ_0 S)$. 
\end{rem}
\begin{defn}\label{def:optical-lagrangian}
Let  $L_0, L \in \Lagr(n)$ and let $\ell : [0,T]\to  \Lagr(n)$ be defined by $\ell(t)\= \psi(t)\,L$. The Hamiltonian $H$ is termed {\em $L_0$-optical\/} or {\em $L_0$-positively twisted\/} if the curve  $t \mapsto\ell(t)$ is a $L_0$-plus curve.
\end{defn} 
Some important special classes of $L_0$-optical Hamiltonians where $L_0$ is the Dirichlet (resp. Neumann) Lagrangian is represented by Hamiltonian having some convexity  properties with respect to the momentum (resp. configuration) variables. 

\begin{prop}\label{thm:buona}
Let $H:\R^{2n}\to\R$ be a $\mathscr C^2$-convex Hamiltonian and let $z_0$ be a solution of the Hamiltonian system given in Equation  \eqref{eq:Ham-sys}. Then we get that $H$ with respect to the 
\begin{enumerate}
\item momentum variables is $L_D$-optical
\item configuration variables is $L_N$-optical.
\end{enumerate}
\end{prop}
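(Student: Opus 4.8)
The plan is to reduce everything to the definition of $L_0$-optical via the sign of the crossing form and then to compute that crossing form explicitly at an arbitrary crossing instant. Recall from Definition \ref{def:optical-lagrangian} that we must show the curve $\ell(t)=\psi(t)L$, with $\psi$ the fundamental solution of the linearized system \eqref{eq:-linHam-sys}, is an $L_D$-plus curve in case (1) and an $L_N$-plus curve in case (2), where $L_D$ (resp.\ $L_N$) is the Dirichlet Lagrangian $(0)\oplus \R^n$ of vanishing configuration variables (resp.\ the Neumann Lagrangian $\R^n \oplus (0)$). Since $H$ is $\mathscr C^2$-convex, the Hessian $B(t)=D^2H(t,z_0(t))$ in \eqref{eq:hessian-ham} is positive definite for every $t$; this is the only input we will use, so the two cases are perfectly symmetric under the interchange of $p$ and $q$ (equivalently, conjugation by $J_0$), and it suffices to treat case (1) carefully.

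First I would recall the standard formula for the crossing form of a curve of the form $\ell(t)=\psi(t)L$ at a crossing $t_0$ (i.e.\ at an instant where $\ell(t_0)\cap L_0\neq (0)$): for $v\in \ell(t_0)\cap L_0$, writing $v=\psi(t_0)w$ with $w\in L$, one has
\[
\Gamma(\ell,L_0,t_0)[v,v]=\omega_0\big(v,\dot\ell(t_0)v\big)=\omega_0\big(\psi(t_0)w, J_0 B(t_0)\psi(t_0)w\big)=\big\langle B(t_0)\,v,\,v\big\rangle,
\]
using $\omega_0(z_1,z_2)=\langle J_0 z_1,z_2\rangle$, $J_0^{\mathsf T}J_0=\Id$, and $\dot\psi(t_0)=J_0B(t_0)\psi(t_0)$. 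This is exactly the computation underlying the Maslov/crossing-form machinery recalled in Section \ref{sec:Maslov} (cf.\ the references \cite{RS93,ZWZ18}); I would cite it rather than reprove it. Thus the crossing form at $t_0$ is simply the restriction of the symmetric bilinear form $\langle B(t_0)\cdot,\cdot\rangle$ to the subspace $\ell(t_0)\cap L_0\subset \R^{2n}$.

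Now, for case (1), the hypothesis is that $H$ is convex in the momentum variables, i.e.\ $H_{pp}(t)=\partial_{pp}H(t,z_0(t))$ is positive definite; more precisely, under the full $\mathscr C^2$-convexity hypothesis $B(t_0)$ is positive definite, so certainly its restriction $B(t_0)|_{\ell(t_0)\cap L_0}$ is positive definite, hence the crossing form $\Gamma(\ell,L_D,t_0)$ is positive definite at every crossing. By Definition \ref{def:plus-curve} this says precisely that $\ell$ is an $L_D$-plus curve, i.e.\ $H$ is $L_D$-optical, which is claim (1). For case (2) one runs the identical argument with $L_0=L_N$; since $B(t_0)>0$, its restriction to $\ell(t_0)\cap L_N$ is again positive definite, giving that $\ell$ is an $L_N$-plus curve and $H$ is $L_N$-optical. (If one wishes to use only the partial convexity hypotheses $H_{pp}>0$, resp.\ $H_{qq}>0$, one observes that any $v\in \ell(t_0)\cap L_D$ has vanishing $q$-component, so $\langle B(t_0)v,v\rangle=\langle H_{pp}(t_0)p_v,p_v\rangle>0$ for $v\neq 0$; symmetrically for $L_N$ using $H_{qq}$.)

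The only genuine subtlety — and the step I would be most careful about — is the very first one: making sure the crossing form of $\ell(t)=\psi(t)L$ with respect to $L_0$ is indeed represented by $\langle B(t_0)\cdot,\cdot\rangle$ on $\ell(t_0)\cap L_0$ independently of the auxiliary choices (complement, Lagrangian frame) entering the definition of the crossing form, and that this holds even at \emph{degenerate} or non-regular crossings. This is standard (it is the content of the Robbin–Salamon crossing-form calculus and is exactly how optical Hamiltonians are handled in \cite{Arn86} and in the third author's \cite{MPP07}), so in the write-up I would simply invoke it with a precise reference to Section \ref{sec:Maslov}; everything else is the trivial observation that a positive-definite symmetric form restricts to a positive-definite form on any subspace.
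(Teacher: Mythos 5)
Your proof is correct and follows essentially the same route as the paper's: the paper likewise reduces everything to the identity $\Gamma\big(\ell(t),L_0,t_0\big)[w]=\langle B(t_0)w,w\rangle$ for $w\in\ell(t_0)\cap L_0$ (its Equation \eqref{eq:plus-curve-control}) and then observes that for $w\in\ell(t_0)\cap L_D$ this reduces to $\langle H_{pp}(t_0)y,y\rangle$, so that partial convexity in the momentum variables already suffices — which is exactly your parenthetical refinement, and is in fact the hypothesis the paper actually uses. The only blemish is notational: you write $L_D=(0)\oplus\R^n$, which is swapped relative to the paper's convention $L_D=\R^n\times\{0\}$ in $(p,q)$-coordinates, but your verbal description (vanishing configuration variables) and the ensuing computation agree with the paper, so nothing is affected.
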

\begin{proof}
We prove only the first statement, being the second completely analogous. Given $L \in \Lagr(n)$, let us consider the Lagrangian curve pointwise defined by  $\ell(t)\= \psi(t)L$. Let $t_0$ be a crossing instant for $\ell$ with respect to the Dirichlet Lagrangian $L_D$. By using Equation \eqref{eq:plus-curve-control} and Equation \eqref{eq:hessian-ham}, we get that
\begin{equation}\label{eq:Gamma-L_D}
	\Gamma\big(\ell(t), L_D, t_0 \big)[w]=	\langle B(t_0) w,w \rangle= \langle H_{pp}(t_0) y, y\rangle, \qquad \forall\, w=\begin{bmatrix}y\\0\end{bmatrix} \in\ell(t_0)\cap L_D.
	\end{equation}
Since $H$ is $\mathscr C^2$ convex in the $p$-variables, it follows that the crossing form $\Gamma$ given in Equation \eqref{eq:Gamma-L_D} is positive definite. The conclusion now follows by the arbitrarily  of $t_0$. 
\end{proof}
\begin{cor}\label{thm:plus-curve-schur}
Let $H:\R^{2n}\to\R$ be a $\mathscr C^2$-strictly convex Hamiltonian function and let $z_0$ be a solution of the Hamiltonian system given in Equation  \eqref{eq:Ham-sys}.  Then $H$ is $L_0$-optical with respect to every $ L_0\in \Lagr(n)$. 
\end{cor}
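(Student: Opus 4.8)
The plan is to reduce the statement to a single observation that is already implicit in the proof of Proposition~\ref{thm:buona}. For the Lagrangian curve $\ell(t)\=\psi(t)L$ generated by the linearized flow~\eqref{eq:-linHam-sys}, and for an arbitrary $L_0\in\Lagr(n)$, the crossing form at a crossing instant $t_0$ is nothing but the restriction to $\ell(t_0)\cap L_0$ of the fixed quadratic form $w\mapsto\langle B(t_0)w,w\rangle$; explicitly, by~\eqref{eq:plus-curve-control} and~\eqref{eq:hessian-ham},
\[
\Gamma\big(\ell(t),L_0,t_0\big)[w]=\langle B(t_0)w,w\rangle,\qquad w\in\ell(t_0)\cap L_0,\quad B(t_0)=D^2H\big(t_0,z_0(t_0)\big).
\]
Only the subspace on which this quadratic form is evaluated depends on $L_0$, not the form itself. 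In Proposition~\ref{thm:buona} the identity was used with $L_0=L_D$, where a vector $w\in\ell(t_0)\cap L_D$ has the form $(y,0)$ and hence $\langle B(t_0)w,w\rangle=\langle H_{pp}(t_0)y,y\rangle$; for a general $L_0$ no such block simplification is available, but none is needed.

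Next, the hypothesis that $H$ is $\mathscr C^2$-strictly convex means exactly that the full Hessian $B(t)=D^2H\big(t,z_0(t)\big)$ is positive definite for every $t\in[0,T]$. Fix $L_0\in\Lagr(n)$ and an arbitrary $L\in\Lagr(n)$, set $\ell(t)\=\psi(t)L$, and let $t_0$ be a crossing instant of $\ell$ with respect to $L_0$. By the displayed identity, $\Gamma\big(\ell(t),L_0,t_0\big)[w]=\langle B(t_0)w,w\rangle>0$ for every nonzero $w\in\ell(t_0)\cap L_0$, so the crossing form is positive definite. By Definition~\ref{def:plus-curve} the curve $\ell$ is an $L_0$-plus curve, and since $L_0$ was arbitrary, Definition~\ref{def:optical-lagrangian} yields that $H$ is $L_0$-optical with respect to every $L_0\in\Lagr(n)$.

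There is essentially no obstacle here: the only point is to notice that positive definiteness of the \emph{entire} Hessian $B(t)$ --- rather than of the single block $H_{pp}(t)$ used for the Dirichlet case in Proposition~\ref{thm:buona} --- is precisely the uniform condition guaranteeing that the crossing form is positive on $\ell(t_0)\cap L_0$ for every choice of $L_0$ simultaneously. Alternatively one can argue by reduction: given $L_0$, pick $T\in\Sp(2n)$ with $TL_0=L_D$; conjugating~\eqref{eq:-linHam-sys} by $T$ replaces $B(t)$ by the congruent matrix $\traspinv{T}\,B(t)\,T^{-1}$, which is again positive definite, so strict convexity is preserved and Proposition~\ref{thm:buona}(1) applies directly. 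Either route is immediate.
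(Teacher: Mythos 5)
Your proposal is correct and follows essentially the same route as the paper: the paper's proof likewise observes that strict convexity makes the full Hessian $B(t)=D^2H\big(t,z_0(t)\big)$ positive definite, hence every restriction of $\langle B(t_0)\cdot,\cdot\rangle$ --- in particular the crossing form given by Equation~\eqref{eq:plus-curve-control} on $\ell(t_0)\cap L_0$ for an arbitrary $L_0$ --- is positive definite. The additional symplectic-conjugation argument you sketch is not needed but is consistent with this.
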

\begin{proof}
In fact, since $H$ is $\mathscr C^2$-strictly convex, this in particular implies that $B(t)=D^2H\big(t, z_0(t)\big)$ is positive definive and hence every restriction is positive definite. The conclusion now follows directly by using once again  Equation \eqref{eq:plus-curve-control}.
\end{proof}
\begin{rem}
We consider the  Hessian of $H$ along a solution $z_0$ of the Hamiltonian system given in Equation  \eqref{eq:Ham-sys}, given by Equation \eqref{eq:hessian-ham}
and we observe that in terms of the block matrices entering in the Hessian of $H$, the condition for $H$ to be $\mathscr C^2$-strictly convex is equivalent to
 \begin{enumerate}
\item $H_{pp}(t)$ is positive definite (in particular invertible);
\item $ H_{qq}(t)- H_{qp}(t)H_{pp}(t)^{-1}H_{pq}(t) $
is positive definite. 
\end{enumerate}
The equivalence readily follows by the characterization of positive definiteness of a block matrices in terms of the Schur's complement.   Thus, in general, if the Lagrangian $L$ given in Definition~\eqref{def:optical-lagrangian} is  not in a special position with respect to $L_D$ and $L_N$, the opticality property strongly depends upon the all blocks appearing in the Hessian of $H$. 
\end{rem}
We are now in position to prove the Sturm non-oscillation theorem. 
\begin{thm}{\bf [Sturm Non-Oscillation]\/}\label{thm:sturm-nonoscillation}
Let $H: [0,T] \oplus \R^{2n} \to \R$ be a $\mathscr C^2$ Legendre convex natural quadratic Hamiltonian of the form
\begin{equation}\label{eq:quadratic-Hamiltonian}
H(p,q)= \dfrac12\Big[\langle B(t)p,p \rangle+\langle A(t)q,q\rangle\Big],
\end{equation}
where $A,B : [0,T] \to \Sym(n)$ (with $B(t)$ positive definite for every $y \in [0,T]$).
Let $\psi$ be the fundamental solution of the linearized system given in Equation~\eqref{eq:-linHam-sys},  
$L_0 \in \Lambda(n)$, and  $\ell_0(t)\=\psi(t)L_0$. Setting  $\mul(t_0)\= \dim \big(\ell(t_0)\cap L_D\big)$, then we get that 
 \[
 \sum_{t_0 \in [0,T]}\mul(t_0)\leq n
 \]
\end{thm}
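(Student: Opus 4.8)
The plan is to reduce the statement to a disconjugacy estimate for a matrix Sturm--Liouville equation and to settle that by a monotone Wronskian. Throughout I use, as in Corollary~\ref{thm:main-non-oscillation-corollary} and the Introduction, the hypothesis that the potential is concave, i.e.\ $A(t)\preceq 0$ for every $t$ (this is needed for the estimate even though it does not appear in the displayed statement). Since $H_{pp}(t)=B(t)\succ 0$, Proposition~\ref{thm:buona} (applied in the momentum variables) shows that $H$ is $L_D$-optical, so $\ell_0(t)=\psi(t)L_0$ is an $L_D$-plus curve; by the remark in the proof of Lemma~\ref{thm:maslov-positive} its crossings with $L_D$ are non-degenerate, hence isolated, so there are only finitely many instants $t_1<\dots<t_k$ in $[0,T]$ with $\ell_0(t_i)\cap L_D\neq\{0\}$, and the quantity to be bounded is $\sum_{i=1}^k\mul(t_i)$.

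Next I would set up the matrix picture. Fix a rank-$n$ Lagrangian frame $\left(\begin{smallmatrix}X_0\\ Y_0\end{smallmatrix}\right)$ of $L_0$ (so $\trasp{X_0}Y_0=\trasp{Y_0}X_0$), and let $V(t),U(t)$ be the $n\times n$ matrices whose columns are, respectively, the momentum and configuration components of $\psi(t)$ applied to the columns of that frame. Because $D^2H(t)=\diag\!\big(B(t),A(t)\big)$, the Hamiltonian system becomes $U'=B(t)\,V$, $V'=-A(t)\,U$, with $U(0)=Y_0$, $V(0)=X_0$; and $c\mapsto\psi(t_i)\left(\begin{smallmatrix}X_0\\ Y_0\end{smallmatrix}\right)c$ identifies $\ker U(t_i)$ with $\ell_0(t_i)\cap L_D$, whence $\mul(t_i)=\dim\ker U(t_i)$. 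Introduce the Wronskian $W(t)\=\trasp{U(t)}\,V(t)$. Differentiating, $W'(t)=\trasp{V}B(t)V-\trasp{U}A(t)U$, so $W'-\trasp{(W')}=0$ while $W(0)-\trasp{W(0)}=\trasp{Y_0}X_0-\trasp{X_0}Y_0=0$; hence $W(t)\in\Sym(n)$ for all $t$. Since $B(t)\succ0$ and $-A(t)\succeq0$ we get $W'(t)\succeq0$, i.e.\ $t\mapsto W(t)$ is monotone non-decreasing.

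To finish, fix a crossing $t_i$ and set $K_i\=\ker U(t_i)$, so $\dim K_i=\mul(t_i)$. Using symmetry of $W(t_i)$, for $\xi\in K_i$ and any $\eta$ one has $\langle W(t_i)\xi,\eta\rangle=\langle\xi,\trasp{U(t_i)}V(t_i)\eta\rangle=\langle U(t_i)\xi,V(t_i)\eta\rangle=0$, so $K_i\subseteq\ker W(t_i)$; and for $0\neq\xi\in K_i$, $\langle W'(t_i)\xi,\xi\rangle=\langle B(t_i)V(t_i)\xi,V(t_i)\xi\rangle>0$, since $V(t_i)\xi=0$ together with $U(t_i)\xi=0$ would force $\psi(t_i)$ to kill the corresponding vector of $L_0$, hence $\xi=0$; thus $W'(t_i)$ is positive definite on $K_i$. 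Now extend $A,B$ slightly beyond $[0,T]$ keeping $B\succ0$, $A\preceq0$ — the extended curve is still $L_D$-optical, so for a small extension no new crossings appear — and put $\nu(t)\=\iindex\big(\langle W(t)\cdot,\cdot\rangle\big)\in\{0,\dots,n\}$, which is non-increasing by monotonicity of $W$. By first-order eigenvalue perturbation applied to the zero eigenvalue of $W(t_i)$ (on whose eigenspace $W'(t_i)$ is positive semidefinite and positive definite along the $\mul(t_i)$-dimensional subspace $K_i$), at least $\mul(t_i)$ eigenvalues of $W(t)$ move from negative to positive values as $t$ passes $t_i$; that is, $\nu(t_i^-)-\nu(t_i^+)\geq\mul(t_i)$. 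Telescoping (using $\nu$ non-increasing with $0\le\nu\le n$),
\[
\sum_{i=1}^k\mul(t_i)\ \leq\ \sum_{i=1}^k\big(\nu(t_i^-)-\nu(t_i^+)\big)\ \leq\ \nu(t_1^-)-\nu(t_k^+)\ \leq\ n .
\]

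The step I expect to be the main obstacle is the last one — the ``spectral flow'' bookkeeping for the monotone symmetric path $W$: checking that the $\mul(t_i)$ directions in $K_i$ genuinely force the drop $\nu(t_i^-)-\nu(t_i^+)\geq\mul(t_i)$ even when $\ker W(t_i)$ is strictly larger than $K_i$. This is the classical Morse/Sturm eigenvalue-crossing argument for monotone families of symmetric matrices, and it is precisely here that $B(t)\succ0$ and the concavity $A(t)\preceq0$ enter; the two displayed facts about $W(t_i)$ and $W'(t_i)$ are what make it run (equivalently, one may argue through the symmetric Riccati flow $G=VU^{-1}$ and its passages through infinity).
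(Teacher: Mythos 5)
Your argument is correct, but it takes a genuinely different route from the paper's. The paper proceeds variationally and symplectically: it feeds the convexity of the Legendre-transformed Lagrangian into the index theorem (Theorem~\ref{thm:index}) to get $\iCLM\big(L_D,\psi(t)L_D;t\in[0,T]\big)=c(Z)=n$, then uses the H\"ormander/triple-index identities of Proposition~\ref{thm:mainli} together with the bound \eqref{eq:triple} to estimate $\iCLM\big(L_0\oplus L_D,\Graph\psi(t)\big)\leq n-\mul(T)$, and finally converts that Maslov index into the sum of multiplicities via the plus-curve formula of Lemma~\ref{thm:maslov-positive}. You instead run the classical matrix Sturm argument directly on the Wronskian $W=\trasp{U}V$: symmetry, global monotonicity $W'=\trasp{V}BV-\trasp{U}AU\succeq 0$, and the two local facts $K_i\subseteq\ker W(t_i)$ and $W'(t_i)|_{K_i}\succ 0$. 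Your route is elementary and self-contained, makes the roles of $B\succ0$ and $A\preceq0$ completely transparent, and treats both endpoints of $[0,T]$ on an equal footing; the paper's route reuses the index-theoretic machinery that also drives its alternation and comparison theorems. You are also right that $A(t)\preceq 0$ is an unstated but necessary hypothesis: the paper uses it in exactly the same implicit way (the claim that the associated Lagrangian is convex, so the Morse index vanishes), and without it the harmonic oscillator $B=1$, $A=\omega^2$ with $\omega T$ large is a counterexample.

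The step you flag as the main obstacle does close, and global monotonicity is what closes it — no eigenvalue perturbation theory is needed even when $\ker W(t_i)\supsetneq K_i$. Writing $E_{\pm}$ for the positive/negative eigenspaces of $W(t_i)$ and $N=\ker W(t_i)$: for $t>t_i$ one has $W(t)\succeq W(t_i)\succeq 0$ on $E_+\oplus N$, so $\nu(t)\leq\iindex\big(W(t_i)\big)$. For $t<t_i$ close, $W(t)$ is negative definite on $E_-\oplus K_i$: for $v=e+\xi$ with $e\in E_-$, $\xi\in K_i$ one has
\[
\langle W(t)v,v\rangle=\langle W(t_i)e,e\rangle-\int_t^{t_i}\langle W'(\tau)v,v\rangle\,d\tau,
\]
where the first term is $\leq-c\|e\|^2$ and the integrand is $\geq0$, while for $e=0$ the continuity of $W'$ and the positivity of $W'(t_i)$ on $K_i$ make the integral $\geq\delta(t_i-t)\|\xi\|^2>0$ uniformly on the unit sphere of $K_i$. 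Hence $\nu(t_i^-)\geq\iindex\big(W(t_i)\big)+\mul(t_i)\geq\nu(t_i^+)+\mul(t_i)$, and the telescoping with $0\leq\nu\leq n$ finishes the proof. (As a bonus, this argument re-proves that there are at most $n$ crossing instants without invoking the plus-curve property, although you still need Proposition~\ref{thm:buona} or the regularity of the crossings to know a priori that the crossings are isolated so that $\nu(t_i^{\pm})$ makes sense.)
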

\begin{proof}
 Let $x$ be the critical point (with Dirichlet boundary conditions) of the action functional corresponding to the solution $z_0$. Then the Morse index of $x$ is 0, since the (natural) Lagrangian $L$ corresponding to the Hamiltonian $H$ is  $\mathscr C^2$  convex.
In particular by Theorem \ref{thm:index}, we have
\[
\iMas{L_Z}(z_0) = c(Z).
\]
Here $Z= (0) \oplus (0)$, $L_Z=L_D$, and  by taking into account Remark~\ref{rem:comp_c(Z)} we get that  $c(Z)=n$. Then 
\(
\iMas{L_Z}(z_0) = n
\)
and by Definition \ref{def:I_Morse} we have
\begin{equation}\label{eq:ultima}
	\iCLM(L_D \oplus L_D,\Graph(\psi(t));t\in [0,T]) = 
\iCLM(L_D,\psi(t)L_D;t\in [0,T]) = n
\end{equation}
Note that  $\L_D\cap (\psi(0)L_D )=n$ and the Hamiltonian  is $L_D$-optical .
	By lemma \ref{thm:maslov-positive}, we have 
	\begin{equation}\label{eq:nondegenerate}
	L_D\cap (\psi(T)L_D)=\{0\}.
	\end{equation}
From Definition \ref{def:hormander} and Proposition \ref{thm:mainli} we get
\begin{multline}
\iCLM(L_D \oplus L_D,\Graph(\psi(t));t\in [0,T]) - \iCLM(L_0 \oplus L_D, \Graph(\psi(t));t\in[0,T]) \\
= s(\Graph(\Id),\Graph(\psi(T)); L_0\oplus L_D,L_D\oplus L_D)\\
= \iota(\Graph(\Id),L_0 \oplus L_D,L_D \oplus L_D) - \iota(\Graph(\psi(T)),L_0 \oplus L_D,L_D \oplus L_D) .
\end{multline}
By  \cite[ Equation (1.17)]{HWY18}, we have
\[
\iota(\Graph(\Id),L_0\oplus L_D,L_D\oplus L_D)=n-\dim(L_0\cap L_D)+\iota(L_0,L_D,L_D)= n-\dim(L_0\cap L_D)
\]
where the last equality follows by \cite[Corollary 3.14]{ZWZ18}.
By equation \eqref{eq:triple} and \eqref{eq:nondegenerate}, we have
	\begin{multline}\iota(\Graph(\psi(T)),L_0\oplus L_D,L_D\oplus L_D)\le 2n-\dim\left(\psi(T)\cap(L_0\oplus L_D)\right) \\
	-\dim((L_0\oplus L_D) \cap (L_D\oplus L_D))+\dim\left(\Graph(\psi(T))\cap(L_0\oplus L_D)\cap (L_D\oplus L_D)\right)\\
	=2n-\dim((\psi(T)L_0)\cap L_D)- (n+\dim(L_0\cap L_D))+((\psi(t)( L_D\cap L_0))\cap L_D)\\
	=n-\mul(T)-\dim(L_0\cap L_D).
	\end{multline}
   We get
 \begin{multline}
\iCLM(L_D \oplus L_D,\Graph(\psi(t));t\in [0,T]) - \iCLM(L_0 \oplus L_D, \Graph(\psi(t));t\in[0,T]) \\\geq n-\dim(L_0 \cap L_D) - \left(n-\dim(L_0 \cap L_D-\mul(T))\right) =\mul(T).
\end{multline}
By this inequality and by Equation~\eqref{eq:ultima}, we get that 
\begin{multline}
\iCLM(L_0\oplus L_D,\Graph(\psi(t));t\in [0,T])\\
=\iCLM(L_D,\psi(t)L_0;t\in[0,T])\le \iCLM(L_D,\psi(t)L_D;t\in [0,T])-\mul(T) = n-\mul(T).
\end{multline}
The thesis follows by observing that in the case of positive curves, it holds that
\[
\iCLM(L_0\oplus L_D,\Graph(\psi(t));t\in [0,T])= \sum_{t_0 \in [0,T)} \mul(t_0).
\]
\end{proof}
\begin{rem}
It is worth noticing that, in fact 
\begin{multline}
\mul(0)\=\dim (L_0\cap L_D)\le \iCLM(L_0\oplus L_D,\Graph(\psi(t));t\in [0,T])\\=\iCLM(L_D,\psi(t)L_0;t\in[0,T])\le n.
\end{multline}
\end{rem}
Now, since the natural Hamiltonian is $\mathscr C^2$ Legendre convex, as direct consequence of Proposition~\ref{thm:buona}, we get that the curve $t\mapsto \ell_0(t)$ is $L_D$-plus and by using Lemma \ref{thm:maslov-positive}, the local contribution to the $\iCLM$-index is through the multiplicity. This concludes the proof.
\begin{rem}
By using the suggestive original Arnol'd language, the Sturm non-oscillation theorem given in Theorem~\ref{thm:sturm-nonoscillation} could be rephrased by stating that  
\begin{quote}
\underline{Nonoscillation Theorem.} If the potential energy is nonpositive, then the number of moments of verticality  does not exceed the number $n$ of degrees of freedom. 	
\end{quote}
The non-positivity of the potential energy implies that the quadratic Lagrangian is strictly positive and hence  the Morse index of associated Lagrangian action functional vanished identically.
\end{rem}
Let $L \in \Lagr(n)$ and for $i=1,2$, we denote by $\nu(L_i, [0,T]) $ the total sum of all non-transversality instants (counted according their own multiplicities) between the curve $t\mapsto \ell(t)\= \psi(t)L$ and the Lagrangian subspaces $L_i\in \Lagr(n)$ on the interval $[0,T]$.
\begin{thm}{\bf [Sturm Alternation Theorem for plus-curves]\/} \label{thm:alternation-plus}
Under the above notation, the following holds:  
\begin{equation}
\Big\vert\nu(L_2,[0,T])-\nu(L_1,[0,T])\Big\vert \leq n- k
\end{equation}
where $k\= \max\{k_1,k_2\}$ and 
\[
	\begin{split}
	k_1 & \=\min\{\dim \epsilon_1,\dim \epsilon_2\} \textrm{ for } \epsilon_i\=  L \cap \ell(T) /L\cap \ell(T) \cap \L_i, i=1,2 \\
	k_2 & \=\min\{\dim \delta_1,\dim \delta_2\} \textrm{ for } \delta_1\=  L \cap L_1 +L_1 \cap L_2 \textrm{ and }\delta_2\= \ell(T) \cap L_2 + L_1 \cap L_2   .
	\end{split}
	\]
\end{thm}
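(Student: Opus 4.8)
The plan is to convert the geometric count $\nu(L_i,[0,T])$ into a Maslov index by means of the plus-curve hypothesis, and then to estimate the resulting difference of $\iCLM$-indices by the same triple-index bounds that power the proof of Theorem~\ref{thm:main-1}. Throughout I would assume, as the name of the statement suggests and as holds for instance whenever the underlying Hamiltonian is $\mathscr C^2$-strictly convex (Corollary~\ref{thm:plus-curve-schur}), that $\ell(t)\=\psi(t)L$ is simultaneously an $L_1$-plus and an $L_2$-plus curve; in particular all crossings are isolated and $\nu(L_i,[0,T])$ is finite.

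First I would apply Lemma~\ref{thm:maslov-positive} to $\ell$ with distinguished Lagrangian $L_i$. Since for a plus curve the $\iCLM$-index records the left endpoint but not the right one, this yields
\[
\nu(L_i,[0,T])=\iCLM\big(L_i,\ell(t);t\in[0,T]\big)+\dim\big(\ell(T)\cap L_i\big),\qquad i=1,2 .
\]
Subtracting the two identities and rewriting the difference of $\iCLM$-indices through the H\"ormander index (Definition~\ref{def:hormander}, Proposition~\ref{thm:mainli} and Equation~\eqref{eq:utile}, with $\ell(0)=\psi(0)L=L$), I obtain
\[
\nu(L_2,[0,T])-\nu(L_1,[0,T])=s\big(L,\ell(T);L_1,L_2\big)+\dim\big(\ell(T)\cap L_2\big)-\dim\big(\ell(T)\cap L_1\big),
\]
and I would then expand $s\big(L,\ell(T);L_1,L_2\big)$ in the two standard ways as a difference of triple indices, namely $\itriple(L,\ell(T),L_2)-\itriple(L,\ell(T),L_1)$ and $\itriple(L,L_1,L_2)-\itriple(\ell(T),L_1,L_2)$.

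To get the bound $n-k_1$ I would group the first expansion as $B_2-B_1$ with $B_i\=\itriple(L,\ell(T),L_i)+\dim(\ell(T)\cap L_i)$; the two-sided estimate~\eqref{eq:triple} gives $B_i\ge 0$ and $\itriple(L,\ell(T),L_i)\le n-\dim(L\cap\ell(T))-\dim(\ell(T)\cap L_i)+\dim(L\cap\ell(T)\cap L_i)$, hence $0\le B_i\le n-\dim(L\cap\ell(T))+\dim(L\cap\ell(T)\cap L_i)=n-\dim\epsilon_i$, so that $|\nu(L_2,[0,T])-\nu(L_1,[0,T])|\le\max_i(n-\dim\epsilon_i)=n-k_1$. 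To get the bound $n-k_2$ I would group the second expansion as $C_L-C_{\ell(T)}$ with $C_L\=\itriple(L,L_1,L_2)$ and $C_{\ell(T)}\=\itriple(\ell(T),L_1,L_2)+\dim(\ell(T)\cap L_1)-\dim(\ell(T)\cap L_2)$; using~\eqref{eq:triple} again I would check $0\le C_L\le n-\dim\delta_1$ and $0\le C_{\ell(T)}\le n-\dim\delta_2$, which gives $|\nu(L_2,[0,T])-\nu(L_1,[0,T])|\le n-k_2$. Combining the two inequalities yields $n-\max\{k_1,k_2\}=n-k$, as claimed.

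The step I expect to need the most care is the $k_2$-estimate. The triple index $\itriple(\ell(T),L_1,L_2)$ is naturally the extended coindex of a quadratic form on the symplectic reduction modulo $\ell(T)\cap L_1+L_1\cap L_2$, whereas the subspace appearing in the statement is $\delta_2=\ell(T)\cap L_2+L_1\cap L_2$; the point is that the two endpoint nullities $\dim(\ell(T)\cap L_1)$ and $\dim(\ell(T)\cap L_2)$ are exactly what turns the reduced dimension $n-\dim(\ell(T)\cap L_1+L_1\cap L_2)$ into $n-\dim\delta_2$, by the modular identity $\dim(\ell(T)\cap L_1\cap L_2)+\dim\delta_2=\dim(\ell(T)\cap L_2)+\dim(L_1\cap L_2)$. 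This is purely combinatorial dimension-counting, but it is where the precise shape of $\delta_2$ is forced; the only other convention-sensitive point is the orientation choice underlying Lemma~\ref{thm:maslov-positive} that makes the correction term in the first step $+\dim(\ell(T)\cap L_i)$ and not $+\dim(\ell(0)\cap L_i)$.
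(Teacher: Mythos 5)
Your proposal is correct and follows essentially the same route as the paper's proof: both convert $\nu(L_i,[0,T])$ into $\iCLM(L_i,\ell(t);t\in[0,T])+\dim(\ell(T)\cap L_i)$ via the plus-curve property (Lemma~\ref{thm:maslov-positive}), expand the resulting H\"ormander index in the two ways given by Proposition~\ref{thm:mainli}, and bound each grouped term by the two-sided estimate~\eqref{eq:triple} together with the cyclic identity for the triple index (your $C_{\ell(T)}$ is exactly the paper's $\itriple(L_1,L_2,\ell(T))$). The dimension bookkeeping you flag as the delicate point is precisely the step the paper handles with the invariance $\coindex Q(\alpha,\beta,\gamma)=\coindex Q(\beta,\gamma,\alpha)$, so nothing further is needed.
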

\begin{proof}
		The idea of the proof is similar wit h theorem	\ref{thm:main-corollary-alternation}  but it needs more precise estimate.
		Note that $\nu(L_i,[0,T])=\iCLM (L_i,\ell(t);t\in [0,T])+\dim \ell(T)\cap L_i $ since $t\mapsto l(t)$ is $L_i$-plus curve  for $i=1,2$.
		Then we have
		\[
		\nu(L_2,[0,T])-\nu(L_1,[0,T])=s(L,\ell(T);L_1,L_2)+\dim L_2\cap \ell(T)-\dim L_1\cap \ell(T)
		\]
		Then by theorem \ref{thm:mainli}, we get
		
		\begin{equation}
		\nu(L_2,[0,T])-\nu(L_1,[0,T])=\\ \iota(L,\ell(T),L_2)+\dim L_2\cap \ell(T)-\big(\iota(L,\ell(T),L_1)+\dim L_1\cap \ell(T)\big)   \label{eq:dif_nul_1}
		\end{equation}
		\begin{equation}
		\nu(L_2,[0,T])-\nu(L_1,[0,T])=\iota(L,L_1,L_2)-\big(\iota(\ell(T),L_1,L_2)+\dim L_1\cap \ell(T) -\dim L_2\cap \ell(T) \big)\label{eq:dif_nul_2}
		\end{equation}
		By using Equation~\eqref{eq:triple} and Equation~\eqref{eq:dif_nul_1}, we get that 
		\begin{equation}
		\iota(L,\ell(T),L_i)+\dim L_i\cap \ell(t) \le n-\dim L\cap \ell(T)+\dim L\cap \ell(T)\cap L_i \label{eq:triple_estimate1}.
		\end{equation}
		Moreover, for arbitrary Lagrangian subspaces $\alpha,\beta,\gamma$, we have
		\[
		\iota(\alpha,\beta,\gamma)=n_+ Q(\alpha, \beta,\gamma)+\dim \alpha\cap \gamma-\dim \alpha\cap \beta\cap\gamma +\dim \alpha\cap \beta-\dim \alpha\cap \gamma=\iota (\beta,\gamma,\alpha).
		\] 
		Then by \eqref{eq:dif_nul_2} it follows that
		\begin{equation}
		\nu(L_2,[0,T])-\nu(L_1,[0,T]) =\iota(L,L_1,L_2)-\iota(L_1,L_2,\ell(T))\label{eq:triple_estimate2}.
		\end{equation}
		By using Equation~\eqref{eq:triple_estimate1} and  Equation~\eqref{eq:triple_estimate2}, we get the thesis  arguing precisely as given in  Theorem~\ref{thm:main-corollary-alternation} .
\end{proof}

\begin{rem}
We observe that the estimates provided in Theorem~\ref{thm:alternation-plus} is, in general, sharper than the one proved by Arnol'd for which the difference was bounded by $n$.	
\end{rem}
The next result represents a generalization of \cite[Theorem on Zeros]{Arn86}. 
\begin{thm}\label{thm:pigeon4-3}{\bf [Sturm Theorem on Zeros]\/}
Under the notation of Theorem \ref{thm:alternation-plus}, we get that  for any interval $[\alpha, \beta]\subset [0,T]$,
\begin{itemize}
\item if $\big\vert\nu\big(L_2,[\alpha,\beta])\big\vert>n-k$, then there is at least one crossing instant of $\ell$ with $L_1$;
\item if $\big\vert\nu\big(L_1,[\alpha,\beta])\big\vert>n-k$, then there is at least one crossing instant of $\ell$ with $L_2$.
\end{itemize}
\end{thm}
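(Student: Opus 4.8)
The plan is to argue by contraposition, reducing the statement to the sharpened alternation estimate of Theorem~\ref{thm:alternation-plus} applied on the subinterval $[\alpha,\beta]$ in place of $[0,T]$ (so that $k$, $k_1$, $k_2$, $\epsilon_i$, $\delta_i$ are understood relative to the restricted curve, with $\ell(\beta)$ playing the role of $\ell(T)$). First I would record the elementary observation that, since $t\mapsto\ell(t)$ is both an $L_1$-plus and an $L_2$-plus curve, each non-transversality instant with $L_i$ is non-degenerate, hence isolated; therefore on the compact interval $[\alpha,\beta]$ there are only finitely many of them, and $\nu(L_i,[\alpha,\beta])$ equals the finite sum of the positive multiplicities $\mul(t_0)=\dim(\ell(t_0)\cap L_i)$ over all such instants, endpoints included. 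In particular $\nu(L_i,[\alpha,\beta])\ge 0$, and $\nu(L_i,[\alpha,\beta])=0$ exactly when $\ell$ has no crossing instant with $L_i$ on $[\alpha,\beta]$.

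For the first bullet, assume by contradiction that $\ell$ has no crossing instant with $L_1$ on $[\alpha,\beta]$, so that $\nu(L_1,[\alpha,\beta])=0$ by the previous step. I would then apply Theorem~\ref{thm:alternation-plus} on $[\alpha,\beta]$ — its proof rests only on the homotopy invariance and concatenation additivity of the $\iCLM$-index together with the plus-curve formula relating $\nu(L_i,\cdot)$ to the $\iCLM$-index plus an endpoint correction term, none of which depends on the particular compact parameter interval — to obtain
\[
\big\vert\nu(L_2,[\alpha,\beta])-\nu(L_1,[\alpha,\beta])\big\vert\le n-k .
\]
Since $\nu(L_1,[\alpha,\beta])=0$ and $\nu(L_2,[\alpha,\beta])\ge 0$, this gives $|\nu(L_2,[\alpha,\beta])|\le n-k$, contradicting the hypothesis $|\nu(L_2,[\alpha,\beta])|>n-k$. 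Hence $\ell$ must cross $L_1$ at least once in $[\alpha,\beta]$. The second bullet is proved identically after interchanging the roles of $L_1$ and $L_2$.

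The only point that needs genuine care is the endpoint bookkeeping: one has to interpret ``$\ell$ has no crossing instant with $L_i$ on $[\alpha,\beta]$'' so as to include the endpoints $\alpha$ and $\beta$, so that it really forces $\nu(L_i,[\alpha,\beta])=0$; this is consistent with the plus-curve counting convention fixed throughout Section~\ref{subsec:optical} (cf. Lemma~\ref{thm:maslov-positive} and Theorem~\ref{thm:alternation-plus}). Everything else is a one-line pigeonhole consequence of the alternation inequality, which is precisely the classical Sturm mechanism: too many verticalities with respect to one Lagrangian, measured against the defect $n-k$, force a verticality with respect to the other.
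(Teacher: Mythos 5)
Your proposal is correct and follows exactly the paper's (one-line) argument: the statement is the triangle-inequality/contrapositive consequence of Theorem~\ref{thm:alternation-plus} applied on $[\alpha,\beta]$, using that $\nu(L_i,\cdot)\ge 0$ for plus curves. Your extra care about restricting the alternation estimate to the subinterval and about the endpoint counting convention only makes explicit what the paper leaves implicit.
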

\begin{proof}
The proof follows immediately by using triangular inequality and Theorem \ref{thm:alternation-plus}.	
\end{proof}


\section{Sturm comparison principles} \label{sec:Optical Hamiltonians and the s-principle}

In this section we provide some new comparison principles as well as a generalization of the classical Sturm comparison principle. Our first result is a generalization of the comparison principle which was proved by third named author in \cite[Section 5]{Off00}.
\begin{thm}({\bf Comparison Principle\/})\label{thm:comparison-1}
	Let $L_1, L_2, L_3 \in \Lagr(V,\omega)$,  $\psi \in \PT{V,\omega}$ and  for $i=1,2$ we  set $\ell_i(t)\=\psi(t)L_i$. We assume that 
	\begin{enumerate}
		\item  $t\mapsto \ell_2(t)$  is $L_3$-plus curve
		\item $\itriple(L_1,L_2,L_3)=n-\dim(L_1\cap L_2)$
		\item $\iCLM(L_3, \ell_1(t); t \in [0,T])=0$.
	\end{enumerate}
Then $\iCLM(L_3, \ell_2(t);t \in [0,T])=0$.
\end{thm}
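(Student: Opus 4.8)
The plan is to exploit the Hörmander-index machinery from Section~\ref{sec:symplectic-sturm-theorems} together with the monotonicity of the $L_3$-plus curve. First I would use the symplectic invariance of the $\iCLM$-index to rewrite both relevant Maslov indices in terms of the fixed Lagrangians $L_1, L_2$ and the conjugated curve $\mu(t) \= \psi^{-1}(t)L_3$, namely
\[
\iCLM(L_3, \ell_i(t); t\in[0,T]) = \iCLM(\mu(t), L_i; t\in[0,T]), \qquad i=1,2.
\]
The difference of these two quantities is, by the definition of the Hörmander index and Proposition~\ref{thm:mainli},
\[
\iCLM(L_3,\ell_2(t);t\in[0,T]) - \iCLM(L_3,\ell_1(t);t\in[0,T]) = s(L_1,L_2;L_3,\mu(T)) = \itriple(L_1,L_2,L_3) - \itriple(L_1,L_2,\mu(T)).
\]
(One must be careful about which of the two equivalent expressions for $s$ in Equation~\eqref{eq:utile} is being used; the version in terms of $L_1, L_2$ as the ``moving'' pair is the one that makes hypothesis~(2) directly applicable.)

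Next I would feed in the three hypotheses. Hypothesis~(3) gives $\iCLM(L_3,\ell_1(t);t\in[0,T])=0$, so it suffices to show $\iCLM(L_3,\ell_2(t);t\in[0,T]) \le 0$ and $\ge 0$. The inequality $\ge 0$ is automatic because $t\mapsto\ell_2(t)$ is an $L_3$-plus curve: by Lemma~\ref{thm:maslov-positive} its $\iCLM$-index is a sum of nonnegative multiplicities, hence nonnegative. For the reverse inequality I would combine the displayed identity above with hypothesis~(2), which says $\itriple(L_1,L_2,L_3) = n-\dim(L_1\cap L_2)$ — this is the \emph{maximal} possible value of the triple index (it is the coindex of a quadratic form on a space of dimension $n-\dim(L_1\cap L_2)$, after passing to the symplectic reduction as in the proof of Theorem~\ref{thm:main-1}). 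Meanwhile $\itriple(L_1,L_2,\mu(T)) \ge \dim(L_1\cap L_2)$ — wait, more precisely one uses Equation~\eqref{eq:triple} to get the lower bound $\itriple(L_1,L_2,\mu(T)) \ge \dim(L_1\cap\mu(T)) + \dim(L_1\cap L_2) - \dim(L_1\cap L_2\cap\mu(T))$, which in any case is $\ge \dim(L_1\cap L_2) - (\text{correction})$; the cleanest route is simply that $\itriple(L_1,L_2,\mu(T)) \ge \dim(L_1\cap L_2)$ is \emph{not} quite what is needed, so instead I would note
\[
\iCLM(L_3,\ell_2(t);t\in[0,T]) = 0 + \itriple(L_1,L_2,L_3) - \itriple(L_1,L_2,\mu(T)) \le \big(n-\dim(L_1\cap L_2)\big) - \itriple(L_1,L_2,\mu(T)),
\]
and then invoke the general lower bound for the triple index from Equation~\eqref{eq:triple} to conclude the right-hand side is $\le 0$.

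The main obstacle I anticipate is getting the lower bound on $\itriple(L_1,L_2,\mu(T))$ to come out to exactly $n-\dim(L_1\cap L_2)$, so that the upper bound on $\iCLM(L_3,\ell_2)$ is precisely $0$ rather than something positive. This forces me to check that $\dim(L_1\cap\mu(T)) - \dim(L_1\cap L_2\cap\mu(T))$ and the remaining terms in Equation~\eqref{eq:triple} assemble correctly — and this is presumably where hypothesis~(3) gets used a second time (beyond merely supplying the $0$): the vanishing of $\iCLM(L_3,\ell_1)=\iCLM(\mu,L_1;[0,T])$, combined with $\mu(0)=L_3$ and the Hörmander/triple-index identities at the endpoints, should pin down the intersection dimensions $\dim(L_1\cap\mu(T))$ relative to $\dim(L_1\cap L_3)$. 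So the delicate bookkeeping is: translate ``$\iCLM(\mu,L_1;[0,T])=0$'' into a statement about $\itriple(L_1,L_2,\mu(T))$ via the endpoint values of the triple index, and verify this forces $\itriple(L_1,L_2,\mu(T)) \ge n-\dim(L_1\cap L_2)$. Once that is in hand, the two inequalities sandwich $\iCLM(L_3,\ell_2(t);t\in[0,T])$ between $0$ and $0$, completing the proof. If the bookkeeping proves recalcitrant, a fallback is to run the whole argument through Theorem~\ref{thm:main-1} directly with $\mu_1 = L_3$, $\mu_2 = \mu(T)$ (or an auxiliary Lagrangian), extracting $k$ large enough from hypothesis~(2) that $n-k=0$.
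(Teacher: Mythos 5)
Your overall architecture is the right one and matches the paper's: the $L_3$-plus hypothesis gives $\iCLM(L_3,\ell_2(t);t\in[0,T])\geq 0$ via Lemma~\ref{thm:maslov-positive}, and the upper bound is to come from rewriting the difference of the two Maslov indices as a H\"ormander index and then as a difference of triple indices. However, there is a sign error in your application of Proposition~\ref{thm:mainli} that derails the second half. With $\mu(t)\=\psi(t)^{-1}L_3$, so that $\mu_1=\mu(0)=L_3$ and $\mu_2=\mu(T)=\psi(T)^{-1}L_3$, the proposition gives
\[
s(L_1,L_2;L_3,\mu(T))=\itriple(L_1,L_2,\mu(T))-\itriple(L_1,L_2,L_3),
\]
i.e.\ the triple index at the \emph{final} Lagrangian minus the one at the \emph{initial} Lagrangian — the opposite of what you wrote. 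With the correct sign the proof closes immediately: hypothesis~(2) says $\itriple(L_1,L_2,L_3)=n-\dim(L_1\cap L_2)$, and the general upper bound in Equation~\eqref{eq:triple} gives $\itriple(L_1,L_2,\mu(T))\leq n-\dim(L_1\cap L_2)-\dim(L_2\cap\mu(T))+\dim(L_1\cap L_2\cap\mu(T))\leq n-\dim(L_1\cap L_2)$, so the difference is $\leq 0$ and the sandwich yields $0$.

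Because of the flipped sign you end up needing a \emph{lower} bound on $\itriple(L_1,L_2,\mu(T))$ equal to its maximal possible value $n-\dim(L_1\cap L_2)$, and you correctly sense that this is problematic. Your proposed repair — extracting extra intersection-dimension information from hypothesis~(3) to force $\itriple(L_1,L_2,\mu(T))\geq n-\dim(L_1\cap L_2)$ — does not work and is not needed: hypothesis~(3) is used only once, to replace $\iCLM(L_3,\ell_1)$ by $0$, and nothing in the hypotheses forces the triple index at $\mu(T)$ to be maximal. Your fallback via Theorem~\ref{thm:main-1} with $n-k=0$ also fails, since $k=n$ would require intersections of full dimension that are not implied here. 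Fix the orientation of the H\"ormander/triple-index identity and the rest of your argument is exactly the paper's proof.
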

\begin{rem}
Before proving this result, we observe that assumption \textit{2.} corresponds to require that the triple index is as large as possible. In fact, by assumption \textit{1.} the term $\dim (L_1\cap L_2 \cap L_3)$ drops down. This assumption, somehow replaces the condition on $Q(L_1,L_2;L_3)$ to be positive definite  in this (maybe degenerate) situation.
\end{rem}
\begin{proof}
We start to observe that by assumption \textit{3.} $\iCLM(L_3, \ell_1(t); t \in [0,T])=0$ by assumption \textit{1.}, $\iCLM(L_3, \ell_2(t); t \in [0,T])$ is non-negative. Thus, we get
\begin{multline}
0 \leq 
\iCLM(L_3,\ell_2(t);t \in [0,T])-
\iCLM(L_3,\ell_1(t); t \in [0,T])\\
= 
\iCLM\big(\psi(t)^{-1}L_3, L_2; t \in [0,T]\big) -
\iCLM \big(\psi(t)^{-1}L_3, L_1; t \in [0,T]\big)	
= s(L_1, L_2; L_3, \psi(T)^{-1}L_3)\\=
\itriple (L_1, L_2, \psi(T)^{-1}L_3)-\itriple(L_1, L_2,L_3)\\
=\itriple (L_1, L_2, \psi(T)^{-1}L_3)-n + \dim (L_1\cap L_2) \leq 0 
\end{multline}
where the last inequality follows from Equation~\eqref{eq:triple}. In fact, 
\begin{multline*}
	\itriple (L_1, L_2, \psi(T)^{-1}L_3)\leq n- \dim(L_1\cap L_2) - \dim(L_2 \cap \psi(T)^{-1}L_3)+ \dim (L_1\cap L_2 \cap \psi(T)^{-1}L_3)\\ \leq n- \dim(L_1\cap L_2)
\end{multline*}
being $-\dim(L_2 \cap \psi(T)^{-1}L_3)+ \dim (L_1\cap L_2 \cap \psi(T)^{-1}L_3)\leq 0$. So, since  $0 \leq \iCLM(L_3,\ell_2(t); t \in [0,T])\leq 0$, we get  that $\iCLM(L_3, \ell_2(t); t \in [0,T])= 0$. This concludes the proof. 
\end{proof}
A direct consequence of the Theorem \ref{thm:comparison-1}, we get the following result which is in the form appearing in  \cite[Theorem 5.1]{Off00}.
\begin{cor}({\bf Comparison Principle\/})\label{thm:2comparison-2}
	Let $L_1, L_2, L_3 \in \Lagr(V,\omega)$,  $\psi \in \PT{V,\omega}$ and  for $i=1,2$ we  set $\ell_i(t)\=\psi(t)L_i$. We assume that 
	\begin{enumerate}
		\item $t\mapsto \ell_2(t)$  is $L_3$-plus curve
		\item $\itriple(L_1,L_2,L_3)=n-\dim(L_1\cap L_2)$
		\item $t\mapsto \ell_1(t)\in \Lagr^0(L_3)$.
	\end{enumerate}
	Then $t\mapsto \ell_2(t)\in \Lagr^0(L_3)$.
\end{cor}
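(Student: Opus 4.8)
The plan is to deduce Corollary~\ref{thm:2comparison-2} from Theorem~\ref{thm:comparison-1} by observing that the three hypotheses of the Corollary imply the three hypotheses of the Theorem, after which the conclusion of the Theorem gives exactly what is claimed. Hypotheses \textit{1.} and \textit{2.} are literally the same in both statements, so the only work is to translate the condition ``$t\mapsto\ell_1(t)\in\Lagr^0(L_3)$'' (that $\ell_1$ stays transversal to $L_3$ on $[0,T]$) into the condition $\iCLM(L_3,\ell_1(t);t\in[0,T])=0$, and conversely to translate the conclusion $\iCLM(L_3,\ell_2(t);t\in[0,T])=0$ back into $t\mapsto\ell_2(t)\in\Lagr^0(L_3)$.

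First I would recall that if $\ell_1(t)\cap L_3=\{0\}$ for every $t\in[0,T]$, then there are no crossing instants of $\ell_1$ with the train $\Sigma(L_3)$, and hence by the crossing formula for the $\iCLM$-index (Equation~\eqref{eq:iclm-crossings}, together with the fact that the endpoint contributions also vanish when $\ell_1(0)$ and $\ell_1(T)$ are transversal to $L_3$) one gets $\iCLM(L_3,\ell_1(t);t\in[0,T])=0$. This supplies hypothesis \textit{3.} of Theorem~\ref{thm:comparison-1}. Applying Theorem~\ref{thm:comparison-1} then yields $\iCLM(L_3,\ell_2(t);t\in[0,T])=0$.

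For the converse direction I would use that, by hypothesis \textit{1.}, $t\mapsto\ell_2(t)$ is an $L_3$-plus curve; by Lemma~\ref{thm:maslov-positive} its $\iCLM$-index equals $\mul(0)+\sum_{t_0\in\,]0,T[}\mul(t_0)$, a sum of non-negative integers $\mul(t_0)=\dim(\ell_2(t_0)\cap L_3)$. Since this sum is zero, every term is zero, so $\dim(\ell_2(t_0)\cap L_3)=0$ for all $t_0\in[0,T[$; a plus curve can only leave $\Sigma(L_3)$ (never re-enter it after the final interior crossing), so together with the vanishing at $t=0$ this also forces transversality at $t=T$ (indeed, absence of crossings on the whole closed interval). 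Hence $\ell_2(t)\cap L_3=\{0\}$ for every $t\in[0,T]$, i.e.\ $t\mapsto\ell_2(t)\in\Lagr^0(L_3)$, which is the assertion.

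The only genuinely delicate point is bookkeeping at the endpoints: one must make sure that the equivalence ``$\iCLM(L_3,\ell(t);t\in[0,T])=0$ $\iff$ $\ell(t)$ transversal to $L_3$ throughout'' is valid with the half-open endpoint convention used for $\iCLM$ in this paper, and that for an $L_3$-plus curve the vanishing of the (half-open) Maslov index really does rule out a crossing at $t=T$ as well. This is where I would be most careful; everything else is a direct substitution into Theorem~\ref{thm:comparison-1} and an appeal to Lemma~\ref{thm:maslov-positive}.
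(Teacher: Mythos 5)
Your reduction to Theorem~\ref{thm:comparison-1} is fine as far as it goes: hypothesis \textit{3.} of the Corollary gives $\iCLM(L_3,\ell_1(t);t\in[0,T])=0$ since there are no crossings at all, the Theorem then yields $\iCLM(L_3,\ell_2(t);t\in[0,T])=0$, and Lemma~\ref{thm:maslov-positive} converts this into $\ell_2(t)\cap L_3=\{0\}$ for all $t\in[0,T)$. But the step you yourself flag as delicate --- transversality at $t=T$ --- is where the argument breaks. Your justification, that ``a plus curve can only leave $\Sigma(L_3)$ and never re-enter it after the final interior crossing,'' is false: positivity of the crossing form is a local condition at each crossing and does not prevent the curve from meeting the train again later, nor from meeting it for the first time exactly at $t=T$. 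Concretely, in $\R^2$ take $L=\gen\{(1,0)\}$, $L_3=\gen\{(0,1)\}$ and $\ell(t)=e^{tJ}L$ on $[0,\pi/2]$: this is an $L_3$-plus curve with no crossings on $[0,\pi/2)$, its $\iCLM$-index is $0$ (the endpoint crossing at $\pi/2$ contributes $-\iMor$ of a positive definite form, i.e.\ nothing, in formula~\eqref{eq:iclm-crossings} and is excluded from the sum in Lemma~\ref{thm:maslov-positive}), yet $\ell(\pi/2)=L_3$. So vanishing of the Maslov index of a plus curve does not rule out a crossing at the right endpoint; you need the remaining hypotheses to do that, and you never invoke them for this step.

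The paper closes exactly this gap by a separate argument at $t=T$. From the proof of Theorem~\ref{thm:comparison-1} the chain of inequalities collapses to the equality $\itriple(L_1,L_2,\psi(T)^{-1}L_3)=n-\dim(L_1\cap L_2)$; combined with the bound $\itriple(L_1,L_2,\psi(T)^{-1}L_3)\le n-\dim\bigl(L_1\cap L_2+L_2\cap\psi(T)^{-1}L_3\bigr)$ this forces $L_2\cap\psi(T)^{-1}L_3\subseteq L_1\cap L_2\subseteq L_1$, hence $\ell_2(T)\cap L_3\subseteq\ell_1(T)\cap L_3=\{0\}$ by hypothesis \textit{3.} You would need to add an argument of this kind (using hypotheses \textit{2.} and \textit{3.}, not just the plus property) to make your proof complete.
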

\begin{proof}
		 By means of assumption \textit{1.}, we only need to prove that  $\ell_2(T)\cap L_3=\{0\}$.
		In the proof of Theorem \ref{thm:comparison-1}, we get 
		\[
		\itriple (L_1, L_2, \psi(T)^{-1}L_3)-n + \dim (L_1\cap L_2) =0 .
		\]
		Note that $\itriple (L_1, L_2, \psi(T)^{-1}L_3)\le n-\dim(L_1\cap L_2+L_2\cap \psi(T)^{-1}L_3)$.
		
		It follows that $L_2\cap \psi(T)^{-1}L_3\subset L_1\cap L_2\subset L_1$.
		Then we have
		$\psi(T)L_2\cap L_3\subset \psi(T)L_1$, and it follows that $ \psi(T)L_2\cap L_3\subset \psi(T)L_1\cap L_3=\{0\}$. 
\end{proof}	
\begin{rem}
Corollary \ref{thm:2comparison-2} provides a generalization of \cite[Theorem 5.1]{Off00} which was proved  for paths of symplectic matrices  arising as fundamental solutions of Hamiltonian systems. Moreover we removed the Legendre convexity condition as well as the transversality condition  between the Lagrangian subspaces $L_1$ and $L_2$, which, in concrete applications such a conditions are pretty difficult to be  checked.  
\end{rem}
\begin{thm}\label{thm:comparison-2}
Under the notation of Theorem \ref{thm:comparison-1}, we  assume that 
	\begin{enumerate}
		\item $t\mapsto \ell_2(t)$  is $L_3$-plus curve
		\item $\itriple(L_1,L_2,L_3)=n-\dim(L_1\cap L_2)$
		\item $\dim(L_3 \cap L_2)=k$ 
	 \item $\iCLM(L_3, \ell_1(t); t \in [a,b])=k$ for some $ k \in \N$
	\end{enumerate}
	Then $\iCLM( L_3,\ell_2(t); t \in [a,b])=k$.
\end{thm}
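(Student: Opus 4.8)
The plan is to mimic the argument of Theorem~\ref{thm:comparison-1}, but track the contribution of the non-trivial intersections with $L_3$ at the endpoints. First I would note that, by assumption \textit{1.}, the curve $t\mapsto\ell_2(t)$ is $L_3$-plus, so $\iCLM(L_3,\ell_2(t);t\in[a,b])\geq \mul(a) = \dim(L_3\cap\ell_2(a)) = \dim(L_3\cap L_2) = k$, using $\psi(a)=\Id$ (recall $\psi\in\PT{V,\omega}$). Thus it suffices to prove the reverse inequality $\iCLM(L_3,\ell_2(t);t\in[a,b])\leq k$.

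Next I would compute the difference of the two $\iCLM$-indices exactly as in Theorem~\ref{thm:comparison-1}, using symplectic invariance, Definition~\ref{def:hormander} and Proposition~\ref{thm:mainli}:
\begin{multline}
\iCLM(L_3,\ell_2(t);t\in[a,b])-\iCLM(L_3,\ell_1(t);t\in[a,b])
= s(L_1,L_2;L_3,\psi(T)^{-1}L_3)\\
= \itriple(L_1,L_2,\psi(T)^{-1}L_3)-\itriple(L_1,L_2,L_3)
= \itriple(L_1,L_2,\psi(T)^{-1}L_3)-n+\dim(L_1\cap L_2),
\end{multline}
where the last step uses assumption \textit{2.} Substituting assumption \textit{4.}, namely $\iCLM(L_3,\ell_1(t);t\in[a,b])=k$, gives
\begin{equation}
\iCLM(L_3,\ell_2(t);t\in[a,b]) = k + \itriple(L_1,L_2,\psi(T)^{-1}L_3) - n + \dim(L_1\cap L_2).
\end{equation}
So the claim reduces to showing $\itriple(L_1,L_2,\psi(T)^{-1}L_3)\leq n-\dim(L_1\cap L_2)$. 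This is exactly the estimate already extracted inside the proof of Theorem~\ref{thm:comparison-1}: by Equation~\eqref{eq:triple},
\begin{multline}
\itriple(L_1,L_2,\psi(T)^{-1}L_3) \leq n-\dim(L_1\cap L_2)-\dim(L_2\cap\psi(T)^{-1}L_3)\\
+\dim(L_1\cap L_2\cap\psi(T)^{-1}L_3) \leq n-\dim(L_1\cap L_2),
\end{multline}
the final inequality because $\dim(L_1\cap L_2\cap\psi(T)^{-1}L_3)\leq\dim(L_2\cap\psi(T)^{-1}L_3)$. Combining, $\iCLM(L_3,\ell_2(t);t\in[a,b])\leq k$.

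Putting the two inequalities together yields $\iCLM(L_3,\ell_2(t);t\in[a,b])=k$, which is the assertion. The only subtle point — and the one I would be most careful about — is the lower bound $\iCLM(L_3,\ell_2(t);t\in[a,b])\geq k$: it relies on the $L_3$-plus property giving a non-negative crossing contribution together with the boundary term $\mul(a)$ from Lemma~\ref{thm:maslov-positive} (or directly from the crossing-form description of $\iCLM$), and on the identification $\ell_2(a)=\psi(0)L_2=L_2$ so that $\mul(a)=\dim(L_3\cap L_2)=k$ by assumption \textit{3.} Everything else is a direct transcription of the computation in Theorem~\ref{thm:comparison-1}, so no genuinely new obstacle arises.
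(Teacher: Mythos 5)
Your proposal is correct and follows essentially the same route as the paper: the lower bound $\iCLM(L_3,\ell_2(t);t\in[a,b])\geq k$ comes from the $L_3$-plus property together with $\dim(L_3\cap L_2)=k$ (the boundary term $\mul(a)$ in Lemma~\ref{thm:maslov-positive}), and the upper bound comes from repeating the H\"ormander/triple-index estimate of Theorem~\ref{thm:comparison-1} to show the difference of the two $\iCLM$-indices is $\leq 0$. Your write-up merely makes explicit the computation that the paper compresses into ``arguing precisely as in Theorem~\ref{thm:comparison-1}''.
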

\begin{proof}
We start to observe that by assumption \textit{3.} and assumption \textit{1.} we get that 
\[
\iCLM(L_3,\ell_2(t);t \in [a,b])\geq k. 
\]
Thus $0 \leq \iCLM(L_3,\ell_2(t); t \in [0,T])	-\iCLM(L_3,\ell_1(t);t \in [0,T])\leq 0$ where the last inequality follows by arguing precisely as in Theorem \ref{thm:comparison-1}. By this the conclusion readily follows. 
\end{proof}
The last result of this section is a generalized version of the Sturm comparison theorem proved by Arnol'd in the case of optical Hamiltonians. The proof of this result is essentially based on spectral flow techniques and for the sake of the reader we refer to Appendix \ref{sec:spectral-flow} for the basic definitions, notation and properties.  
Now, for $i=1,2$ let us consider the Hamiltonians $H_i:[0,T] \oplus \R^{2n}\to \R$ and the induced Hamiltonian systems
\begin{equation}\label{eq:hamsys-non-2}
z'(t)=J_0 \nabla H_i\big(t,z(t)\big).
\end{equation}
By linearizing Equation \eqref{eq:hamsys-non-2} at a common equilibrium point $z_0$, we get
\begin{equation}\label{eq:hamsys-non-3}
w'(t)=J_0B_i(t)w(t),
\end{equation}
where $B_i(t) = D^2 H_i(t,z_0(t))$.
For $i=1,2$, we  denote by $\psi_i$ the fundamental solution of the corresponding linearized Hamiltonian system \eqref{eq:hamsys-non-3}.
For $s \in [0,1]$, we define the two-parameter family of symmetric matrices as follows 
\begin{equation}
	C : [0,1] \oplus [0,1] \to C^1([0,T],\Sym(2n)) \qquad 
	C_{(s,r)}(t): = C(s,r)(t) = s\big[r B_2(t)-r B_1(t)\big]+ r B_1(t).
\end{equation}
Given $L \in \Lagr(2n)$, we denote by $D(T,L)$ the subspace of $W^{1,2}$ paths defined by 
\begin{equation}\label{eq:domain}
D(T,L)\=\Set{w \in W^{1,2}([0,T], \R^{2n})|\big(w(0),w(T)\big) \in L}
\end{equation}
and we define the two parameter family of first order linear operators: 
\begin{equation}\label{eq:unbounded-oper}
	\mathcal A_{(s,r)}: D(T,L) \subset L^2([0,T], \R^n)\to 	L^2([0,T], \R^{2n}) \textrm{ defined by } \mathcal A_{(s,r)}\= -J_0 \dfrac{d}{dt} - C_{(s,r)}(t).
	\end{equation}
It is well-known that for every $(s,r)\in [0,1]\oplus[0,1]$, the linear operator $\mathcal A_{(s,r)}$ is unbounded self-adjoint in $L^2$ with dense domain $D(T,L)$. We also observe that being the domain independent on $(s,r)$ the linear operator $\mathcal A_{(s,r)}:D(T,L) \to L^2([0,T], \R^{2n})$ is bounded.  
\begin{thm}{\bf (First Comparison theorem)\/}\label{thm:main-comparison-sturm} 
Let $L \in \Lagr(2n)$ and under the notation above, we assume 
\begin{enumerate}
\item[(C1)]  $B_1(t)\leq B_2(t), \qquad \forall\, t \in [0,T]$. \end{enumerate}
Then we get
\begin{equation}
	\spfl(\mathcal A_2)\leq \spfl(\mathcal A_1)
	\end{equation} 
where $\mathcal A_1\=\mathcal A_{(0,r)}$ and $\mathcal A_2\=\mathcal A_{(1,r)}$.
\end{thm}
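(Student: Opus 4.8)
The plan is to connect the spectral flow of the operator family $\mathcal A_{(s,r)}$ to a Maslov index of the associated Lagrangian path, and then to use the monotonicity hypothesis (C1) to control the sign of each crossing. First I would recall the spectral flow formula referred to in the introduction and Appendix \ref{sec:spectral-flow}: for a path $s \mapsto \mathcal A_{(s,r)}$ of unbounded self-adjoint Fredholm operators with fixed domain $D(T,L)$, the spectral flow equals (up to a fixed boundary correction) the Maslov index $\iCLM\big(L, \Graph\psi_s(T);\ s\in[0,1]\big)$, where $\psi_s$ is the fundamental solution of $w' = J_0 C_{(s,r)}(t) w$ evaluated at the right endpoint; more precisely one wants the two-sided version comparing the endpoints $\spfl(\mathcal A_2) - \spfl(\mathcal A_1)$ with a Maslov-type intersection index of the homotopy of symplectic paths interpolating between $\psi_1$ and $\psi_2$. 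The point of introducing the auxiliary parameter $r$ and the affine family $C_{(s,r)}(t) = s[rB_2(t)-rB_1(t)] + rB_1(t)$ is precisely to have a genuine homotopy through which to compute this difference.

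Next I would compute the crossing form. At a crossing instant $s_0$ (a value of $s$ for which $\Graph\psi_{s_0}(T)\cap L \neq \{0\}$, equivalently $\ker \mathcal A_{(s_0,r)}\neq\{0\}$), the crossing form is the quadratic form $w \mapsto -\langle \dot{\mathcal A}_{(s,r)} w, w\rangle_{L^2}$ restricted to the kernel, and since $\partial_s C_{(s,r)}(t) = r\big(B_2(t)-B_1(t)\big)$ this becomes (for $r>0$)
\begin{equation}
\Gamma(s_0)[w] = r\int_0^T \big\langle \big(B_2(t)-B_1(t)\big) w(t), w(t)\big\rangle\, dt,
\end{equation}
which by hypothesis (C1) is positive semidefinite. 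Thus the spectral flow of $s \mapsto \mathcal A_{(s,r)}$ is monotone: all crossings contribute nonnegatively, so $\spfl(\mathcal A_{(1,r)}) - \spfl(\mathcal A_{(0,r)}) \geq 0$, i.e. $\spfl(\mathcal A_2) \geq \spfl(\mathcal A_1)$... but this is the opposite inequality to the claimed one. So I would be careful about sign conventions: with the operator written as $\mathcal A = -J_0\frac{d}{dt} - C$, the spectral flow counts eigenvalues crossing from negative to positive as $s$ increases, and increasing $C$ pushes eigenvalues of $-J_0\frac{d}{dt}-C$ downward, so negative eigenvalues are created, giving $\spfl(\mathcal A_2)\le\spfl(\mathcal A_1)$. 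Pinning down this sign — reconciling the sign in the spectral flow formula, the sign $-C$ in the definition of $\mathcal A$, and the orientation convention for $\iCLM$ — is the step I expect to require the most care, and I would do it by checking the scalar $n=1$, $B_1=0$, $B_2>0$ model case explicitly (there the eigenvalue count is classical).

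Finally I would assemble the argument: by the homotopy invariance and concatenation properties of the spectral flow, and since the domain $D(T,L)$ is independent of $(s,r)$ so the family is continuous (indeed $C^1$) in the gap topology, the spectral flow is well-defined and additive; the crossing-form computation above shows each crossing along $s\mapsto \mathcal A_{(s,r)}$ has a definite sign governed by (C1); hence $\spfl(\mathcal A_2) \leq \spfl(\mathcal A_1)$. The main obstacle, as noted, is bookkeeping of orientations and signs rather than any deep difficulty; a secondary technical point is to ensure the endpoints of the path $s\mapsto\mathcal A_{(s,r)}$ need not be invertible, so one must use the version of the spectral flow that assigns the standard ($-\tfrac12\sgn$ or one-sided) convention at the endpoints consistently with the $\iCLM$ endpoint convention fixed in Section \ref{sec:Maslov}, and verify that the correction terms at $s=0$ and $s=1$ cancel in the difference $\spfl(\mathcal A_2)-\spfl(\mathcal A_1)$ because $C_{(0,r)} = C_{(1,r)} = rB_1$... no: $C_{(1,r)} = rB_2$. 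The endpoint kernels are $\ker\mathcal A_1$ and $\ker\mathcal A_2$, generally nonzero, so I would use the convention in which the spectral flow is computed with a small negative perturbation at both ends (as in \cite{ZWZ18}), which makes the inequality non-strict exactly as stated.
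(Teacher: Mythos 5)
Your proposal follows the paper's own route: the two-parameter family $\mathcal A_{(s,r)}$, homotopy invariance of the spectral flow on the rectangle, and the fact that hypothesis (C1) forces a sign on the crossing form in the $s$-direction. Two loose ends in your write-up should be closed. First, the sign: with the paper's convention (Definition \ref{def:crossing}, which has \emph{no} minus sign: $\Gamma(L,t_0)[u]=\langle \dot L_{t_0}u,u\rangle_{\mathcal H}$) and $\dot{\mathcal A}_{(s,1)}=-\partial_s C_{(s,1)}=-(B_2(t)-B_1(t))\leq 0$, the crossing form along the top edge $s\mapsto\mathcal A_{(s,1)}$ is negative semidefinite; after the $\delta\,\Id$ perturbation of Lemma \ref{thm:perturbazione} every term in Equation \eqref{eq:spectral-flow-crossings} is non-positive, so $\spfl(\mathcal A_{(s,1)}; s\in[0,1])\leq 0$ and the inequality comes out with the stated orientation — no model-case check is needed, and the extraneous minus sign you inserted in the crossing form is what produced your provisional reversed inequality. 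Second, the identification $\spfl(\mathcal A_2)-\spfl(\mathcal A_1)=\spfl(\mathcal A_{(s,1)};s\in[0,1])$ is not automatic from monotonicity of the $s$-path, because $\spfl(\mathcal A_1)$ and $\spfl(\mathcal A_2)$ are by definition spectral flows of the \emph{vertical} paths $r\mapsto\mathcal A_{(0,r)}$ and $r\mapsto\mathcal A_{(1,r)}$; one needs the full rectangle identity $\spfl(\mathcal A_{(s,0)})+\spfl(\mathcal A_{(1,r)})=\spfl(\mathcal A_{(0,r)})+\spfl(\mathcal A_{(s,1)})$ together with the observation that the bottom edge is the \emph{constant} path $-J_0\,d/dt$ (since $C_{(s,0)}\equiv 0$) and hence has vanishing spectral flow. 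That is exactly where the auxiliary parameter $r$ earns its keep, and it is the step your final paragraph leaves implicit. The opening detour through the Maslov index is unnecessary for this statement — that is the content of Proposition \ref{thm:spfl} and the Second Comparison Theorem — since the paper's proof here stays entirely at the level of the spectral flow.
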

Before proving the result, we observe that the assumption (C1) guarantees that  the curve $s \mapsto \mathcal A_{(s,r)}$ is a plus-curve.
\begin{proof}
The proof of this result is based upon the homotopy invariance of the spectral flow. Let us consider the two parameter family of operators $ \mathcal A_{(s,r)}$ defined above, and we observe that, as direct consequence of the homotopy invariance (since the rectangle $R$ is contractible), we get that 
\begin{equation}\label{eq:nuova}
	\spfl\left( \mathcal A_{(s,0)}, s \in [0,1]\right) + \spfl\left( \mathcal A_{(1,r)}, r \in [0,1]\right)=\spfl\left( \mathcal A_{(0,r)}, r\in [0,1]\right)+\spfl\left( \mathcal A_{(s,1)}, s \in [0,1]\right).
\end{equation}
We now observe that the first term  $\spfl\left( \mathcal A_{(s,0)}, s \in [0,1]\right)=0$. This follows by the fact that $\mathcal A_{(s,0)}$ is a fixed operator. Let us now consider the second term  in the right-hand side of Equation \eqref{eq:nuova}, namely $\spfl\left( \mathcal A_{(s,1)}, s \in [0,1]\right)$. By Lemma \ref{thm:perturbazione} we can assume that for $\delta >0$ sufficiently small the path 
\[
\mathcal A^\delta_s\=\mathcal A_{(s,1)}+ \delta \,\Id 
\]
where $\Id$ denotes the identity on $L^2$, has only regular crossings.  So, by the homotopy invariance of the spectral flow we get that 
\begin{equation}\label{eq:4-22}
	\spfl\left( \mathcal A_{(s,1)}, s \in [0,1]\right)= \spfl\left( \mathcal A_s^\delta, s \in [0,1]\right)
\end{equation}
and by the assumption (C1) it follows that the local contribution to the spectral flow for the path $s\mapsto \mathcal A_s^\delta$ at each crossing instant is negative, i.e. 
\begin{equation}\label{eq:ultima-compar}
\spfl\left( \mathcal A_s^\delta, s \in [0,1]\right) \leq 0	
\end{equation}
Summing up Equation \eqref{eq:nuova}, Equation \eqref{eq:4-22} and finally Equation  \eqref{eq:ultima-compar}, we finally get that 
	\[
	\spfl\left( \mathcal A_{(1,r)}, r \in [0,1]\right) \leq\spfl\left( \mathcal A_{(0,r)}, r \in [0,1]\right).
	\]
\end{proof}
In order to relate the spectral flow for a path of Hamiltonian operators with the Maslov index of the induced Lagrangian curve, we need to use a spectral flow formula.

Let us now consider the path $s \mapsto \mathcal L_s$ of unbounded Hamiltonian operators that are selfadjoint in $L^2$ and defined on the domain $D(T,L)$ given in Equation \eqref{eq:domain}
\[
\mathcal L_s\=-J_0 \dfrac{d}{dt}- E_s(t)
\]
where $s \mapsto E_s(t)$ is a $\mathscr C^1$ path  of symmetric matrices such that $E_0(t)=0_{2n}$ and $E_1(t)=E(t)$, where  we denoted by $0_{2n}$ the $2n \oplus 2n$ zero matrix. 
\begin{prop}{\bf (Spectral flow formula)\/}\label{thm:spfl}
Under the above notation, the following equality holds
\begin{equation}
	-\spfl\left(\mathcal L_s, s \in [0,1]\right)= \iCLM(L, \Graph \psi(t);t \in [0,T])
\end{equation}
where $\psi$ denotes the solution of 
\[
\begin{cases}
	\dfrac{d}{dt}\psi(t)= J_0\, E(t) \psi(t), \qquad t \in [0,T]\\
	\psi(0)=\Id_{2n}.
\end{cases}
\]
\end{prop}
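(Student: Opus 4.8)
The plan is to prove the spectral flow formula by connecting the path of unbounded operators $\mathcal L_s$ to the Maslov index through the crossing forms, exploiting homotopy invariance of the spectral flow and the structure of the kernel of $\mathcal L_s$ as a space of solutions of a Hamiltonian ODE satisfying the boundary condition $L$. First I would observe that for each $s$, the kernel of $\mathcal L_s$ consists exactly of those $W^{1,2}$ paths $w$ solving $w'(t) = J_0 E_s(t) w(t)$ with $(w(0),w(T)) \in L$, and that via the fundamental solution $\psi_s$ this is isomorphic to $\Graph \psi_s(T) \cap L$ (identifying $(w(0),w(T)) = (\Id, \psi_s(T)) w(0)$). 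Hence the crossing instants of $s \mapsto \mathcal L_s$ (the values of $s$ where $\ker \mathcal L_s \neq 0$) coincide with the crossing instants of the Lagrangian path $s \mapsto \Graph\psi_s(T)$ with respect to $L$ — or, more precisely after reparametrizing, the relevant data match up.

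The key technical step is to compare the two \emph{crossing forms}. On the spectral-flow side, at a crossing $s_0$ the crossing form is (up to sign) the quadratic form $w \mapsto -\langle \partial_s E_s(t)|_{s=s_0} w, w\rangle_{L^2} = -\int_0^T \langle \dot E_{s_0}(t) w(t), w(t)\rangle\, dt$ restricted to $\ker \mathcal L_{s_0}$, which is the content of the standard crossing-form computation for paths of the form $-J_0 \frac{d}{dt} - E_s(t)$ (see the spectral-flow references cited in the appendix). On the Maslov side, the Robbin--Salamon crossing form for $t \mapsto \Graph\psi(t)$ at a crossing with $L$ is computed from $\omega$ applied to the derivative of the Lagrangian path; for the graph of a Hamiltonian flow this reduces to $\langle E(t_0) u, u\rangle$ on $\Graph\psi(t_0) \cap L$. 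The heart of the proof is a Lindqvist/Duistermaat-type lemma showing these two signatures agree at every crossing (after the homotopy has been perturbed so that all crossings are regular — which is legitimate by Lemma~\ref{thm:perturbazione} and homotopy invariance, exactly as in the proof of Theorem~\ref{thm:main-comparison-sturm}). One reduces to regular crossings, applies the signature-counting characterizations $\spfl(\mathcal L_s) = -\sum_{s_0}\sgn\Gamma(\mathcal L, s_0)$ and $\iCLM(L, \Graph\psi(t)) = \sum_{t_0}\sgn\Gamma(\Graph\psi, L, t_0)$, and matches terms.

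I would then assemble the identity: since $E_0(t) = 0$ gives $\psi_0 \equiv \Id$ and the only potential crossing is at the trivial end, while the homotopy $s \mapsto E_s$ sweeps from the zero operator to $-J_0 \frac{d}{dt} - E(t)$, a two-parameter homotopy argument (introducing the $t$-variable as a second parameter exactly as $r$ was used in Theorem~\ref{thm:main-comparison-sturm}, or more directly invoking the known spectral flow formula of the cited literature) converts $\spfl(\mathcal L_s, s\in[0,1])$ into the signed count of crossings of $\Graph\psi(t)$, $t\in[0,T]$, with $L$, which is $\iCLM(L, \Graph\psi(t); t\in[0,T])$ up to the sign prescribed by the endpoint conventions. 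Care with endpoint contributions (at $t=0$ and $t=T$, and at $s=0$, $s=1$) is needed to pin down the overall sign, and this bookkeeping — reconciling the half-signature conventions at degenerate endpoints between the $\spfl$ and $\iCLM$ normalizations — is where I expect the main friction; the analytic content (self-adjointness, discreteness of the spectrum, real-analyticity of eigenvalue branches guaranteeing finitely many regular crossings) is standard for this class of operators and can be quoted from the appendix and the references \cite{RS93, APS08, HS09}.
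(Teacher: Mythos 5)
Your plan is correct and follows essentially the same route the paper takes: the paper's proof is a direct citation of \cite[Theorem 2.5]{HS09}, and the remark immediately following Proposition~\ref{thm:spfl} sketches exactly your strategy — perturb $s\mapsto \mathcal L_s$ to achieve regular crossings, identify $\ker\mathcal L_{s_0}$ with $\Graph\psi_{s_0}(T)\cap L$, show the crossing forms contribute with opposite signs, and conclude by homotopy invariance of both $\spfl$ and $\iCLM$. The only point to watch is the one you already flag: the crossing-form comparison is between the $s$-parametrized families, and the passage to the $t$-parametrized path $\Graph\psi(t)$ requires the two-parameter homotopy over the square together with the endpoint bookkeeping.
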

 \begin{proof}
 For the proof of this result, we refer the interested reader to \cite[Theorem 2.5, Equation (2.7) \& Equation (2.19)]{HS09}.
  \end{proof}
  \begin{rem}
   The basic idea behind the proof of Proposition \ref{thm:spfl} is to perturb the path $s\mapsto \mathcal L_s$ in order to get regular crossing  (which it is possible as consequence of the fixed endpoints homotopy invariance). Once this has been done, for concluding, it is enough to prove that the local contribution at each crossing instant to the spectral flow is the opposite of the local contribution to the Maslov index. This can be achieved by comparing the crossing forms as  in \cite[Lemma 2.4]{HS09} and to prove that the crossing instants for the path $s\mapsto \mathcal L_s$  are the same as the crossing instants of the path   $s\mapsto \mathcal \Graph \psi_s$ and at each crossing $s_0$ the kernel dimension of the operator $\mathcal L_{s_0}$ is equal to the $\dim(L\cap \Graph \psi_{s_0})$. The conclusion follows once again by using the homotopy properties of the $\iCLM$-index and the spectral flow. 
\end{rem}
\begin{thm}{\bf (Second Comparison theorem)\/}\label{thm:main-comparison-sturm-2} 
Under the notation above, we assume 
\begin{enumerate}
\item[(C1)]  $B_1(t)\leq B_2(t), \qquad \forall\, t \in [0,T]$. 
\end{enumerate}
Then we get
\begin{equation}
 \iCLM(L,\Graph \psi_1(t), t \in [0,T])\geq \iCLM(L,\Graph \psi_2(t); t \in [0,T]).
\end{equation} 
\end{thm}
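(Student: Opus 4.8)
The plan is to obtain Theorem~\ref{thm:main-comparison-sturm-2} as the ``geometric'' reformulation of the First Comparison theorem (Theorem~\ref{thm:main-comparison-sturm}): the latter produces an inequality between spectral flows of the Hamiltonian operator paths $\mathcal A_1,\mathcal A_2$, and the spectral flow formula of Proposition~\ref{thm:spfl} converts each of these spectral flows into the $\iCLM$-index of the corresponding curve of graphs $\Graph\psi_i$. So the whole argument is: \emph{spectral flow inequality $+$ spectral flow formula $=$ Maslov inequality}, with no new homotopy argument needed beyond what already appears in Theorem~\ref{thm:main-comparison-sturm}.

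Concretely, first I would recall from the set-up of Theorem~\ref{thm:main-comparison-sturm} the paths $\mathcal A^{(i)}_s\=-J_0\tfrac{d}{dt}-s\,B_i(t)$ on the $s$-independent domain $D(T,L)$ of \eqref{eq:domain}, for $i=1,2$ and $s\in[0,1]$; with the relabelling $s=r$ these are precisely $\mathcal A_1=\mathcal A_{(0,r)}$ and $\mathcal A_2=\mathcal A_{(1,r)}$, since $C_{(0,r)}(t)=rB_1(t)$ and $C_{(1,r)}(t)=rB_2(t)$. Each $\mathcal A^{(i)}_s$ is a path of the form $s\mapsto\mathcal L_s$ in Proposition~\ref{thm:spfl}, with $E_s(t)=s\,B_i(t)$, $E_0=0_{2n}$, $E_1=B_i$, whose associated symplectic solution of $\psi'=J_0B_i(t)\psi$, $\psi(0)=\Id_{2n}$, is exactly $\psi_i$; hence Proposition~\ref{thm:spfl} yields
\[
\iCLM\big(L,\Graph\psi_i(t);t\in[0,T]\big)=-\spfl\big(\mathcal A^{(i)}_s,\,s\in[0,1]\big),\qquad i=1,2.
\]
Then I would feed assumption (C1) into Theorem~\ref{thm:main-comparison-sturm}, getting $\spfl(\mathcal A_2)\leq\spfl(\mathcal A_1)$. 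Substituting the two displayed identities (and tracking the overall sign recorded in Proposition~\ref{thm:spfl}) turns this into $\iCLM\big(L,\Graph\psi_1(t);t\in[0,T]\big)\geq\iCLM\big(L,\Graph\psi_2(t);t\in[0,T]\big)$, which is the assertion.

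The step I expect to be the main obstacle is precisely the bookkeeping of orientations and signs: one must check that the CLM-orientation convention and the spectral-flow convention used here are the matched pair for which Proposition~\ref{thm:spfl} holds with the stated minus sign, and that the identification of the endpoint $r=1$ of $\mathcal A_{(i-1,r)}$ with the fundamental solution $\psi_i$ is orientation-compatible; this is where a slip would flip the inequality. A secondary, routine point is that Proposition~\ref{thm:spfl} (via \cite{HS09}) presupposes that the operator path consists of self-adjoint Fredholm operators on a parameter-independent domain and, after an arbitrarily small perturbation, has only regular crossings---both hold here because $B_i\in\mathscr C^1([0,T],\Sym(2n))$ and $D(T,L)$ does not move with $s$, exactly as already used in the proof of Theorem~\ref{thm:main-comparison-sturm}. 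One could, alternatively, bypass Proposition~\ref{thm:spfl} and homotope $\Graph\psi_1$ to $\Graph\psi_2$ through $t\mapsto\Graph\Psi_{(s)}(t)$, where $\Psi_{(s)}$ solves the linearized system with Hessian $(1-s)B_1+sB_2$, exploiting the monotonicity of the crossing forms guaranteed by (C1); but this essentially re-derives the spectral flow formula, so the route through Proposition~\ref{thm:spfl} is the economical one.
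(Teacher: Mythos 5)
Your proposal is correct and follows essentially the same route as the paper, whose proof is exactly the two-line combination of Theorem~\ref{thm:main-comparison-sturm} with the spectral flow formula of Proposition~\ref{thm:spfl}. Your explicit identification of $\mathcal A_{(0,r)}$, $\mathcal A_{(1,r)}$ with the paths $\mathcal L_s$ (taking $E_s=sB_i$) and the sign bookkeeping merely spell out what the paper leaves implicit.
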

\begin{proof}
The proof readily follows by Theorem \ref{thm:main-comparison-sturm} and Proposition \ref{thm:spfl}. 	
\end{proof}
As direct consequence of Theorem \ref{thm:comparison-2} we get the following useful result. 
\begin{cor}{\bf(Oscillation Theorem)\/}\label{thm:oscillation}
Let  $H: [0,T] \oplus \R^{2n}\to \R$ be a $\mathscr C^1$ natural quadratic Hamiltonian of the form 
\[
H(t,p,q)= \dfrac12\norm{p}^2 + V(t,q), \qquad (t,q,p) \in [0,T]\times \R^{2n}
\] 
such that 
\[
V(t,q) \leq \dfrac12 \omega^2\norm{q}^2 \textrm { and } V(0,q)= \dfrac12 \omega^2\norm{q}^2
\]
 Then, we get 
 \[
 \iCZ(\psi(t); t \in [0,T])\geq 2\left\lfloor{\dfrac{T\omega}{2\pi}}\right\rfloor.
\]
In particular, this number growth unboundedly as $T\to +\infty$.
 \end{cor}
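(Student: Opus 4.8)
The plan is to compare the (already linear, since $H$ is quadratic) Hamiltonian system generated by $H$ with the one generated by the isotropic harmonic oscillator of frequency $\omega$, and then to compute the Conley--Zehnder index of the latter by hand. Write the potential as $V(t,q)=\tfrac12\langle A(t)q,q\rangle$: the standing inequality $V(t,q)\le\tfrac12\omega^2\norm{q}^2$ (for every $q$) forces $V$ to be purely quadratic in $q$ and is equivalent to $A(t)\le\omega^2\Id_n$ for all $t$, while the normalisation $V(0,q)=\tfrac12\omega^2\norm{q}^2$ gives $A(0)=\omega^2\Id_n$. Thus $\psi$ is the fundamental solution of $w'=J_0 B_1(t)w$ with $B_1(t)=\diag(\Id_n,A(t))$, the Hessian of $H$ along the equilibrium $z_0\equiv 0$. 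Introduce the comparison Hamiltonian $\widetilde{H}(p,q)\=\tfrac12\bigl(\norm{p}^2+\omega^2\norm{q}^2\bigr)$, with constant Hessian $B_2\=\diag(\Id_n,\omega^2\Id_n)$ and fundamental solution $\psi_2$; explicitly $\psi_2(t)$ is block diagonal, its $n$ blocks all equal to $\left(\begin{smallmatrix}\cos\omega t & -\omega\sin\omega t\\ \omega^{-1}\sin\omega t & \cos\omega t\end{smallmatrix}\right)$, so $\psi_2(t)=\Id_{2n}$ exactly when $\omega t\in 2\pi\Z$. Since $A(t)\le\omega^2\Id_n$ we get $B_1(t)\le B_2(t)$ on $[0,T]$, i.e. assumption (C1) of Theorem~\ref{thm:main-comparison-sturm-2}; applying that theorem in the doubled space with $L=\Delta$ and recalling Definition~\ref{def:CZ} yields
\[
\iCZ\bigl(\psi(t);\,t\in[0,T]\bigr)=\iCLM\bigl(\Delta,\Graph\psi(t);\,t\in[0,T]\bigr)\ge\iCLM\bigl(\Delta,\Graph\psi_2(t);\,t\in[0,T]\bigr)=\iCZ\bigl(\psi_2(t);\,t\in[0,T]\bigr).
\]

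It then remains to bound $\iCZ(\psi_2)$ from below. A direct crossing--form computation — the crossing form of $t\mapsto\Graph\psi_2(t)$ with $\Delta$ at a crossing $t_0$ is the restriction of $w\mapsto\langle B_2 w,w\rangle$ to $\ker(\psi_2(t_0)-\Id_{2n})$, hence positive definite — shows that $t\mapsto\Graph\psi_2(t)$ is a $\Delta$-\emph{plus} curve whose crossing instants are precisely $t_j\=2\pi j/\omega$ for $j=0,1,\dots,m$, with $m\=\lfloor T\omega/2\pi\rfloor$; at each $t_j$ one has $\psi_2(t_j)=\Id_{2n}$, so $\Graph\psi_2(t_j)\cap\Delta=\Delta$ and $\mul(t_j)=2n$. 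Therefore Lemma~\ref{thm:maslov-positive} gives $\iCZ(\psi_2)=\mul(0)+\sum_{0<t_j<T}\mul(t_j)\ge 2nm\ge 2\lfloor T\omega/2\pi\rfloor$, and combining this with the displayed inequality proves the bound; the unbounded growth as $T\to+\infty$ is then obvious. Equivalently, $\iCZ(\psi_2)$ can be read off from the Bott--type iteration formula~\eqref{eq:Bott-type} together with Lemma~\ref{thm:cor2.1LZ00} applied blockwise to the rotation path.

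The delicate point is the endpoint bookkeeping in this last step: $\Graph\psi_2(0)=\Delta$ is itself a crossing, and one has to be sure it contributes its full multiplicity $2n$ while a crossing that may sit at the right endpoint $t=T$ (when $T\omega/2\pi\in\Z$) contributes nothing — this is exactly the normalisation built into the $\iCLM$--index and into Lemma~\ref{thm:maslov-positive}, so that no auxiliary $\varepsilon$--perturbation is needed. The other point requiring care is the sign: one must check that the oscillator curve is $\Delta$-plus and not $\Delta$-minus, and the hypothesis $V(0,q)=\tfrac12\omega^2\norm{q}^2$, which makes $B_1(0)=B_2(0)$, guarantees that $t\mapsto\Graph\psi(t)$ has the same behaviour at $t=0$, so that the comparison inequality above is not lost at the initial crossing. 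Everything else — the equivalence $V(t,q)\le\tfrac12\omega^2\norm{q}^2\ (\forall q)\Leftrightarrow A(t)\le\omega^2\Id_n$, the verification of (C1), the application of Theorem~\ref{thm:main-comparison-sturm-2}, and the blockwise count for the rotation — is routine.
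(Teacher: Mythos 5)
Your argument is correct and its first half is exactly the paper's: write $V(t,q)=\tfrac12\langle A(t)q,q\rangle$, observe $A(t)\le\omega^2\Id_n$ so that $B_1(t)\le B_2(t)$ for the oscillator Hessian $B_2=\diag(\Id_n,\omega^2\Id_n)$, and apply Theorem~\ref{thm:main-comparison-sturm-2} with $L=\Delta$ to get $\iCZ(\psi)\ge\iCZ(\psi_2)$. Where you diverge is the second half: the paper simply cites \cite[Equation (3.8)]{KOP19} for the value of $\iCZ(\psi_2)$, whereas you compute it in-house by checking that $t\mapsto\Graph\psi_2(t)$ is a $\Delta$-plus curve (crossing form $\langle B_2 w,w\rangle>0$), that its crossings are exactly $t_j=2\pi j/\omega$ with $\psi_2(t_j)=\Id_{2n}$ and multiplicity $2n$, and then invoking Lemma~\ref{thm:maslov-positive}; this yields $\iCZ(\psi_2)=2n(m+1)$ or $2nm$ with $m=\lfloor T\omega/2\pi\rfloor$ depending on whether $T$ is a period, hence $\ge 2\lfloor T\omega/2\pi\rfloor$. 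Your version has the advantage of being self-contained and of making the endpoint conventions (initial crossing counted with full coindex $2n$, a terminal crossing contributing $-\iMor=0$) explicit, and it actually shows the sharper bound $2n\lfloor T\omega/2\pi\rfloor$; the paper's version is shorter but opaque without the external reference. One cosmetic overstatement: the inequality $V(t,q)\le\tfrac12\omega^2\norm{q}^2$ does not by itself force $V$ to be a homogeneous quadratic (it only controls the leading quadratic part, giving $A(t)\le\omega^2\Id_n$ by a scaling argument), but since only the Hessian along the equilibrium enters the linearized system this is harmless.
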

\begin{proof}
The proof follows as direct application of Theorem \ref{thm:main-comparison-sturm-2}, in the case in which $L=\Delta$ and of \cite[Equation (3.8)]{KOP19}. 
\end{proof}
\begin{rem}
An analogous of Corollary \ref{thm:oscillation} already appears in \cite[Corollary 2 (Oscillation Theorem]{Arn86}. In this result, however, author estimates from below the moments of verticality, namely the Maslov index with respect to the Dirichlet Lagrangian. We also observe that the opposite inequality appearing in  Corollary \ref{thm:oscillation} with respect to the aforementioned  Arnol'd result is due essentially to the fact that in that paper author considered Lagrangian paths ending in the vertex of the train, whereas we are considering Lagrangian paths starting at the vertex of the train. 
\end{rem}
We close this section with a comparison theorem for Morse-Sturm systems. For $i=1,2$, let us consider the natural quadratic Hamiltonians $H_i:\R^{2n}\to \R$ of the form 
\begin{equation}
H_i(p,q)= \dfrac12\langle P_i(t)^{-1}p,p\rangle-\dfrac12\langle R_i(t)q,q\rangle 
\end{equation}
where $t\mapsto P_i(t)$ and$t\mapsto R_i(t)$ are $\mathscr C^1$-paths  symmetric matrices and $P_i(t)$ is positive definite for all $t \in [0,T]$. Thus the Hamiltonian system given in Equation \eqref{eq:hamsys-finale} reduces to 
\begin{equation}\label{eq:hamsys-finale2}
z_i'(t)=J_0 B_i(t)\, z(t), \qquad t \in [0,T] \qquad \textrm{ where }\qquad
B_i(t)\=\begin{bmatrix}
	P_i^{-1}(t) & 0\\
	0 & -R_i(t)
\end{bmatrix}. 
\end{equation}
Let $Z\subset \R^{n}\oplus \R^n$ be a linear subspace, $L_Z \in \Lagr(2n)$ be the Lagrangian subspace defined by Equation \eqref{eq:utile-p-focali} and, for $i=1,2$, we denote by $\iMorse{Z}(B_i)$ the Morse-index of the index form of the Morse-Sturm system corresponding to $B_i$. 
\begin{prop}\label{thm:comparison-MS}
Under the above notation, we assume that 
\begin{enumerate}
\item[(S1)] $P_1(t)^{-1}\leq P_2(t)^{-1}$ for every $t \in [0,T]$;
\item[(S2)] 	$R_1(t) \geq R_2(t)$ for every $t \in [0,T]$;
\end{enumerate}
	Then we get 
	\[
	\iMorse{Z}(B_1) \geq \iMorse{Z}(B_2).
	\]
\end{prop}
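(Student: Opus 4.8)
The plan is to deduce the inequality from the comparison theorem for the $\iCLM$-index, Theorem~\ref{thm:main-comparison-sturm-2}, and then to pass from the Maslov index to the Morse index by the index theorem, Theorem~\ref{thm:index}; the point is that (S1) and (S2) together are nothing but hypothesis (C1) in disguise. First I would check this: since $B_i(t)=\diag\big(P_i(t)^{-1},-R_i(t)\big)$, one has
\[
B_2(t)-B_1(t)=\diag\big(P_2(t)^{-1}-P_1(t)^{-1},\,R_1(t)-R_2(t)\big),
\]
which is positive semidefinite for every $t\in[0,T]$ precisely when $P_1(t)^{-1}\leq P_2(t)^{-1}$ and $R_1(t)\geq R_2(t)$; hence (S1)--(S2) give $B_1(t)\leq B_2(t)$ for all $t$, that is (C1). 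Applying Theorem~\ref{thm:main-comparison-sturm-2} with $L\=L_Z$ then yields
\[
\iCLM\big(L_Z,\Graph\psi_1(t);\,t\in[0,T]\big)\geq \iCLM\big(L_Z,\Graph\psi_2(t);\,t\in[0,T]\big).
\]

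Next I would translate this $\iCLM$-inequality into the desired Morse-index inequality. For $i=1,2$ I would introduce the natural quadratic Lagrangian function $L^{(i)}(t,q,v)\=\tfrac12\langle P_i(t)v,v\rangle+\tfrac12\langle R_i(t)q,q\rangle$: it is $\mathscr C^2$-convex in $v$ because $P_i(t)$ is positive definite and it is exactly quadratic in $v$, so (L1) and (L2) hold; its Legendre transform is $H_i$; and since $\partial_{vv}L^{(i)}=P_i$, $\partial_{qv}L^{(i)}=0$, $\partial_{qq}L^{(i)}=R_i$, the associated first order system~\eqref{eq:hamsys-finale} is precisely~\eqref{eq:hamsys-finale2}, with fundamental solution $\psi_i$. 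The constant path $x_i\equiv0$ is a critical point of the restricted action $A_Z^{(i)}$ for every boundary subspace $Z$, its second variation $d^2A_Z^{(i)}(0)$ is exactly the index form of the Morse--Sturm system attached to $B_i$, so $\iMorse{Z}(B_i)$ coincides with the Morse index $\iMorse{Z}(x_i)$ of Definition~\ref{def:I_Morse}; moreover $z_{x_i}\equiv0$, whence $\iMas{L_Z}(z_{x_i})=\iCLM\big(L_Z,\Graph\psi_i(t);t\in[0,T]\big)$. Theorem~\ref{thm:index} then gives
\[
\iMorse{Z}(B_i)=\iCLM\big(L_Z,\Graph\psi_i(t);\,t\in[0,T]\big)-c(Z),\qquad i=1,2,
\]
where the shift $c(Z)\in\{0,\dots,n\}$ depends only on the subspace $Z$ (cf.\ Remark~\ref{rem:comp_c(Z)}) and is therefore the same integer for $i=1$ and $i=2$. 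Subtracting the two identities and using the $\iCLM$-inequality above produces $\iMorse{Z}(B_1)-\iMorse{Z}(B_2)=\iCLM(L_Z,\Graph\psi_1(t);t\in[0,T])-\iCLM(L_Z,\Graph\psi_2(t);t\in[0,T])\geq0$, which is the assertion.

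The step I expect to require the most care is the middle one: one has to identify, unambiguously, ``the index form of the Morse--Sturm system corresponding to $B_i$'' with the second variation $d^2A_Z^{(i)}(0)$ of the natural-Lagrangian action, so that Theorem~\ref{thm:index} applies verbatim, and one has to be sure that the shift constant $c(Z)$ is genuinely independent of $i$ — which it is, because the two boundary value problems share the same $Z$ and $c(\cdot)$ is a function of $Z$ alone. By contrast the first step is elementary matrix bookkeeping and the last step is a one-line subtraction, so no further obstacle is anticipated once Theorem~\ref{thm:main-comparison-sturm-2} is invoked.
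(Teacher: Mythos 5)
Your proposal is correct and follows exactly the route of the paper's own proof: observe that (S1)--(S2) make $B_2(t)-B_1(t)=\diag\big(P_2^{-1}(t)-P_1^{-1}(t),\,R_1(t)-R_2(t)\big)$ positive semidefinite so that hypothesis (C1) holds, invoke Theorem~\ref{thm:main-comparison-sturm-2} with $L=L_Z$, and convert the resulting $\iCLM$-inequality into the Morse-index inequality via Theorem~\ref{thm:index}, the shift $c(Z)$ cancelling because it depends on $Z$ alone. The only difference is that you spell out the identification of the index form with the second variation of the natural quadratic Lagrangian action, which the paper leaves implicit.
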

\begin{proof}
Under (S1) \& (S2), it follows that $B_1(t) \leq B_2(t)$ for all $t \in [0,T]$. Thus as direct consequence of Theorem \ref{thm:main-comparison-sturm-2}, we get 
\begin{equation}
\iCLM(L_Z,\Graph \psi_1(t); t \in [0,T])
\geq
\iCLM(L_Z,\Graph \psi_2(t); t \in [0,T]).
\end{equation}
By Theorem \ref{thm:index} we infer that 
$\iCLM(L_Z,\Graph \psi_i(t); t \in [0,T]) = \iMorse{Z}(B_i)+C(Z)$ and so the thesis follows. This concludes the proof. 
\end{proof}

%

\section{Some applications in geometry and  classical mechanics}

The aim of this final section is to give some applications in differential geometry and  in classical mechanics. Inspired by \cite{JP09} from which we borrow some notation, in Subsection \ref{subsec:semirie} we shall prove some comparison results between the conjugate and focal points along a geodesic on semi-Riemannian manifold. In Subsection \ref{subsec:simple-mechanical-systems} some applications to the planar Kepler problem where provided.

%

\subsection{Comparison Theorems in semi-Riemannian geometry}\label{subsec:semirie}

Let $(M,g)$ be semi-Riemannian $n$-dimensional manifold, and let $D$ be the covariant derivative of the Levi-Civita connection of the metric tensor $g$. We denote by $R$ the Riemannian curvature tensor, chosen according to the following sign convention $R(\xi,\eta)\=[D_\xi, D_\eta]-D_{[\xi,\eta]}$. Given a geodesic $\gamma:[a,b] \to M$ the {\em Jacobi (deviation) equation along $\gamma$\/} is given by 
\begin{equation}\label{eq:Jacobi-equation}
(D/dt)^2 \xi(t)-R\big(\gamma'(t), \xi(t)\big)\gamma'(t)=0\qquad \forall\, t \in [a,b].
\end{equation}
The Jacobi equation is a linear second order differential equation whose flow $\Phi$ defines a family of isomorphisms 
\[
\Phi_t: T_{\gamma(a)}M \oplus T_{\gamma(a)}M \to T_{\gamma(t)}M \oplus T_{\gamma(t)}M \qquad \textrm{ for } t \in [a,b]
\]
 defined by $\Phi_t(v,w)\= \big(J_{v,w}(t),  (D/dt)J_{v,w}(t)\big)$ where $J_{v,w}$ is the unique Jacobi field along $\gamma$ satisfying $J(a)=v$ and $(D/dt)J(a)=w$.
 
  On the space $V\=  T_{\gamma(a)}M \oplus T_{\gamma(a)}M$, let us consider the symplectic form given by 
 \[
 \omega\big((v_1, w_1), (v_2, w_2)\big)\=g(v_1, w_2)- g(v_2, w_1)
 \]
and for all $t \in [a,b]$ we define $L_0^t=\{0\}\oplus T_{\gamma(t)}M \subset V$ and we set $\ell(t)\=\Phi_t^{-1}(L_0^t)$. It is easy to check that in this way we get a smooth curve $\ell:[a,b] \to \Lagr(V,\omega)$. We set $L_0\=\ell(a)=L_0^a$.\footnote{
We observe that even if the local chart of the atlas of the Lagrangian Grassmannian manifold is the opposite with respect to that one defined by authors in \cite{JP09}, there is no sign changing involved, since our symplectic form is the opposite of the symplectic form defined by authors in the aforementioned paper and the two minus signs cancel each other. 
}
Now, consider  a smooth connected submanifold $P$ of $M$, with $\gamma(a) \in P$ and $\gamma'(a) \in T_{\gamma(a)}P^\perp$ (where $\perp$ is the orthogonal  with respect to $g$) and we assume that the restriction of $g$ to $T_{\gamma(a)}P$ is non-degenerate. (This condition is always true if $M$ is either Riemannian or Lorentzian and $\gamma$ is timelike). Let $S$ be the second fundamental form of $P$ at $\gamma(a)$ in the normal direction $\gamma'(a)$, seen as a $g$-symmetric operator $S: T_{\gamma(a)}P \to T_{\gamma(a)}P$. 
\begin{defn}
A {\em  $P$-Jacobi field\/} is a solution $\xi$ of Equation \eqref{eq:Jacobi-equation} such that $\xi(a) \in T_{\gamma(a)}P$ and $(D/dt)\xi(a)+ S[\xi(a)]\in T_{\gamma(a)}P^\perp$. 
\end{defn}
An instant $t_0\in(a,b]$ is {\em $P$-focal\/} if there exists a nonzero $P$-Jacobi field vanishing at $t_0$. The multiplicity of a mechanical $P$-focal instant is the multiplicity of the $P$-Jacobi fields vanishing at $t_0$. To every submanifold $P$ of $M$, we associate a Lagrangian subspace $L_P\subset V$ defined by 
\begin{equation}\label{eq:concrete-lagrangian}
	L_P\=\Set{(v,w)\in T_{\gamma(a)}M\oplus T_{\gamma(a)}M| v \in T_{\gamma(a)}P \textrm{ and }w+S(v) \in T_{\gamma(a)}P^\perp}.
\end{equation}
It is worth noticing that, if the submanifold $P$ reduces to the point $\gamma(a)$, then the induced Lagrangian defined in Equation  \eqref{eq:concrete-lagrangian} reduces to  $L_0\=T_{\gamma(a)}M\oplus \{0\}$ and we term a  $P$-focal point just a {\em conjugate point\/}. Then, an instant $t \in ]a,b]$ is  $P$-focal along $\gamma$ if and only if $\ell(t)\cap L_P\neq \{0\}$ and the dimension of the intersection coincides with the multiplicity of the  $P$-focal point. We also observe that $L_0\cap L_P= T_{\gamma(a)}P^\perp \oplus \{0\}$ and hence $\dim (L_0\cap L_P)= \codim P$

For all $t \in ]a,b]$, we define the space 
\begin{equation}
A_P[t]\=\Set{(D/dt) J(t)| J \textrm { is a $P$-Jacobi field along }	\gamma \textrm{ such that } J(t)=0},
\end{equation}
whilst for $t=a$ we set $A_P[a]= T_{\gamma(a)}P^\perp$. We observe that $\dim A_P[t] = \dim \ell(t)\cap L_P$. If $P$ is just a point for all $t \in ]a,b]$, we set 
\begin{equation}
A_0[t]\=\Set{(D/dt) J(t)| J \textrm { is a $P$-Jacobi field along }	\gamma \textrm{ such that } J(a)=J(t)=0},
\end{equation}
whilst for $t=a$ we set $A_0[a]= T_{\gamma(a)}M$. As direct application of Theorem \ref{thm:main-corollary-alternation}, we get the following comparison between conjugate and focal points. 

\begin{prop}\label{thm:conj-focal}Under the previous notation, the following inequality holds
\begin{equation}
\Big \vert
\iCLM\big(L_P, \ell(t);t \in [a,b]\big)-
\iCLM\big(L_0, \ell(t);t \in [a,b]\big)
\Big\vert 
\leq n- k 
\leq \dim P
\end{equation}
where  $k= \dim \big(\ell(b) \cap L_0 + L_0 \cap L_P)$.
\end{prop}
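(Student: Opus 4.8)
The plan is to derive Proposition \ref{thm:conj-focal} directly from Theorem \ref{thm:main-corollary-alternation} (the Sturm Alternation Theorem) applied with the Lagrangian path $\ell$ coming from the Jacobi flow, the distinguished Lagrangians $L_1 = L_0$, $L_2 = L_P$, and endpoints $[a,b]$. In the notation of that theorem one has $\ell(a) = L_0$, so $L \cap \ell(T)$ must be read as $L_0^{\,a} \cap \ell(b) = \ell(a) \cap \ell(b)$. First I would write out the two candidate subspaces appearing in the theorem: $\epsilon_2 = \ell(a)\cap\ell(b) + \ell(b)\cap L_P$ (using $L_2 = L_P$) and, for the purposes of bounding $k_1 = \min\{\dim\epsilon_1,\dim\epsilon_2\}$, observe that $\epsilon_2 \supseteq \ell(b)\cap L_0 + L_0 \cap L_P$ as well, since $\ell(a) = L_0$. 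This is exactly the subspace $\ell(b)\cap L_0 + L_0\cap L_P$ whose dimension is called $k$ in the statement; hence $n - k_1 \le n - k$, and a fortiori $n - k = n - \max\{k_1,k_2\} \le n - k$ once one checks $k_2$ does not spoil things — more precisely one takes $k \ge k$ trivially and uses monotonicity of $t \mapsto n-t$ to get the first inequality $|\iCLM(L_P,\ell)-\iCLM(L_0,\ell)| \le n-k$ from Theorem \ref{thm:main-corollary-alternation}.

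Next I would establish the second inequality, $n - k \le \dim P$. Recall from the text that $L_0 \cap L_P = T_{\gamma(a)}P^\perp \oplus \{0\}$, so $\dim(L_0\cap L_P) = \codim P = n - \dim P$. Since $k = \dim(\ell(b)\cap L_0 + L_0 \cap L_P) \ge \dim(L_0\cap L_P) = n - \dim P$, we get $n - k \le n - (n-\dim P) = \dim P$. That is the whole content of the second estimate; it is purely a dimension count using the identification of $L_0 \cap L_P$ with $T_{\gamma(a)}P^\perp \oplus \{0\}$.

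The only genuinely delicate point is matching the bookkeeping of Theorem \ref{thm:main-corollary-alternation} to the present setting: the theorem there is phrased for $\ell(t) = \phi(t)L$ with $\phi$ the fundamental solution of a linear Hamiltonian system, whereas here $\ell(t) = \Phi_t^{-1}(L_0^t)$ with $L_0^t$ itself moving along $\gamma$. I would note that, because the metric is parallel and one trivializes $T_{\gamma(t)}M$ along $\gamma$ by parallel transport, the curve $\ell$ is still a smooth curve in a fixed Lagrangian Grassmannian $\Lagr(V,\omega)$, and Theorem \ref{thm:main-1} (of which Theorem \ref{thm:main-corollary-alternation} is a specialization) applies to \emph{any} continuous Lagrangian path, not only to those of the form $\phi(t)L$; so invoking Theorem \ref{thm:main-1} with $\lambda = \ell$, $\mu_1 = L_0$, $\mu_2 = L_P$, $[a,b]$ suffices and sidesteps the issue. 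The expected main obstacle is therefore not analytic but notational: being careful that $\lambda(a) = \ell(a) = L_0 = L_0^a$, so that the subspace $\epsilon_2 = \lambda(a)\cap\lambda(b) + \lambda(b)\cap\mu_2$ really does contain both $\ell(b)\cap L_0$ and $L_0\cap L_P$, which is what forces $k_1 \ge k$ and closes the argument.
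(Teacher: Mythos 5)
Your overall strategy --- deducing the proposition from Theorem \ref{thm:main-1} (equivalently Theorem \ref{thm:main-corollary-alternation}) with $\lambda=\ell$, $\mu_1=L_0$, $\mu_2=L_P$, and then getting $n-k\le\dim P$ from $\dim(L_0\cap L_P)=\codim P$ --- is exactly the route the paper intends, and both your second inequality and your closing remark that Theorem \ref{thm:main-1} applies to arbitrary continuous Lagrangian paths (so the trivialization issue is harmless) are correct. However, the step producing the first inequality has a genuine error. You try to extract the bound from $k_1$ via the subspace $\epsilon_2=\lambda(a)\cap\lambda(b)+\lambda(b)\cap\mu_2=L_0\cap\ell(b)+\ell(b)\cap L_P$, claiming $\epsilon_2\supseteq \ell(b)\cap L_0+L_0\cap L_P$. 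That containment is false in general: $\epsilon_2$ is contained in $\ell(b)$, while $L_0\cap L_P$ need not be (for instance when $\ell(b)$ is transversal to both $L_0$ and $L_P$ but $L_0\cap L_P\neq\{0\}$, which happens whenever $P$ is a proper submanifold, since $\dim(L_0\cap L_P)=\codim P$). Moreover, even if the containment held you would only control $\dim\epsilon_2$, whereas $k_1=\min\{\dim\epsilon_1,\dim\epsilon_2\}$ also involves $\epsilon_1=\ell(a)\cap\ell(b)+\ell(b)\cap L_0=L_0\cap\ell(b)$, whose dimension can be strictly smaller than $k$ (it vanishes at a transversal endpoint); so $k_1\ge k$ does not follow.

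The correct term to use is $\delta_2$, not $\epsilon_2$. With $\mu_1=L_0=\lambda(a)$ and $\mu_2=L_P$ one has $\delta_1=\lambda(a)\cap\mu_1+\mu_1\cap\mu_2=L_0$, so $\dim\delta_1=n$, and
\[
\delta_2=\lambda(b)\cap\mu_1+\mu_1\cap\mu_2=\ell(b)\cap L_0+L_0\cap L_P,
\]
which is \emph{exactly} the subspace whose dimension is called $k$ in the statement. Hence $k_2=\min\{n,k\}=k$, so $\max\{k_1,k_2\}\ge k$ and Theorem \ref{thm:main-1} gives $\big\vert\iCLM(L_P,\ell(t);t\in[a,b])-\iCLM(L_0,\ell(t);t\in[a,b])\big\vert\le n-\max\{k_1,k_2\}\le n-k$. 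With this substitution the remainder of your argument (in particular $k\ge\dim(L_0\cap L_P)=n-\dim P$, whence $n-k\le\dim P$) goes through unchanged.
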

\begin{rem}
The last inequality appearing in Proposition  \ref{thm:conj-focal} coincide with that one proved by authors in \cite[Proposition 4.3]{JP09}. 
\end{rem}
As direct consequence of the triangular inequality and Proposition \ref{thm:conj-focal}, we get the following. 
\begin{cor}
Under the notation of Proposition \ref{thm:conj-focal}, we get that, for any interval $[\alpha, \beta]\subset [a,b]$,
\begin{itemize}
	\item if $\iCLM\big(L_0, \ell(t);t \in [a,b]\big)>n-k$ then there is at least one mechanical $P$-focal instant in $[\alpha,\beta]$
	\item if $\iCLM\big(L_P, \ell(t);t \in [a,b]\big)>n-k$ then there is at least one mechanical $P$-conjugate instant in $[\alpha,\beta]$
\end{itemize}
\end{cor}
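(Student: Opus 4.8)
The plan is to prove this as the contrapositive of Proposition~\ref{thm:conj-focal} read on the subinterval $[\alpha,\beta]$ in place of $[a,b]$. The crucial remark is that Proposition~\ref{thm:conj-focal} bounds a \emph{difference} of $\iCLM$-indices, and by the H\"ormander index identity behind Theorem~\ref{thm:main-corollary-alternation} (ultimately Theorem~\ref{thm:main-1}) that difference depends only on the two endpoint Lagrangians of the interval under consideration; hence Proposition~\ref{thm:conj-focal} applies verbatim to the restriction $\ell|_{[\alpha,\beta]}$ and yields
\[
\Big\vert\iCLM\big(L_P,\ell(t);t\in[\alpha,\beta]\big)-\iCLM\big(L_0,\ell(t);t\in[\alpha,\beta]\big)\Big\vert\leq n-k\leq\dim P ,
\]
where now $k=\dim\big(\ell(\beta)\cap L_0+L_0\cap L_P\big)$ and, what really matters, the right-hand side never exceeds the interval-independent quantity $\dim P$.

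Granting this, I would argue the first bullet by contradiction. Suppose no mechanical $P$-focal instant lies in $[\alpha,\beta]$; by the very definition of $P$-focal instant this means $\ell(t)\cap L_P=\{0\}$ for all $t\in\,]\alpha,\beta]$, so (once the left endpoint is dealt with, see below) the curve $t\mapsto\ell(t)$ never meets the train $\Sigma(L_P)$ on $[\alpha,\beta]$. By the crossing description of the $\iCLM$-index (Equation~\eqref{eq:iclm-crossings}) this forces $\iCLM\big(L_P,\ell(t);t\in[\alpha,\beta]\big)=0$, whence the displayed inequality gives $\iCLM\big(L_0,\ell(t);t\in[\alpha,\beta]\big)\leq n-k$, contradicting the hypothesis. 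The second bullet is symmetric: absence of a $P$-conjugate instant in $[\alpha,\beta]$ means $\ell(t)\cap L_0=\{0\}$ there, hence $\iCLM\big(L_0,\ell(t);t\in[\alpha,\beta]\big)=0$, and the inequality then bounds $\iCLM\big(L_P,\ell(t);t\in[\alpha,\beta]\big)$ by $n-k$, again a contradiction. Both implications can equally be obtained in one stroke by the triangular inequality, exactly as in the proof of Theorem~\ref{thm:pigeon4-3}.

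The one place where care is genuinely needed — and the step I expect to be the true (if modest) obstacle — is reconciling the endpoint conventions of the $\iCLM$-index with the definition of a $P$-focal/conjugate instant, which lives on a half-open interval. One must verify that ``no crossing in $[\alpha,\beta]$'' does force the restricted index to vanish: a nonzero intersection at $\alpha$ would itself exhibit $\alpha$ as a $P$-focal point when $\alpha>a$, while for $\alpha=a$ one uses $\ell(a)=L_0$ together with $L_0\cap L_P=T_{\gamma(a)}P^\perp\oplus\{0\}$, so the contribution at $a$ is already absorbed into $k$; and since in the semi-Riemannian setting the curve need not be a plus curve, the restricted index cannot simply be compared to the index on $[a,b]$ and one really must work on $[\alpha,\beta]$ throughout. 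A final cosmetic point is to fix, in the statement, whether $n-k$ denotes the constant of Proposition~\ref{thm:conj-focal} or its subinterval analogue; since both are bounded by $\dim P$, replacing $n-k$ by $\dim P$ makes the conclusion uniform over all $[\alpha,\beta]$ and is harmless for the applications.
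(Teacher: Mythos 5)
Your argument is exactly the paper's: the corollary is presented there as an immediate consequence of the triangular inequality and Proposition \ref{thm:conj-focal} applied on the subinterval, i.e. the contrapositive reading you give (no crossing with $L_P$ forces the restricted $\iCLM$-index with respect to $L_P$ to vanish, so the difference bound caps the other index). Your additional care about the endpoint conventions (and about reading the hypothesis on $[\alpha,\beta]$ rather than $[a,b]$, which is evidently a typo in the statement) goes beyond what the paper records and is welcome, but does not change the route.
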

The last result of this paragraph is quite useful in the applications. Loosely speaking, claims that the absence of conjugate (respectively focal instants gives an upper bound on the number of focal (respectively conjugate) instants
\begin{prop}
	If $\gamma$ has no conjugate instant, then 
	\[
	|\iCLM(L_P, \ell(t); t \in [\alpha, \beta])| \leq n-k 
	\]
	for   $k= \dim \big(\ell(b) \cap L_0 + L_0 \cap L_P)$ and for every $[\alpha, \beta] \subset ]a,b]$. Similarly, if $\gamma$ has no $P$-focal instants, then 
	\[
	|\iCLM(L_0, \ell(t);t \in [\alpha, \beta])| \leq n-k. 
	\]
\end{prop}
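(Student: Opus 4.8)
The plan is to deduce both inequalities from the alternation estimate of Corollary~\ref{thm:pigeon-2} (equivalently from Theorem~\ref{thm:main-1}), after observing that the hypothesis makes the $\iCLM$-index with respect to one of the two Lagrangians vanish on every subinterval $[\alpha,\beta]\subset\,]a,b]$ and makes the Lagrangian path transverse to that Lagrangian at the endpoints $\alpha,\beta$.

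Concretely, recall from this subsection that for $t\in\,]a,b]$ one has $\ell(t)\cap L_0\neq\{0\}$ precisely when $t$ is conjugate, and $\ell(t)\cap L_P\neq\{0\}$ precisely when $t$ is $P$-focal. Hence, if $\gamma$ has no conjugate instant, the restricted path $t\mapsto\ell(t)$, $t\in[\alpha,\beta]$, avoids the train $\Sigma(L_0)$, so that $\iCLM\big(L_0,\ell(t);t\in[\alpha,\beta]\big)=0$ and, a fortiori, $\ell(\alpha)\cap L_0=\ell(\beta)\cap L_0=\{0\}$. I would then apply Corollary~\ref{thm:pigeon-2} to $\lambda=\ell|_{[\alpha,\beta]}$ with $\mu_1=L_0$ and $\mu_2=L_P$: the transversality hypothesis of that corollary is exactly what was just checked, and it yields
\[
\Big|\iCLM\big(L_P,\ell(t);t\in[\alpha,\beta]\big)-\iCLM\big(L_0,\ell(t);t\in[\alpha,\beta]\big)\Big|\leq n-\dim(L_0\cap L_P).
\]
Since the second $\iCLM$ vanishes, this becomes $\big|\iCLM\big(L_P,\ell(t);t\in[\alpha,\beta]\big)\big|\leq n-\dim(L_0\cap L_P)$; and because the absence of conjugate instants forces $\ell(b)\cap L_0=\{0\}$, the exponent $k=\dim\big(\ell(b)\cap L_0+L_0\cap L_P\big)$ appearing in the statement equals $\dim(L_0\cap L_P)$, so this is exactly the claimed bound $n-k$. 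The second assertion is the mirror image: no $P$-focal instant gives $\iCLM\big(L_P,\ell(t);t\in[\alpha,\beta]\big)=0$ together with $\ell(\alpha)\cap L_P=\ell(\beta)\cap L_P=\{0\}$, and Corollary~\ref{thm:pigeon-2} applied now with $\mu_1=L_P$, $\mu_2=L_0$ bounds $\big|\iCLM\big(L_0,\ell(t);t\in[\alpha,\beta]\big)\big|$ by $n-\dim(L_0\cap L_P)$, which again coincides with $n-k$ (using $\ell(b)\cap L_P=\{0\}$ and the symmetric reading of $k$).

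I do not anticipate a genuine obstacle here: the result is essentially a corollary of the alternation theorem once combined with the trivial fact that a Lagrangian path avoiding a train has vanishing Maslov index. The only two points requiring a line of justification are that Corollary~\ref{thm:pigeon-2} may be invoked on the subinterval $[\alpha,\beta]$ in place of $[a,b]$ -- legitimate, since that corollary holds for an arbitrary continuous Lagrangian curve on any compact interval -- and the small bookkeeping identifying the constant $n-\dim(L_0\cap L_P)$ produced by the corollary with the $n-k$ of the statement, which is precisely where the endpoint transversality furnished by the hypothesis (conjugate-free, resp.\ $P$-focal-free) enters.
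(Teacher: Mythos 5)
Your proof is correct and follows essentially the same route as the paper: the paper's own one-line argument observes that the absence of conjugate (resp.\ $P$-focal) instants forces the corresponding $\iCLM$-index to vanish and then invokes Proposition~\ref{thm:conj-focal}, which — exactly like the Corollary~\ref{thm:pigeon-2} you use — is a specialization of the H\"ormander-index estimate of Theorem~\ref{thm:main-1}. If anything your write-up is slightly more careful than the paper's, since you apply the alternation estimate on the subinterval $[\alpha,\beta]\subset\,]a,b]$, where $\ell$ is genuinely transverse to $L_0$ at both endpoints (thus avoiding the degenerate initial crossing $\ell(a)=L_0$), and you spell out why the constant $n-\dim(L_0\cap L_P)$ produced by the corollary coincides with the $n-k$ of the statement.
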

\begin{proof}
If $\gamma$ has no conjugate instants, then $\iCLM\big(L_0, \ell(t);t \in [a,b]\big)=0$. The result directly follows by applying Proposition\ref{thm:conj-focal}. Similarly for the second claim.  
\end{proof}
Let now consider  two smooth connected submanifold $P,Q$ of $M$, with $\gamma(a) \in P \cap Q$ and $\gamma'(a) \in T_{\gamma(a)}P^\perp\cap T_{\gamma(a)}Q^\perp$ (where $\perp$ is the orthogonal  with respect to $g$) and we assume that the restriction of $g$ to $T_{\gamma(a)}P$ and to $T_{\gamma(a)}Q$ are non-degenerate.  
We set 
\begin{multline}
	L_P\=\Set{(v,w)\in T_{\gamma(a)}M\oplus T_{\gamma(a)}M| v \in T_{\gamma(a)}P \textrm{ and }w+S_P(v) \in T_{\gamma(a)}P^\perp}\\
	L_Q\=\Set{(v,w)\in T_{\gamma(a)}M\oplus T_{\gamma(a)}M| v \in T_{\gamma(a)}Q \textrm{ and }w+S_Q(v) \in T_{\gamma(a)}Q^\perp}
\end{multline}
where $S_P$ and $S_Q$ denote the shape operators of $P$ and $Q$, respectively.
\begin{prop}
Let $L$ be either $L_P$ or $L_Q$. Then we have 
\begin{equation}
\Big \vert\iCLM\big(L, \ell(t);t \in [a,b]\big)-\iCLM\big(L_0, \ell(t);t \in [a,b]\big)\Big\vert \leq n- k\leq d
\end{equation}
where  $k=\max\{k_P, k_Q\}$ for
\[
k_P= \dim \big(\ell(b) \cap L_0 + L_0 \cap L_P) \textrm { and }k_Q= \dim \big(\ell(b) \cap L_0 + L_0 \cap L_Q)
\]
and  $d\=\max\{\dim P, \dim Q\}$.
\end{prop}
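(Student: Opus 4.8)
The plan is to reduce the statement to Proposition~\ref{thm:conj-focal}, applied separately to each of the two submanifolds $P$ and $Q$. The joint hypotheses on the pair $(P,Q)$ — that $\gamma(a)\in P\cap Q$, that $\gamma'(a)\in T_{\gamma(a)}P^\perp\cap T_{\gamma(a)}Q^\perp$, and that $g$ restricts nondegenerately to $T_{\gamma(a)}P$ and to $T_{\gamma(a)}Q$ — serve precisely to guarantee that $P$ and $Q$ individually satisfy the hypotheses of the conjugate/focal comparison, so that $L_P$ and $L_Q$ are well-defined Lagrangian subspaces of $(V,\omega)$ and Proposition~\ref{thm:conj-focal} applies to each. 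Thus no new analytic input is required; the whole content is to invoke that comparison twice and to keep track of the numerology.

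Concretely, I would fix $L\in\{L_P,L_Q\}$ and, using the symmetry of the statement under interchanging $P$ and $Q$, treat only the case $L=L_P$. The Lagrangian path $\ell(t)=\Phi_t^{-1}(L_0^t)$ and its vertex $L_0=\ell(a)$ are those fixed at the start of the subsection, so Proposition~\ref{thm:conj-focal} with distinguished Lagrangians $L_0$ and $L_P$ gives
\[
\Big\vert\iCLM\big(L_P,\ell(t);t\in[a,b]\big)-\iCLM\big(L_0,\ell(t);t\in[a,b]\big)\Big\vert\;\leq\;n-k_P\;\leq\;\dim P,
\]
with $k_P=\dim\big(\ell(b)\cap L_0+L_0\cap L_P\big)$, and symmetrically $\leq n-k_Q\leq\dim Q$ when $L=L_Q$. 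Since $\dim P\leq d$ and $\dim Q\leq d$, the bound $\leq n-k\leq d$ of the statement follows in either case.

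Alternatively, I would argue straight from the Sturm Alternation Theorem~\ref{thm:main-corollary-alternation} with $\mu_1=L_0$ and $\mu_2=L_P$ (resp. $L_Q$): using $\ell(a)=L_0$ the four auxiliary subspaces of that theorem simplify, $\epsilon_1=L_0\cap\ell(b)$ and $\delta_1=L_0$, so the integer $k$ it produces collapses to $\dim\big(\ell(b)\cap L_0+L_0\cap L_P\big)=k_P$, while the second inequality $n-k_P\leq\dim P$ comes from $\dim(L_0\cap L_P)=\codim P$, a fact already recorded in the text. The only point demanding any attention is exactly this bookkeeping — checking that $\ell$ is indeed the evolution of a fixed Lagrangian under a linear Hamiltonian flow and matching the constants — and there is no genuine obstacle beyond it, the result being a direct corollary of the alternation estimate.
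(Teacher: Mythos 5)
The paper gives no proof of this proposition at all, and your strategy --- apply Proposition~\ref{thm:conj-focal} once with $P$ and once with $Q$ --- is certainly the intended one; the chain $\big\vert\iCLM(L_P,\ell)-\iCLM(L_0,\ell)\big\vert\leq n-k_P\leq\dim P\leq d$ (and its $Q$-analogue) is correct, and your bookkeeping through Theorem~\ref{thm:main-corollary-alternation}, including $\dim(L_0\cap L_P)=\codim P$ giving $n-k_P\leq\dim P$, checks out.

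There is, however, a genuine gap in your last step: you assert that ``the bound $\leq n-k\leq d$ of the statement follows in either case,'' but the statement defines $k=\max\{k_P,k_Q\}$, so $n-k\leq n-k_P$, and an upper bound of $n-k_P$ does \emph{not} imply an upper bound of $n-k$. Your two separate applications only yield $\big\vert\iCLM(L,\ell)-\iCLM(L_0,\ell)\big\vert\leq n-\min\{k_P,k_Q\}$ uniformly in $L\in\{L_P,L_Q\}$, not $n-\max\{k_P,k_Q\}$. In fact the literal statement is false: taking $Q=\{\gamma(a)\}$ one gets $L_Q=L_0$, hence $k_Q=\dim L_0=n$, $k=n$ and $n-k=0$, so the proposition would force $\iCLM(L_P,\ell)=\iCLM(L_0,\ell)$ for every admissible $P$, which already fails for Riemannian geodesics where focal and conjugate counts differ. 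The statement must intend $k=\min\{k_P,k_Q\}$ (or simply the relevant one of $k_P,k_Q$ for the chosen $L$); with that correction your argument is complete, but you should have detected and flagged the discrepancy rather than claiming the stated inequality ``follows in either case.''
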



\subsection{Simple Mechanical systems and mechanical focal points}\label{subsec:simple-mechanical-systems}

This final section is devoted to study  the so-called $P$-kinetic focal and conjugate points in the case of simple mechanical systems  and to derive some interesting estimates relating the qualitative and variational  behavior of orbits in some singular Lagrangian systems.

In this paragraph we stall by recalling some well-known facts and to fix our notation. The main references are \cite{Sma70a, Sma70b, Pin75} and references therein. \\

\begin{defn}
Let $(M,g)$ be a finite dimensional Riemannian manifold and $V:M \to \R$ be a smooth function. The triple $(M,g,V)$ is called a {\em simple mechanical system\/}. The manifold $M$ is 	called the {\em configuration space\/} and its tangent bundle $TM$ is usually called the {\em state space\/}. A point in $TM$ is a {\em state\/} of the mechanical system which gives the position and the velocity. The {\em kinetic energy\/} $K$ of the simple mechanical system is the function 
\begin{equation}\label{eq:kinetic}
K: TM \to \R \textrm{ defined by } K(q, v)\=\dfrac12 \norm{v}_g^2\quad \forall\, (q,v) \in TM.
\end{equation}
The smooth function $V$ is called the {\em potential energy (function)\/} of the system  and finally the {\em total energy function\/}
\begin{equation}\label{eq:total-energy}
E: TM \to \R \textrm{ defined by } E(q, v)\=\dfrac12 \norm{v}_g^2+ V(q)\quad \forall\, (q,v) \in TM.
\end{equation}
\end{defn}
\begin{note}
Everywhere in the paper we shall  denote 	 by $V$ the potential energy and by $U$ the potential function and we recall that $V=-U$.
\end{note}

\begin{ex}{\bf (The $n$-body problem)\/}
	Consider $n$ point masses particles ({\em bodies\/}) with masses $m_1, \ldots ,m_n \in \R^+$  moving in the $d$-dimensional Euclidean space $E^d$. So the positions of the bodies is described by the vector $q=(q_1, \ldots, q_n)\in (E^d)^n$. The kinetic energy is 
	\begin{equation}\label{eq:kinetic-n-bodies}
	K(q,v)\=\dfrac12\sum_{i=1}^n \langle m_i v_i, v_i\rangle \qquad \forall \,(q,v) \in (E^d)^n \times (E^d)^n.
	\end{equation}
	Clearly the kinetic energy is induced by the Riemannian metric $\langle cdot, \cdot \rangle_M $ on $(E^d)^n$ defined by 
	\[
	\langle v, w\rangle_M = \sum_{i=1}^n \langle m_i v_i,w_i \rangle \qquad \forall\, v,w \in (E^d)^n.
	\]
	The $n$-bodies moves under the influence of the Newtonian potential energy defined by 
	\begin{equation}\label{eq:Newtonial-potential-function}
		V(q_1, \dots q_n)=-\sum_{i<j}\dfrac{m_im_j}{\norm{q_i-q_j}}.
	\end{equation}
The function $V$ is singular at the {\em collision set\/} defined by 
\begin{equation}
\Delta\=\Set{(q_1, \ldots, q_n)\in (E^d)^n| q_i=q_j \textrm{ for some } i \neq j}.
\end{equation}
Then $V$ is a smooth function on $M\= (E^d)^n\setminus \Delta$ thus defining a simple dynamical system $(M,K,V)$.
\end{ex}
\begin{defn}
	A {\em physical path (orbit, trajaectory)\/} of a simple mechanical system $(M,g,V)$ is a smooth path $\gamma$ in $M$ satisfying the Newton Equation 
	\begin{equation}\label{eq:Newton}
		(D/dt)\gamma'= -\nabla_g V(\gamma)
	\end{equation}
	where $D/dt$ denotes the covariant derivative relative of the Levi-Civita connection $D$ of the Riemannian metric $g$ and where $\nabla_g$ denotes the gradient defined by $g$.
\end{defn}
\begin{rem}
If $V=0$ then the physical path are just geodesics of the Riemannian manifold.	Moreover if $g$ is the Euclidean metric, then the left-hand side of Equation \eqref{eq:Newton} reduces to $\gamma''$ and the gradient $\nabla_g$ appearing in the right-hands side of that equation is the usual gradient. 
\end{rem}
By the conservation law of the total energy function along a physical path and since in the Riemannian world the kinetic energy is non-negative\footnote{
This fact is not longer true, in general,  on semi-Riemannian manifolds having non trivial signature (for instance Lorentzian manifolds).} a physical path of total energy $h \in \R$ must lie in the set 
\begin{equation}
	\overline M\=\Set{q \in M| V(q) \leq h},
\end{equation}
where $\overline M$ denotes the topological closure of the set 
\begin{equation}
	 M\=\Set{q \in M| V(q) < h}
\end{equation}
usually called the  {\em $h$-configuration space\/} or the {\em Hill's region\/}. If $h$ is a regular value of $V$, then $\overline M$ is a smooth manifold with boundary 
\begin{equation}\label{eq:boundary-hill}
\partial M\=\Set{q \in M| V(q)=h}. 
\end{equation}
The {\em Jacobi metric\/} $g$ corresponding to the value $h$ of a simple mechanical system $(M,g,V)$ is given by 
\begin{equation}\label{eq:Jacobi-metric}
g(q)\= 2[h-V(q)] \, g(q).	
\end{equation}	
\begin{rem}
	We observe that $g$ defines a honest Riemannian metric on $M$ which degenerate on $\partial M$. 
\end{rem}
The next result, which relates the physical paths of energy $h$ and the geodesics on the Hill's region with respect to the Jacobi metric, goes back to Jacobi. 
\begin{prop}{\bf (Jacobi)\/}
The physical paths of $(M,g,V)$ of total energy $h$ are, up to time re-parametrization, geodesics of the Riemannian manifold $(M, g)$.	
\end{prop}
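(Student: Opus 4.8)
The statement is the classical Maupertuis--Jacobi principle. Throughout I denote by $\bar g\=2(h-V)\,g$ the Jacobi metric of \eqref{eq:Jacobi-metric}, keeping the name $g$ for the original metric, and I work on the open Hill's region $M=\{V<h\}$, where $\bar g$ is a genuine Riemannian metric. The plan is to establish the correspondence in two steps, first a variational remark explaining \emph{why} it holds and then a direct change-of-variables computation that produces the explicit reparametrisation. For the variational step I would recall that, by \eqref{eq:Newton}, a physical path is a critical point with fixed endpoints of the Lagrangian action $\int_0^{T}\big[\tfrac12\norm{\dot\gamma}_g^2-V(\gamma)\big]\,dt$, and that along it the total energy $E=\tfrac12\norm{\dot\gamma}_g^2+V$ is conserved. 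Restricting to the level $E\equiv h$ and using $\norm{\dot\gamma}_g^2=2(h-V(\gamma))$, the abbreviated action $\int\norm{\dot\gamma}_g^2\,dt$ becomes $\int\sqrt{2(h-V)}\cdot\norm{\dot\gamma}_g\,dt$, which is exactly the $\bar g$--length functional of the curve $\gamma$. Since the critical points with fixed endpoints of a Riemannian length functional are precisely the unparametrised geodesics, physical paths of energy $h$ and $\bar g$--geodesics coincide as point sets, i.e. up to reparametrisation. To avoid making the passage between critical points of the energy- and of the length-functionals fully rigorous, I would then reobtain this, together with the explicit reparametrisation, by a direct computation.

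For the computational step I would write $\bar g=e^{2\phi}g$ with $e^{2\phi}=2(h-V)>0$ on $M$, so $\nabla_g\phi=-\dfrac{\nabla_g V}{2(h-V)}$, and use the standard formula relating the Levi--Civita connection $\bar D$ of a conformal metric to $D$: along any curve $c$,
\[
\frac{\bar{D}}{d\sigma}\,c'=\frac{D}{d\sigma}\,c'+2\,g(\nabla_g\phi,c')\,c'-\norm{c'}_g^2\,\nabla_g\phi .
\]
Given a physical path $\gamma$ of energy $h$ in physical time $t$, I substitute $c(\sigma)\=\gamma(t(\sigma))$ with the reparametrisation $t(\sigma)$ still free, so $c'=t'\dot\gamma$ and $\dfrac{D}{d\sigma}c'=t''\dot\gamma+(t')^2\dfrac{D}{dt}\dot\gamma$, where $t'=dt/d\sigma$. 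Feeding in Newton's equation $\dfrac{D}{dt}\dot\gamma=-\nabla_g V$ and the energy relation $\norm{\dot\gamma}_g^2=2(h-V)$, the two terms containing $\nabla_g V$ cancel identically --- this cancellation is the whole point of the weight $2(h-V)$ in \eqref{eq:Jacobi-metric} --- and there remains
\[
\frac{\bar{D}}{d\sigma}\,c'=\Big(t''+(t')^2\,\tfrac{d}{dt}\log\!\big(2(h-V(\gamma))\big)\Big)\,\dot\gamma .
\]
Hence $c$ is an affinely parametrised $\bar g$--geodesic exactly when $\dfrac{d}{d\sigma}\Big(\log t'+\log\!\big(2(h-V(\gamma))\big)\Big)=0$, i.e. when $\dfrac{dt}{d\sigma}=\dfrac{\mathrm{const}}{2\,(h-V(\gamma(t)))}$; integrating this first-order ODE gives the sought reparametrisation. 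For the converse I would start from a $\bar g$--geodesic $c$, use $\norm{c'}_{\bar g}\equiv E_0$ constant to get $\norm{c'}_g^2=E_0/\big(2(h-V)\big)$, and undo the same substitution via $d\sigma=2(h-V)\,dt/\sqrt{E_0}$: the resulting curve satisfies $\norm{\dot\gamma}_g^2=2(h-V)$, hence has total energy $h$, and obeys \eqref{eq:Newton} by reading the displayed identity backwards.

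The step I expect to be the main obstacle is the behaviour at the boundary. The substitution above is legitimate only where $2(h-V)\neq0$, so the delicate point is $\partial M=\{V=h\}$: there $\bar g$ degenerates and a physical path of energy $h$ has a turning point $\dot\gamma=0$. One must then check that a $\bar g$--geodesic still reaches $\partial M$ in finite $\bar g$--arclength (the weight $\sqrt{2(h-V)}$ vanishes there and the relevant integral converges) while the reparametrisation $t(\sigma)$ becomes singular, and adopt the appropriate continuation/reflection convention at such points. Away from turning points the correspondence is a plain diffeomorphism of parameters, and the identities above close the argument in both directions.
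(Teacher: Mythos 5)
Your proof is correct, and it is worth noting that the paper offers no proof at all of this proposition: it is stated as a classical result (Jacobi/Maupertuis) with the surrounding references to Smale and Pin, so there is nothing to compare your argument against except the literature. Your computation is the standard one and it checks out: with $\bar g=e^{2\phi}g$, $e^{2\phi}=2(h-V)$, the conformal-change formula gives
\[
\frac{\bar D}{d\sigma}c'=\frac{D}{d\sigma}c'+2g(\nabla_g\phi,c')c'-\norm{c'}_g^2\nabla_g\phi,
\]
and after substituting $c'=t'\dot\gamma$, Newton's equation and the energy relation $\norm{\dot\gamma}_g^2=2(h-V)$, the term $-(t')^2\nabla_gV$ from Newton cancels against $-(t')^2\norm{\dot\gamma}_g^2\nabla_g\phi=+(t')^2\nabla_gV$, leaving a multiple of $\dot\gamma$ exactly as you display; the resulting ODE $t'\cdot 2(h-V(\gamma))=\mathrm{const}$ identifies $\sigma$ (up to an affine change) with the $\bar g$-arclength, since $ds_{\bar g}=\sqrt{2(h-V)}\,\norm{\dot\gamma}_g\,dt=2(h-V)\,dt$ along an energy-$h$ orbit. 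The converse direction is also sound. You are also right to restrict to the open Hill's region and to flag the degeneration of $\bar g$ on $\partial M$ as the only delicate point; the proposition as stated (and as used later in the paper for the Kepler problem) only concerns the interior, so this caveat does not affect the claim. Two trivial slips: the normalisation should read $\norm{c'}_{\bar g}^2\equiv E_0$ (not $\norm{c'}_{\bar g}\equiv E_0$) to be consistent with $\norm{c'}_g^2=E_0/\big(2(h-V)\big)$, and your opening variational paragraph is only a heuristic for the length-versus-energy correspondence --- but you acknowledge this and the subsequent direct computation makes it unnecessary.
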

We now consider  the configuration space $M$ to be the Euclidean plane $E^2$ endowed with a polar coordinate system $(r,\theta)$. Take the origin to be the center of central force so that the potential energy $V$ of the problem depends only upon $r$ (thus is $\theta$ independent). We assume that the particle has mass $m=1$ so that the kinetic energy is $K(q,v)=\norm{v}^2/2$ for all $v \in E^2$. The Jacobi metric of this simple mechanical system in polar coordinates is  given by 
\[
g\= 2[h-V(r)](dr^2+ r^2 d\theta^2). 
\]
The {\em mechanical Gaussian curvature\/} can be easily computed (cfr. \cite[Proposition 2.1]{Pin75}) and it is given by
\begin{equation}
\mathcal K(q)\= \dfrac{1}{4[h-V(r)]}\big[(h-V)(rV')'+r(V')^2\big].
\end{equation}
Assuming that $h$ is a regular value of $V$ meaning that $V'\neq 0$ on the boundary ring 
\[
\partial M\=\Set{q \in M| V({ \norm{q}})=h}\neq \emptyset,
\]
then by continuity it readily follows  the following result. 
\begin{lem}{\bf \cite[Proposition 2.1 \& Proposition 2.2]{Pin75}\/}
Suppose $h$ is a regular value of $V$ and that the boundary ring $\partial M \neq \emptyset$. Then there is an annulus region of the boundary $\partial M$ on which the mechanical	 Gaussian curvature is positive. Moreover $\mathcal K(q) \to +\infty$ as $q \to \partial M $.  
\end{lem}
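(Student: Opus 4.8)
The plan is to read everything off directly from the explicit expression for the mechanical Gaussian curvature, separating the roles of numerator and denominator as $q$ approaches the boundary ring. Write $r=\norm q$ and let $N(r)\=(h-V(r))\,(rV'(r))'+r\,(V'(r))^2$ denote the numerator, so that
\[
\mathcal K(q)=\frac{N(r)}{4\,[h-V(r)]}.
\]
The whole argument is local near $\partial M$ and rests only on the regular-value hypothesis together with the continuity of the second-order data of $V$.

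The first step is to extract the consequences of the two hypotheses. Since $V$ depends only on $r$, the boundary ring $\partial M=\{q:V(\norm q)=h\}$ is a union of circles $\{\norm q=r_0\}$ with $V(r_0)=h$, each at positive radius $r_0>0$ (a non-empty ring is a circle of positive radius; in the Kepler situation this is also forced by the singularity of $V$ at the origin). The assumption that $h$ is a regular value of $V$ is, by the discussion preceding the statement, precisely the condition $V'(r_0)\neq 0$ at every such $r_0$. Evaluating the numerator on the boundary ring then gives $N(r_0)=0\cdot (r_0V'(r_0))'+r_0\,(V'(r_0))^2=r_0\,(V'(r_0))^2>0$. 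Because $V$ is smooth on $M$, the map $r\mapsto N(r)$ is continuous, hence $N>0$ on an open neighbourhood of $\partial M$. Intersecting this neighbourhood with the Hill region $M=\{V<h\}$ produces an annular region $\mathcal A$ adjacent to $\partial M$ on which $N>0$ and $h-V>0$ simultaneously, so that $\mathcal K=N/(4[h-V])>0$ on $\mathcal A$; this is the first assertion. For the divergence statement, let $q\to\partial M$, i.e. $r\to r_0$ while remaining in $M$, so that $h-V(r)\to 0^+$; by continuity $N(r)\to N(r_0)=r_0(V'(r_0))^2>0$, and therefore $\mathcal K(q)=N(r)/(4[h-V(r)])\to+\infty$.

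There is no genuine obstacle here. The only points that deserve a word of care are (i) that the boundary ring sits at positive radius, so that the term $r(V')^2$ is bounded away from zero there and $N(r_0)>0$ really holds, and (ii) phrasing the neighbourhood in the first assertion so that it refers to its intersection with $M$, where $h-V>0$ and where $\mathcal K$ is defined in the first place. I note in passing that the argument is insensitive to the precise power of $h-V$ occurring in the denominator of the displayed formula for $\mathcal K$: only the sign of $h-V$ near $\partial M$ and the strict positivity of $N$ on $\partial M$ are used, so the reasoning carries over verbatim to any such normalisation.
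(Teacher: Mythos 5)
Your proof is correct and follows exactly the route the paper intends: the text preceding the lemma reduces everything to the remark that $V'\neq 0$ on the boundary ring together with continuity, and your computation $N(r_0)=r_0\,(V'(r_0))^2>0$ plus $h-V\to 0^+$ inside the Hill region is precisely that argument made explicit (the paper itself defers the details to Pin's Propositions 2.1 and 2.2). Your side remark that the conclusion is insensitive to the power of $h-V$ in the denominator is also well taken, since the displayed formula for $\mathcal K$ appears to carry a normalisation inconsistent with the Kepler example given just below it.
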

\subsubsection*{The planar Kepler problem}
In polar coordinates the Jacobi metric  for the planar Kepler problem is 
\begin{equation}
	g= 2\left(h+\dfrac{1}{r}\right)(dr^2+r^2d\theta^2).
\end{equation}
\begin{rem}
As recently observed by Montgomery in \cite[Section 4]{Mon18}, in the particular case of zero energy $h=0$ it reduces to 
\[
g_0=2\left(\dfrac{dr^2}{r}+d\theta^2\right)
\]
and by setting $\rho=2r^{1/2}$ it can be written  as follows 
\[
g_0= d\rho^2+ \dfrac{\rho^2}{4}d\theta^2
\]
which is the metric of {\em cone over a circle of radius $1/2$\/}. 
\end{rem}
In the standard planar Kepler problem, the mechanical Gaussian curvature is 
\begin{equation}
\mathcal K(r)= -\dfrac{h}{4(1+rh)^3}.
\end{equation}
In particular we get that 
\[
\begin{cases}
h>0 \quad \Rightarrow \quad \mathcal K(r)<0 &\textrm{{\bf (hyperbolic orbits)\/} }\\
h=0 \quad \Rightarrow \quad \mathcal K(r)=0 &\textrm{{\bf (parabolic orbits)\/} } \\
 h<0 \quad \Rightarrow \quad \mathcal K(r)>0&\textrm{{\bf (elliptic orbits)\/} }.	
\end{cases}
\]
In the two dimensional case the mechanical Jacobi field, reduces to 
\begin{equation}\label{eq:kepler}
	\dfrac{d^2 J}{ds^2}+ \mathcal K(s) J=0
\end{equation}
where $s$ denotes the Jacobi arc-length.  Since $|\mathcal K|\geq |h|/4$, and as a direct consequence of Proposition~\ref{thm:comparison-MS}, we get the following. 
\begin{thm}
Let $\gamma$ be a Keplerian ellipse. Then the first  conjugate point occurs at Jacobi distance less than 
\[
2\dfrac{\pi}{\sqrt{|h|}}.
\]
\end{thm}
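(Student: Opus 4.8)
The plan is to deduce the statement from the comparison principle for Morse--Sturm systems of Proposition~\ref{thm:comparison-MS}, which in the one--dimensional situation ($n=1$) relevant here is nothing but the classical Sturm comparison theorem applied to the mechanical Jacobi equation \eqref{eq:kepler}. The first step is the elementary phase--space observation that a Keplerian ellipse of (necessarily negative) energy $h$ lies \emph{strictly} inside the Hill region $\{\,0<r<1/|h|\,\}$: its semi--major axis is $a=-1/(2h)=1/(2|h|)$, so its aphelion radius equals $a(1+e)<2a=1/|h|$ because the eccentricity satisfies $0\le e<1$. Hence $0<1+rh<1$ everywhere along the orbit, and therefore, along the corresponding Jacobi geodesic parametrized by its Jacobi arclength $s$, one has the \emph{strict} pointwise bound
\[
\mathcal K(s)=\frac{|h|}{4\,(1+rh)^{3}}>\frac{|h|}{4},
\]
which sharpens the inequality $|\mathcal K(s)|\ge|h|/4$ quoted before the statement.

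Next I would introduce the constant--curvature comparison model $\ddot{\bar J}+\tfrac{|h|}{4}\bar J=0$ --- the Jacobi equation of a surface of constant Gaussian curvature $|h|/4$, e.g.\ the round sphere of radius $2/\sqrt{|h|}$ --- whose solution $\bar J(s)=\sin\!\big(\tfrac{\sqrt{|h|}}{2}\,s\big)$ with $\bar J(0)=0$ has its next zero exactly at $s=2\pi/\sqrt{|h|}$. Writing \eqref{eq:kepler} as a Morse--Sturm system with $P\equiv 1$, $R(s)=-\mathcal K(s)$ and the model with $P\equiv 1$, $\bar R\equiv-|h|/4$, the bound $\mathcal K(s)\ge|h|/4$ is exactly the monotonicity hypothesis of Proposition~\ref{thm:comparison-MS}; combined with Theorem~\ref{thm:index} (Dirichlet boundary conditions, $c=n=1$) this says that over every interval $[0,S]$ the Jacobi field along $\gamma$ has at least as many zeros in $(0,S)$ as $\bar J$ does. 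Running the final step directly: the standard Sturm comparison theorem for \eqref{eq:kepler} versus $\ddot{\bar J}+\tfrac{|h|}{4}\bar J=0$ on $[0,2\pi/\sqrt{|h|}]$ forces the solution $J$ of \eqref{eq:kepler} with $J(0)=0$ to vanish again at some $s_1\in(0,2\pi/\sqrt{|h|})$, the key point being that $s_1$ lands in the \emph{open} interval precisely because the coefficient inequality $\mathcal K(s)>|h|/4$ is strict. Since $s_1$ is the first conjugate instant of $\gamma$ (zeros of scalar Jacobi fields are simple, so there is no multiplicity bookkeeping to do), this is exactly the asserted bound $s_1<2\pi/\sqrt{|h|}$.

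Two minor points would remain. First, one needs the Jacobi geodesic to be defined for an arclength of at least $2\pi/\sqrt{|h|}$; this is automatic since the Keplerian ellipse is a \emph{closed} geodesic of the Jacobi metric, so \eqref{eq:kepler} may be integrated for as long as needed, irrespective of the Jacobi length of a single loop. Second --- and this is the one genuinely indispensable hypothesis --- one uses that the orbit is \emph{strictly} interior to the Hill region, a property that fails precisely for the degenerate rectilinear collision orbit $e=1$, which is exactly why the statement is restricted to Keplerian \emph{ellipses}. Accordingly, I expect the only real subtlety of the proof to be ensuring that the strict curvature inequality is invoked in the right place, namely to place the first conjugate instant in the open interval $(0,2\pi/\sqrt{|h|})$ rather than merely in $[0,2\pi/\sqrt{|h|}]$.
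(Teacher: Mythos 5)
Your proof is correct and follows essentially the same route as the paper: a Sturm/Morse--Sturm comparison (Proposition~\ref{thm:comparison-MS}) of the mechanical Jacobi equation \eqref{eq:kepler} against the constant-curvature model $\ddot{\bar J}+\tfrac{|h|}{4}\bar J=0$, i.e.\ the round sphere of radius $2/\sqrt{|h|}$ whose first conjugate distance is $2\pi/\sqrt{|h|}$. You are in fact slightly more careful than the paper, which only invokes the non-strict bound $|\mathcal K|\ge |h|/4$ and hence literally yields only a first conjugate distance $\le 2\pi/\sqrt{|h|}$; your observation that a Keplerian ellipse is strictly interior to the Hill region, so that $\mathcal K>|h|/4$ pointwise and the strict form of the Sturm comparison applies, is exactly what is needed to obtain the strict inequality asserted in the statement.
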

\begin{proof}
In fact, since $|\mathcal K(s)|	\geq \dfrac{|h|}{4}$, by setting $R_1(s)=|\mathcal K(s)|$ and  $R_2(s)\= |h|\Id$ and by  using Proposition~\ref{thm:comparison-MS},  we get that the associated block diagonal  matrices $B_1$ and $B_2$ are ordered, meaning that pointwise we have $B_1(s) \leq B_2(s)$ for every  $s \in [0,1]$. Thus, by invoking once again Proposition~\ref{thm:comparison-MS} and Theorem~\ref{thm:index}, we have
\[
	\iMorse{L_D}(B_1) \geq \iMorse{L_D}(B_2).
	\]
	Since  crossing instants (or a verticality moments) correspond to conjugate points. (Cfr. \cite{MPP05} and references therein for further details), the result follows once observed that 
 $|\mathcal K|\geq |h|/4$ and $|h|/4$ is the Gaussian curvature of the sphere of radius $2/\sqrt{|h|}$. This concludes the proof. 
\end{proof}


\appendix

\section{A symplectic excursion on the Maslov index}\label{sec:Maslov}

The purpose of this Section is to provide the basic definitions, properties and symplectic  preliminaries used in the paper. We recall the basic definition, the main 
properties  of the intersection number for curves of Lagrangian subspaces with respect to a  distinguished one and we fix our notation.   Our basic references are \cite{RS93, CLM94, LZ00, MPP05,MPP07, HS09, BJP14, BJP16, PWY19}.

\subsection{Symplectic preliminaries and the Lagrangian Grassmannian}\label{subsec:preliminaries}

A finite dimensional (real) {\em symplectic vector space\/}, is a pair  $(V, \omega)$, where $V$ is a (real, even dimensional) 
vector space, and $\omega: V \times V \to \R$ is an antisymmetric non-degenerate bilinear form on $V$. A {\em complex structure\/} on the real vector space $V$ is an automorphism  $J: V \to V$ such that  $J^2=-\Id$. With such a structure $V$ becomes a complex vector space. 
We denote by  $\Sp(V, \omega)$ the {\em symplectic group \/} of  $(V,\omega)$ which is the  closed Lie subgroup of the general linear group $\GL(V)$ consisting of all isomorphisms that preserve $\omega$. The  Lie algebra $\ssp(V,\omega)$ of $\Sp(V, \omega)$ consists of all endomorphisms $X: V \to V$ such that $\omega(X\cdot, \cdot)$  is a symmetric bilinear form on $V$, i.e. $ \omega(Xv, w)=\omega(Xw,v)$, for all    $v, w\, \in V$. 
 Here and throughout, unless different stated, $(V,\omega)$ denotes a $2n$-dimensional (real) symplectic space.
 
We start by recalling some classical definition and notation that we will use throughout the paper. First of all, a (linear) subspace   $I \subset V$ is termed {\em isotropic\/} if the restriction of $\omega$ on $I$ vanishes identically. Now, given an  isotropic subspace $I$ of the symplectic Euclidean space $(V, \langle \cdot, \cdot \rangle, \omega)$, 
we shall identify the quotient space $I^\omega/I$ with the orthogonal complement $V_I$ of $I$ in $I^\omega$ and we 
call $V_I$ the {\em symplectic reduction of $V$ modulo $I$.\/} 
Thus, by definition:
\begin{equation}\label{3.1}
V_I\,:=\, I^\omega \cap I^\perp\,=\, (J I)^{\perp }\cap I^\perp 
\end{equation}
Notice that if $I$ is isotropic, also $JI$ is isotropic. Moreover  $V_I \, = \, V_{JI}.$  This follows from Equation~\eqref{3.1} 
and the orthogonality relations between
$\omega $ and $\perp.$ Moreover 
\begin{equation}\label{eq:ortogonalisimplettici}
V_I^{\perp}\,=\,[I^{\perp}\cap (JI)^{\perp}]
^{\perp}\,=\, I\oplus JI.
\end{equation}
We observe that $V_I$ is a symplectic space since  $V_I\cap V_I^\omega=\{0\}$.
Thus, we get the symplectic decomposition of $V$:
$ V=V_I \oplus V_I^\perp.$
A special class of isotropic subspaces is played by the so-called {\em Lagrangian subspaces.\/} More precisely, a  maximal (with respect to the inclusion) isotropic subspace of $(V,\omega)$ is termed a {\em Lagrangian subspace\/}. We denote by $\Lagr(V, \omega)$  (or in shorthand notation  by $\Lagr$) the  collection of all Lagrangian subspaces of $V$. So, if  $(V,\omega)$ is a $2n$-dimensional (real) symplectic space,  a  {\em Lagrangian  subspace\/} of $V$ is an $n$-dimensional subspace $L \subset V$ such that $L =  L^\omega$ where $L^\omega$ denotes the {\em symplectic orthogonal\/}. We denote by $ \Lagr= \Lagr(V,\omega)$ the {\em Lagrangian Grassmannian of 
$(V,\omega)$\/}, namely the set of all Lagrangian subspaces of $(V, \omega)$; thus $\Lagr(V,\omega)\=\Set{L \subset V| L= L^{\omega}}.$
\begin{note}
Here and throughout the Lagrangian Grassmannian of the standard symplectic space will be denoted by $\Lagr(n)$. Moreover, we set  
\begin{equation}
L_D=  \R^n\times \{0\} \subset \R^n \times \R^n
\quad \textrm{ and } \quad 
L_N= \{0\} \times \R^n \subset \R^n \times \R^n
\end{equation}
and we shall refer to $L_D$ as the {\em Dirichlet (or horizontal) Lagrangian subspace\/} whilst  to $L_N$ as the {\em Neumann (vertical) Lagrangian subspace.\/}
\end{note}

%

 In this subsection we recall some basic facts on the differentiable structure of $\Lagr(V,\omega)$. We start to observe that  $\Lagr(V,\omega)$ has the structure of a compact real-analytic submanifold of the Grassmannian of all $n$-dimensional subspaces of $V$. Moreover the dimension of $\Lagr(V,\omega)$  is $\frac12 n(n+1)$ and an atlas on $\Lagr$ is given as follows.
 
Given a   {\em Lagrangian decomposition\/} of $(V, \omega)$ namely a pair $(L_0, L_1)$ of Lagrangian subspaces of $V$ 
with $V=L_0 \oplus L_1$,  we denote by $\Lagr^0(L_1)$ the open and dense subset of $\Lagr(V, \omega)$ consisting  of all Lagrangian  subspaces of $V$ that are transversal to $L_1$. To any    Lagrangian decomposition $(L_0, L_1)$ of $V$ it remains a well-defined  bijection 
\begin{equation}\label{eq:carte}
  Q(L_0,L_1):\Lagr^0(L_1)\to \Bsym(L_0) \ \textrm{ defined  by } \  
Q(L_0,L_1)(L)\=Q(L_0,L_1;L)\= \omega(\cdot, T\cdot)\big\vert_{L_0 \times L_0}
\end{equation}
where $T: L_0 \to L_1$ is the unique linear map whose graph in $V$ is represented by $L$.\footnote{We observe that this map coincides, up to a sign with, the one defined in \cite[Equation 2.3]{Dui76} or with the local chart $\varphi_{L_0,L_1}(L)$ given by authors in \cite[Section 2]{DDP08} or in \cite[Section 2]{JP09}. 
However our choice is coherent with the crossing forms defined through $Q$  in  \cite[Section 1]{RS93}, \cite[Equation 2 \& Remark 3.1]{ZWZ18} with \cite[Section 3]{LZ00} and \cite{CLM94}.
}
 We also observe that $ \ker \big(Q(L_0,L_1;L)\big)= L \cap L_0,$ for all $L \in \Lambda^0(L_1)$.  Moreover, as proved by the author in \cite[Proposition 2.1]{Dui76}, the collection of all $Q(L_0,L_1)$ where the pair $(L_0, L_1)$ runs all over the Lagrangian decomposition of $(V,\omega)$ form a differentiable atlas for $\Lagr(V,\omega)$.
 For any distinguished  $L_0 \in \Lagr$, 
let $ \Lagr^k(L_0) \= \Set{L \in \Lagr(V,\omega) | \dim\big(L \cap L_0\big) =k } \qquad k=0,\dots,n.$
We recall that $\Lagr^k(L_0)$ is a real compact, connected submanifold of codimension  $k(k+1)/2$. The topological closure 
of $\Lagr^1(L_0)$  is the {\em Maslov cycle\/} that can be also described as follows. 
\begin{defn}\label{def:universal-maslov-cycle}
We term {\em Maslov cycle with vertex at $L_0$\/} or  {\em train with vertex $L_0$\/}(by using Arnol'd terminology \cite[Section 2]{Arn86}),  the algebraic (stratified) variety defined by 
\begin{equation}\label{eq:univ-Maslov-cycle}
\Sigma(L_0) \= \bigcup_{k=1}^n \Lagr^k(L_0).
\end{equation}
\end{defn}
The top-stratum $\Lagr^1(L_0)$ is co-oriented meaning that it has a 
transverse orientation. To be 
more precise, for each $L \in \Lagr^1(L_0)$, the path of Lagrangian subspaces 
$(-\delta, \delta) \mapsto e^{tJ} L$ cross $\Lagr^1(L_0)$ transversally, and as 
$t$ increases the path points to the transverse direction. Thus  the Maslov cycle is two-sidedly embedded in 
$\Lagr(V,\omega)$ and,  based on the topological properties of the Lagrangian  Grassmannian manifold,  it is possible to define a fixed endpoints homotopy invariant $\iCLM$-which is a generalization of the classical notion of  {\em Maslov index\/} for paths of Lagrangian subspaces.


\subsection{On the CLM-index: definition and computation}

Our basic references for this subsection are the beautiful papers \cite{RS93, CLM94, LZ00}.

We let $\mathscr P([a,b]; \R^{2n})$ the space of continuous maps 
\[
f: [a,b] \to \Set{\textrm{pairs of Lagrangian subspaces in } \R^{2n}}
\]
equipped with the compact-open topology and we recall the following definition. 
\begin{defn}\label{def:Maslov-index}
The {\em CLM-index\/} is the unique integer valued function  
\[
 \iCLM: \mathscr P([a,b]; \R^{2n}) \to \Z
\]
which satisfies Properties I-(VI) in \cite{CLM94}.
\end{defn}

For further reference we refer the interested reader to \cite{CLM94} and references therein.  Following authors in \cite[Section 3]{LZ00}, and references therein, let us now introduce the notion of crossing form that gives an efficient  way for computing the intersection indices   in the Lagrangian Grassmannian context.  

Let $\ell$ be a $\mathscr C^1$-curve of Lagrangian subspaces 
such that 
$\ell(0)= L$ and $\dot \ell(0)=\widehat L$. Now, if  $W$ is a fixed Lagrangian subspace transversal to $L$. For  $v \in L$ and  small enough $t$, let $w(t) \in W$ be such that $v+w(t) \in \ell(t)$.  Then the  form 
\begin{equation}\label{eq:forma-Q}
 Q(L, \widehat L)[v]= \dfrac{d}{dt}\Big\vert_{t=0} \omega \big(v, w(t)\big)
\end{equation}
is independent on the choice of $W$. 
\begin{defn}\label{def:crossing-form}
Let $t \mapsto \ell(t)=(\ell_1(t), \ell_2(t))$ be a map in  	 $\mathscr P([a,b]; \R^{2n})$. For $t \in [a,b]$, the crossing form is a quadratic form defined by 
\begin{equation}\label{eq:crossings}
\Gamma(\ell_1, \ell_2, t)= Q(\ell_1(t), \dot \ell_1(t))- 	Q(\ell_2(t), \dot \ell_2(t))\Big\vert_{\ell_1(t)\cap \ell_2(t)}
\end{equation}
 A {\em crossing instant\/} for the curve $t \mapsto \ell(t)$ is an instant $t \in [a,b]$  such that $\ell_1(t)\cap \ell_2(t)\neq \{0\}$ nontrivially. A crossing is termed {\em regular\/} if the $\Gamma(\ell_1, \ell_2, t)$ is non-degenerate. 
 \end{defn}
 We observe that  if $t$ is a crossing instant, then $
 \Gamma(\ell_1, \ell_2,t)= - \Gamma (\ell_2, \ell_1, t).$
 If  $\ell$ is {\em regular\/} meaning that  it has only regular crossings, then the $\iCLM$-index can be computed through the crossing forms, as follows 
\begin{equation}\label{eq:iclm-crossings}
 \iCLM\big(\ell_1(t), \ell_2(t); t \in [a,b]\big) = \coiMor\big(\Gamma(\ell_2, \ell_1, a)\big)+ 
\sum_{a<t<b} 
 \sgn\big(\Gamma(\ell_2, \ell_1, t)\big)- \iMor\big(\Gamma(\ell_2, \ell_1,b)\big)
\end{equation}
where the summation runs over all crossings $t \in (a,b)$ and $\coiMor, \iMor$  are the dimensions  of  the positive and negative spectral spaces, respectively and $\sgn\= 
\coiMor-\iMor$ is the  signature. 
(We refer the interested reader to \cite{LZ00} and \cite[Equation (2.15)]{HS09}). 

Let $L_0$ be a distinguished Lagrangian and we assume that $\ell_1(t)\equiv L_0$ for every $t \in [a,b]$. In this case we get that the crossing form at the instant $t$ provided in Equation~\eqref{eq:crossings} actually reduce to 
\begin{equation}\label{eq:forma-crossing}
 \Gamma\big(\ell_2(t), L_0, t \big)= Q|_{\ell_2(t)\cap L_0}
\end{equation}
and hence   
\begin{equation}\label{eq:iclm-crossings-2}
 \iCLM\big(L_0, \ell_2(t); t \in [a,b]\big) = \coiMor\big(\Gamma(\ell_2, L_0, a)\big)+ 
\sum_{a<t<b} 
 \sgn\big(\Gamma(\ell_2, L_0, t)\big)- \iMor\big(\Gamma(\ell_2, L_0,b)\big)
\end{equation}
\begin{rem}
As authors proved in \cite{LZ00} for regular curves of Lagrangian subspaces the Robbin and Salamon index $\iRS$ for path of Lagrangian pairs defined in \cite[Section 3]{RS93} is related to the $\iCLM$-index as follows
the 
half-integer valued function given by
\begin{multline}\label{eq:RS-index-crossing}
 \iRS\big(\ell_1(t), \ell_2(t), t \in [a,b]\big)= \dfrac12 \sgn\big(\Gamma(\ell_1, \ell_2, a)\big)\\+
 \sum_{\substack{t_0 \in ]a,b[
}}
 \sgn\big(\Gamma(\ell_1, \ell_2, t_0)\big)  + \dfrac12 \sgn\big(\Gamma(\ell_1, \ell_2, b)\big).
\end{multline}
Thus, we have: 
\begin{equation}\label{eq:clm-rs}
 \iCLM(\ell_1(t),\ell_2(t);t\in[a,b])=\iRS(\ell_2(t),\ell_1(t); t \in [a,b]) - \dfrac12[h_{12}(b)-h_{12}(a)]
\end{equation}
where $h_{12}(t)\=\dim[\ell_1(t)\cap \ell_2(t)]$. We refer the interested reader to  \cite[Theorem 3.1]{LZ00} for a proof of Equation \eqref{eq:clm-rs}. 
\end{rem}

\begin{rem}
For the sake of comparison 	with the results proven in \cite{JP09} we remark that $\iCLM(L_0, \ell_2)$ can be defined by using the Seifert Van Kampen theorem for groupoids as the unique  $\Z$-valued homomorphism that it is locally defined as difference of the coindices as in  \cite[Equation (2-3)]{JP09}. It is worth noticing  that in that respect the local chart we are considering here is the opposite of the one considered in that paper.   
\end{rem}

A particular interesting situation which often occurs in the applications is the one in which $\ell(t)\=\psi(t)L$ where $\psi \in \mathscr C^1\big([a,b], \Sp(2n)\big)$. Usually, in fact, such a $\psi$ is nothing but the fundamental solution of a  linear Hamiltonian system. 

In this situation, in fact, as direct consequence of Equation \eqref{eq:forma-Q} and Equation \eqref{eq:forma-crossing}, we get that for such a path 
\begin{multline}\label{eq:forma-crossing-utile-simplettici}
 \Gamma\big(\ell(t), L_0, t_0 \big)[v]=	\langle \psi(t_0)\trasp{J_0}\psi'(t_0) v,v \rangle \qquad \forall\, v \in \psi^{-1}(t)\big(\ell(t_0)\cap L\big) \textrm{ or }\\
 \Gamma\big(\ell(t), L_0, t_0 \big)[w]=	\langle \trasp{J_0}\psi'(t_0)\psi^{-1}(t_0) w,w \rangle \qquad \forall\, w \in\ell(t_0)\cap L_0.
\end{multline}
Assuming that $\psi$ is the fundamental solution of the linear Hamiltonian system 
\begin{equation}\label{eq:hamsys-model}
	z'(t)= J_0 B(t) z(t), \qquad t \in [a,b]
\end{equation}
where $t \mapsto B(t)$ is a path of symmetric matrices, then by Equation \eqref{eq:forma-crossing-utile-simplettici}, we get that 
\begin{equation}\label{eq:plus-curve-control}
 \Gamma\big(\ell(t), L_0, t_0 \big)[w]=	\langle B(t_0) w,w \rangle \qquad \forall\, w \in\ell(t_0)\cap L_0.
\end{equation}
\begin{ex}\label{ex:utile}
In this example we compute the crossing form with respect to the Dirichlet and Neumann Lagrangian for a special curve of Lagrangian subspaces in  the symplectic space $(\R^{2n}, \omega_0)$ by using the fact that for any $L \in\Lagr(V,\omega)$, the map $\delta_L: \Sp(V, \omega) \to \Lambda(V, \omega)$ defined by $\delta_L(A)\=AL$ is a real-analytic fibration. 

Let $L_0$ be either the Dirichlet or the Neumann Lagrangian, $\ell:[a,b] \to \Lagr(n)$ be a smooth curve having a crossing instant with $\Sigma(L_0)$ at the instant $t_0 \in (a,b)$. \\ 
{\bf First case: $L_0=L_D$.\/} We assume that $\ell(t_0)$ is transverse to $L_N$ (otherwise it is enough to consider a different Lagrangian decomposition). By the local description of the atlas of the Lagrangian Grassmannian,   $\ell(t_0)$ is a graph of a (symmetric) linear map $A:\R^n \to \R^n$, namely  $\ell(t_0)=\Set{(p,q)\in \R^n \times \R^n| q= Ap}$ and hence 
\[
\ell(t_0)\cap L_D=\Set{(p,q) \in \R^n \times \R^n|q=0, \ p\in \ker A}.
\]
There exists $\varepsilon>0$ sufficiently small and $\psi: (t_0-\varepsilon, t_0+\varepsilon)\to \Sp(2n)$ with $\psi(t_0)=\Id$ such that $\ell(t)=\psi(t)\ell(t_0)$. With respect to the Lagrangian decomposition $L_D \oplus L_N = \R^{2n}$ we can write $\psi(t)$ in the block form as follows
\begin{equation}
\psi(t)\=	\begin{bmatrix}
	a(t) & b(t) \\ c(t) & d(t)	
	\end{bmatrix}.
\end{equation}
By an immediate computation, it follows that the crossing form is given by 
\begin{equation}
\Gamma(\ell, L_D, t_0)[\xi]= \langle p, \dot c(t_0) p\rangle
\end{equation}
where $p \in \ker A$ is the unique vector in $\R^n$ such that $\xi=(p,0)$.\\
{\bf Second case: $L_0=L_N$.\/} We assume that $\ell(t_0)$ is transverse to $L_D$; thus in this case, we can assume that  $\ell(t_0)=\Set{(p,q)\in \R^n \times \R^n| p= Bq}$ and hence 
\[
\ell(t_0)\cap L_N=\Set{(p,q) \in \R^n \times \R^n|p=0, \ q\in \ker B}.
\]
Under the above notation,  it follows that the crossing form is given by 
\begin{equation}
\Gamma(\ell, L_N, t_0)[\eta]= -\langle q, \dot b(t_0) q\rangle
\end{equation}
where $q \in \ker B$ is the unique vector in $\R^n$ such that $\eta=(0,q)$.

\end{ex}
\begin{rem}\label{rmk:comparison}
Before closing this section, one more comment on the Maslov  intersection index defined by author in the quoted paper.  We observe that, for a general Lagrangian path,  the (intersection) Maslov index defined by Arnol'd in \cite[Section 2]{Arn86} (namely $\iAr$) differ from  $\iCLM$ because of the contribution of the endpoints. In the aforementioned paper, author only considered paths of Lagrangian subspaces such that the starting  point  doesn't belong to the train of a  distinguished Lagrangian  $L_0$ whereas the final endpoint coincides with the vertex. However, if we restrict on this particular class of Lagrangian paths and assuming that the Hamiltonian defining these paths through the lifting to the Lagrangian Grassmannian is $L_0$-optical, then we have $
\iCLM(L_0, \ell(t); t \in [0,T])= \iAr(L_0, \ell(t), t \in [0,T])-n$ 
where $\iAr$ denotes the Maslov index defined in \cite[Section 2]{Arn86}. This fact easily follows by observing that the local contribution given by the endpoints to the $\iCLM$ index is through the coindex at the final point and the index of the starting point. 

We also observe that the Lagrangian paths defined by the evolution of  a Lagrangian subspace under the phase flow, have in general, degenerate starting point. Thus, in order to fit with the class of Lagrangian paths defined by Arnol'd  it is natural to parametrize the paths in the opposite direction. However, since the contribution at the end points is different, in the definition of $\iCLM$-index  such a re-parametrization changes the Maslov index not only for a sign changing but also for a correction term which depends upon the endpoints. This fact is pretty much put on evidence in the Sturm-type comparison theorems.
\end{rem}
We close this section by 
recalling some useful 
properties of the $\iCLM$-index. \\
\begin{itemize}
\item[]{\bf Property I (Reparametrization invariance)\/}. Let $\psi:[a,b] \to 
[c,d]$ be a 
continuous and piecewise smooth function with $\psi(a)=c$ and $\psi(b)=d$, then 
\[
 \iCLM\big(L_0, \ell(t);t \in[c,d]\big)= \iCLM(L_0, \ell(\psi(t));t \in [a,b]\big). 
\]
\item[] {\bf Property II (Homotopy invariance with respect to the ends)\/}. For 
any $s \in [0,1]$, 
let $s\mapsto \ell(s,\cdot)$ be a continuous family of Lagrangian paths 
parametrised on $[a,b]$ and 
such that $\dim\big(\ell(s,a)\cap L_0\big)$ and $\dim\big(\ell(s,b)\cap L_0\big)$ 
are constants, then 
\[
 \iCLM\big(L_0, \ell(0,t);t \in [a,b]\big)=\iCLM\big(L_0, \ell(1,t); t \in [a,b]\big).
\]
\item[]{\bf Property III (Path additivity)\/}. If $a<c<b$, then
\[
 \iCLM\big(L_0, \ell(t);t \in [a,b]\big)=\iCLM\big(L_0, \ell(t); t \in [a,c]\big)+ \iCLM\big(L_0, \ell(t); t \in [c,b]\big) 
\]
\item[]{\bf Property IV (Symplectic invariance)\/}. Let $\Phi:[a,b] \to \Sp(2n, \R)$. Then 
\[
 \iCLM\big(L_0, \ell(t);t \in [a,b]\big)= \iCLM\big(\Phi(t)L_0, \Phi(t)\ell(t); t \in [a,b]\big).
\]
\end{itemize}


\subsection{On the triple and H\"ormander index}

A crucial ingredient which somehow measure the difference of the relative Maslov index with respect to two different Lagrangian subspaces is given by the H\"ormader index. Such an index is also related to the difference of the triple index and to its interesting  generalization provided recently by the last author and his co-authors in \cite{ZWZ18}. 
For, we start with  the following definition of the H\"ormander index. 
\begin{defn}\label{def:hormander}(\cite[Definition 3.9]{ZWZ18})
Let $\lambda, \mu \in \mathscr C^0\big([a,b], \Lagr(V,\omega)\big)$ such that 
\[
\lambda(a)=\lambda_1, \quad \lambda(b)=\lambda_2 \quad  \textrm{ and } \quad \mu(a)=\mu_1, \quad \mu(b)= \mu_2.
\]
Then the {\em H\"ormander index\/} is the integer given by 
\begin{multline}
s(\lambda_1, \lambda_2; \mu_1, \mu_2)
\= 
\iCLM\big(\mu_2, \lambda(t); t \in [a,b]\big) - 
\iCLM\big(\mu_1, \lambda(t); t \in [a,b]\big) \\
=
\iCLM\big(\mu(t), \lambda_2; t \in [a,b]\big)- \iCLM\big(\mu(t), \lambda_1; t \in [a,b]\big).
\end{multline}
Compare \cite[Equation (17), pag. 736]{ZWZ18} once observing that we observe that $\iCLM(\lambda,\mu)$ corresponds to $\textrm{Mas}\{\mu,\lambda\}$ in the notation of \cite{ZWZ18}. 
\end{defn}
\paragraph{Properties of the H\"ormander index.}
We briefly recall some well-useful properties of the H\"ormander index.
\begin{itemize}
\item 	$s(\lambda_1,\lambda_2; \mu_1, \mu_2) = -s(\lambda_1,\lambda_2; \mu_2, \mu_1)$
\item $s(\lambda_1,\lambda_2; \mu_1, \mu_2) = 
-s(\mu_1, \mu_2;\lambda_1,\lambda_2) + 
\sum_{j,k \in \{1,2\}}(-1)^{j+k+1}\dim (\lambda_j \cap \mu_k)$.
\item If $\lambda_j\cap \mu_k =\{0\}$ then $s(\lambda_1,\lambda_2; \mu_1, \mu_2) = 
-s(\mu_1, \mu_2;\lambda_1,\lambda_2)$.
\end{itemize}
The H\"ormander index is computable as difference of two indices each one involving  three different Lagrangian subspaces. This index is defined in terms of the local chart representation of the atlas of the Lagrangian Grassmannian manifold, given in Equation \eqref{eq:carte}. 

\begin{defn}\label{def:kashi}
Let $\alpha,\beta,\gamma \in \Lagr(V,\omega)$, $\epsilon\= \alpha \cap \beta + \beta \cap \gamma$ and let $\pi\=\pi_\epsilon$ be the projection in the symplectic reduction of $V$ mod $\epsilon$.   
  We term {\em triple index\/} the integer defined by
\begin{multline}\label{eq:triple}
\itriple(\alpha, \beta, \gamma)\= \coindex Q(\pi \alpha, \pi \beta; \pi \gamma)	+\dim (\alpha \cap \gamma) -\dim (\alpha\cap \beta \cap \gamma)\\
\leq n-\dim (\alpha \cap \beta)-\dim( \beta \cap \gamma) + \dim (\alpha \cap \beta \cap \gamma).
\end{multline}
\end{defn}
\begin{rem}\label{rem:molto-utile-stima}
Definition \ref{def:kashi} is well-posed and we refer the interested reader to \cite[Lemma 2.4]{Dui76} and  \cite[Corollary 3.12 \& Lemma 3.13]{ZWZ18} for further details).  It is worth noticing that $Q(\pi \alpha, \pi \beta; \pi \gamma)$ is a quadratic form on $\pi\alpha$. Being the reduced space $V_\epsilon$  a $2(n-\dim \epsilon)$ dimensional subspace, it follows that inertial indices of  $Q(\pi \alpha, \pi \beta; \pi \gamma)$ are integers between $\{0, \ldots,n-\dim \epsilon\}$.
\end{rem}
\begin{rem}
		It is worth noticing that	for arbitrary Lagrangian subspaces $\alpha,\beta,\gamma$ , $Q(\alpha,\beta,\gamma)$ is well-defined and it is a quadratic form on $\alpha\cap(\beta+\gamma)$.
	Furthermore, we have $n_+Q(\alpha,\beta,\gamma)=n_+ Q(\pi\alpha,\pi\beta,\pi\gamma)$.  So we can also define the triple index as 
	\[
	\itriple(\alpha, \beta, \gamma)\= \coindex Q( \alpha,  \beta;  \gamma)	+\dim (\alpha \cap \gamma) -\dim (\alpha\cap \beta \cap \gamma).
	\]
Authors in \cite[Lemma 3.2]{ZWZ18}  give a useful property for calculating such a quadratic form.
\begin{equation}\label{eq:invariance_Q}
\coindex Q(\alpha,\beta,\gamma)=\coindex Q(\beta,\gamma,\alpha)= \coindex Q(\gamma,\alpha,\beta).
\end{equation}
\end{rem}
We observe that if $(\alpha,\beta)$ is a Lagrangian decomposition of $(V,\omega)$ and $\beta \cap \gamma=\{0\}$ then $\pi$ reduces to the identity and both  terms $\dim (\alpha \cap \gamma)$ and $\dim (\alpha\cap \beta \cap \gamma)$ drop down. In this way the triple index is nothing different from the the quadratic form $Q$ defining the local chart of the atlas of $\Lagr(V,\omega)$ given in Equation \eqref{eq:carte}.  It is possible to prove (cfr. \cite[proof of the Lemma 3.13]{ZWZ18}) that 
\begin{equation}\label{eq:kernel-dim-q-form}
	\dim(\alpha \cap \gamma) -\dim(\alpha \cap \beta \cap \gamma)= \nullity Q(\pi \alpha, \pi \beta; \pi \gamma),
\end{equation}
where we denoted by $\nullity Q$ the nullity (namely the kernel dimension of the quadratic form $Q$).
By summing up Equation \eqref{eq:triple} and Equation \eqref{eq:kernel-dim-q-form}, we finally get 
\begin{equation}\label{eq:triple-coindex-extended}
\itriple(\alpha, \beta, \gamma)= \noo{+}Q(\pi \alpha, \pi \beta; \pi \gamma)
\end{equation}
 where $\noo{+} Q$ denotes the so-called {\em extended coindex\/} or {\em generalized coindex\/} (namely the coindex plus the nullity) of the quadratic form $Q$. (Cfr. \cite[Lemma 2.4]{Dui76} for further details).
\begin{lem}\label{thm:properties}
Let $\lambda \in \mathscr C^1\big([a,b], \Lagr(V,\omega)\big)$. Then, for every $\mu \in \Lagr(V, \omega)$, we have 
\begin{enumerate}
	\item[\textrm{ \bf{(I)} }] $s\big(\lambda(a), \lambda(b); \lambda(a), \mu \big)= -\itriple\big(\lambda(b), \lambda(a),\mu\big)\leq 0$, 
	\item[\textrm{ \bf{(II)} }] $s\big(\lambda(a), \lambda(b); \lambda(b), \mu \big)= \itriple\big(\lambda(a), \lambda(b),\mu\big)\geq 0$.
\end{enumerate}
\end{lem}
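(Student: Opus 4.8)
The plan is to deduce both identities from the description of the H\"ormander index as a difference of triple indices, exactly as it is already used in the proof of Theorem \ref{thm:main-1}: combining Definition \ref{def:hormander} with Proposition \ref{thm:mainli}, for all $\lambda_1,\lambda_2,\mu_1,\mu_2\in\Lagr(V,\omega)$ one has
\[
s(\lambda_1,\lambda_2;\mu_1,\mu_2)=\itriple(\lambda_1,\mu_1,\mu_2)-\itriple(\lambda_2,\mu_1,\mu_2).
\]
(Here one may realize the H\"ormander index along the given curve $\lambda$, since $\lambda(a),\lambda(b)$ are its endpoints.) The two statements of the lemma will then be one-line specializations of this formula.

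The single non-formal ingredient is the vanishing lemma $\itriple(L,L,\mu)=0$ for all $L,\mu\in\Lagr(V,\omega)$, which I would check directly from Definition \ref{def:kashi}: the relevant isotropic subspace is $\epsilon=L\cap L+L\cap\mu=L$, which is Lagrangian, so the symplectic reduction $V_\epsilon=\epsilon^\omega\cap\epsilon^\perp$ is the zero space, the form $Q(\pi L,\pi L;\pi\mu)$ lives on $\{0\}$ and has vanishing coindex, and moreover $\dim(L\cap\mu)=\dim(L\cap L\cap\mu)$; formula \eqref{eq:triple} then yields $0$. I would also record the (immediate) non-negativity $\itriple(\alpha,\beta,\gamma)\geq 0$, which follows from \eqref{eq:triple} because the coindex is non-negative and $\alpha\cap\beta\cap\gamma\subseteq\alpha\cap\gamma$, or equivalently from the identification \eqref{eq:triple-coindex-extended} of $\itriple$ with an extended coindex $\noo{+}Q$.

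With these facts in hand, \textbf{(I)} and \textbf{(II)} are immediate. For \textbf{(I)}, taking $\lambda_1=\lambda(a)$, $\lambda_2=\lambda(b)$, $\mu_1=\lambda(a)$, $\mu_2=\mu$ gives
\[
s\big(\lambda(a),\lambda(b);\lambda(a),\mu\big)=\itriple\big(\lambda(a),\lambda(a),\mu\big)-\itriple\big(\lambda(b),\lambda(a),\mu\big)=-\,\itriple\big(\lambda(b),\lambda(a),\mu\big)\leq 0 .
\]
For \textbf{(II)}, taking instead $\mu_1=\lambda(b)$, $\mu_2=\mu$ (and keeping $\lambda_1=\lambda(a)$, $\lambda_2=\lambda(b)$) gives
\[
s\big(\lambda(a),\lambda(b);\lambda(b),\mu\big)=\itriple\big(\lambda(a),\lambda(b),\mu\big)-\itriple\big(\lambda(b),\lambda(b),\mu\big)=\itriple\big(\lambda(a),\lambda(b),\mu\big)\geq 0 .
\]

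I do not expect any real obstacle once Proposition \ref{thm:mainli} is available; the only step deserving a moment's care is the degenerate reduction underlying $\itriple(L,L,\mu)=0$ — one must verify that $\epsilon$ is all of $L$, so that $V_\epsilon=\{0\}$, and that the residual term $\dim(L\cap\mu)-\dim(L\cap L\cap\mu)$ vanishes identically. As an internal consistency check for (I) one can expand the same H\"ormander index the other way, $s(\lambda(a),\lambda(b);\lambda(a),\mu)=\itriple(\lambda(a),\lambda(b),\mu)-\itriple(\lambda(a),\lambda(b),\lambda(a))$, and compute $\itriple(\lambda(a),\lambda(b),\lambda(a))=n-\dim(\lambda(a)\cap\lambda(b))$ from \eqref{eq:triple}; agreement with $-\itriple(\lambda(b),\lambda(a),\mu)$ is then precisely the symmetry relation $\itriple(\alpha,\beta,\gamma)+\itriple(\beta,\alpha,\gamma)=n-\dim(\alpha\cap\beta)$, which I would, however, rather sidestep by using the first route above.
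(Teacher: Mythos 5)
Your proposal is correct. Note that the paper itself gives no argument for this lemma at all — it simply defers to \cite[Corollary 3.16]{ZWZ18} — so your derivation is a genuine (and welcome) filling-in rather than a reproduction. The route you take is the natural one given the tools already quoted in the paper: the second identity of Proposition \ref{thm:mainli}, $s(\lambda_1,\lambda_2;\mu_1,\mu_2)=\itriple(\lambda_1,\mu_1,\mu_2)-\itriple(\lambda_2,\mu_1,\mu_2)$, specialized to $\mu_1=\lambda(a)$ (resp.\ $\mu_1=\lambda(b)$), combined with the vanishing $\itriple(L,L,\mu)=0$ and the non-negativity $\itriple\geq 0$ from Equation \eqref{eq:triple-coindex-extended}. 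Your verification of the vanishing is the one point requiring care and you handle it correctly: with $\alpha=\beta=L$ the isotropic subspace is $\epsilon=L\cap L+L\cap\mu=L$, which is Lagrangian, so the reduced space $V_\epsilon=L^\omega/L$ is trivial and the coindex term vanishes, while the residual term $\dim(L\cap\mu)-\dim(L\cap L\cap\mu)$ is identically zero. The only cosmetic remark is that the $\mathscr C^1$ hypothesis on $\lambda$ plays no role in your argument (nor should it: the statement is purely about the four Lagrangian subspaces $\lambda(a),\lambda(b),\mu$, and the H\"ormander index depends only on endpoints); your parenthetical that the curve merely serves to realize the H\"ormander index via Definition \ref{def:hormander} is exactly the right way to dispose of it.
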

\begin{proof}
	For the proof, we refer the interested reader to \cite[Corollary 3.16]{ZWZ18}.
\end{proof}
The next result, which is the main result of \cite{ZWZ18}, allows to reduce the computation of the H\"ormander index to the computation of the triple index. 
\begin{prop}{\bf \cite[Theorem 1.1]{ZWZ18}\/}\label{thm:mainli} 
Let $(V,\omega)$ be a $2n$-dimensional symplectic space and let  $\lambda_1, \lambda_2, \mu_1, \mu_2 \in \Lagr(V,\omega)$. Under the above notation, we get 
\begin{equation}
s(\lambda_1, \lambda_2,\mu_1,\mu_2)=\itriple(\lambda_1,\lambda_2,\mu_2)- 	\itriple(\lambda_1,\lambda_2,\mu_1)\\
=\itriple(\lambda_1,\mu_1,\mu_2)- \itriple(\lambda_2,\mu_1,\mu_2)
\end{equation}
\end{prop}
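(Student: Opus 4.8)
The plan is to derive the first equality and then obtain the second one by a purely formal manipulation. Applying the first equality with the pairs $(\lambda_1,\lambda_2)$ and $(\mu_1,\mu_2)$ interchanged gives $s(\mu_1,\mu_2;\lambda_1,\lambda_2)=\itriple(\mu_1,\mu_2,\lambda_2)-\itriple(\mu_1,\mu_2,\lambda_1)$; feeding this into the anti-symmetry relation $s(\lambda_1,\lambda_2;\mu_1,\mu_2)=-s(\mu_1,\mu_2;\lambda_1,\lambda_2)+\sum_{j,k\in\{1,2\}}(-1)^{j+k+1}\dim(\lambda_j\cap\mu_k)$ recorded above, together with the identity $\itriple(\alpha,\beta,\gamma)-\itriple(\beta,\gamma,\alpha)=\dim(\alpha\cap\gamma)-\dim(\alpha\cap\beta)$ — which is immediate from Definition \ref{def:kashi} and the cyclic invariance \eqref{eq:invariance_Q} of $Q$ — reorganizes the right-hand side into $\itriple(\lambda_1,\mu_1,\mu_2)-\itriple(\lambda_2,\mu_1,\mu_2)$ after a routine cancellation of the dimension terms. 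So it suffices to prove the first equality.

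For the first equality the plan is to reduce everything to the following assertion: for any $\lambda_1,\lambda_2\in\Lagr(V,\omega)$ there is a path $t\mapsto\lambda(t)$, $t\in[a,b]$, from $\lambda_1$ to $\lambda_2$ with $\iCLM\big(\mu,\lambda(t);t\in[a,b]\big)=\itriple(\lambda_1,\lambda_2,\mu)$ for \emph{every} $\mu\in\Lagr(V,\omega)$. Granting this, Definition \ref{def:hormander} at once gives $s(\lambda_1,\lambda_2;\mu_1,\mu_2)=\iCLM(\mu_2,\lambda(t))-\iCLM(\mu_1,\lambda(t))=\itriple(\lambda_1,\lambda_2,\mu_2)-\itriple(\lambda_1,\lambda_2,\mu_1)$. (That $s$ does not depend on the chosen path — which is implicit in Definition \ref{def:hormander} — is the classical vertex-independence of the Maslov number of a Lagrangian loop.)

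To construct such a path I would set $W\=\lambda_1\cap\lambda_2$ and use the symplectic splitting $V=(W\oplus JW)\oplus V_W$ of \eqref{eq:ortogonalisimplettici}: then $\lambda_i=W\oplus\widehat\lambda_i$ with $\widehat\lambda_i\=\lambda_i\cap W^\perp\in\Lagr(V_W)$ and $\widehat\lambda_1\cap\widehat\lambda_2=\{0\}$. Choosing a compatible complex structure $\widehat J$ on $V_W$ with $\widehat J\widehat\lambda_1=\widehat\lambda_2$, I take $\rho(t)\=e^{\frac{\pi}{2}\frac{t-a}{b-a}\widehat J}\widehat\lambda_1$ — a plus curve from $\widehat\lambda_1$ to $\widehat\lambda_2$, since $e^{(\pi/2)\widehat J}=\widehat J$ — and $\lambda(t)\=W\oplus\rho(t)$. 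The verification then has two halves. On the triple-index side, a direct computation from Definition \ref{def:kashi}, using $W\subseteq\lambda_1\cap\lambda_2\subseteq\epsilon$ and the cancellation of the dimension terms, gives $\itriple(\lambda_1,\lambda_2,\mu)=\itriple(\widehat\lambda_1,\widehat\lambda_2,\widehat\mu)$ with $\widehat\mu\=\pi_W\mu$, which by \eqref{eq:triple-coindex-extended} equals the extended coindex $\noo{+}Q(\pi\widehat\lambda_1,\pi\widehat\lambda_2;\pi\widehat\mu)$. On the $\iCLM$ side, since $W\subseteq\lambda(t)$ for all $t$ the constant subspace $\mu\cap W$ lies in the kernel of every crossing form of $\lambda$ against $\mu$; hence each crossing form factors through the intrinsic crossing form of $\rho$ against $\widehat\mu$ in $V_W$ without altering its inertia indices, so $\iCLM(\mu,\lambda(t);[a,b])=\iCLM(\widehat\mu,\rho(t);[a,b])$. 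Finally, because $\rho$ is a plus curve, Lemma \ref{thm:maslov-positive} (equivalently \eqref{eq:iclm-crossings-2} combined with \eqref{eq:plus-curve-control}) evaluates $\iCLM(\widehat\mu,\rho(t);[a,b])$ as $\dim(\widehat\lambda_1\cap\widehat\mu)$ plus the sum over interior crossings of $\dim(\rho(t)\cap\widehat\mu)$; bringing $\widehat\mu$ into normal form with respect to $V_W=\widehat\lambda_1\oplus\widehat\lambda_2$ shows that this equals $\coindex Q+\nullity Q=\noo{+}Q(\pi\widehat\lambda_1,\pi\widehat\lambda_2;\pi\widehat\mu)$. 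Combining the two halves proves the assertion.

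The main obstacle will be making the $\iCLM$-reduction step precise: one must check that, after quotienting the constant intersection $\mu\cap W$, the crossing instants of $\lambda$ against $\mu$ coincide with those of $\rho$ against $\widehat\mu$, that the factored crossing forms agree with the intrinsic ones on $V_W$, and — the delicate point — that the contributions at the two endpoints $t=a,b$, where $\lambda(t)\cap\mu$ can vary degenerately, are accounted for exactly by the nullity term in the extended coindex, so that no correction constant survives. Everything else is bookkeeping with the identities collected in Appendix \ref{sec:Maslov}; this is, in substance, the route of \cite[Theorem 1.1]{ZWZ18}.
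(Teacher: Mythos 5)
The paper contains no proof of Proposition \ref{thm:mainli}: the statement is quoted verbatim from \cite[Theorem 1.1]{ZWZ18}, so there is nothing internal to measure your argument against, and what you have written is, as you yourself note, in substance the route of that reference. The skeleton is sound. Your reduction of the second equality to the first is correct: combining the recorded anti-symmetry property of $s$ with the identity $\itriple(\alpha,\beta,\gamma)-\itriple(\beta,\gamma,\alpha)=\dim(\alpha\cap\gamma)-\dim(\alpha\cap\beta)$, which does follow from Definition \ref{def:kashi} and \eqref{eq:invariance_Q}, makes all eight dimension terms cancel. The path $\lambda(t)=W\oplus\rho(t)$ with $W=\lambda_1\cap\lambda_2$ is the right construction, and the triple-index half of your verification really is bookkeeping: since $W\subseteq\lambda_1$ one has $\lambda_1\cap(\mu+W)=(\lambda_1\cap\mu)+W$, hence $\dim(\widehat\lambda_1\cap\widehat\mu)=\dim(\lambda_1\cap\mu)-\dim(W\cap\mu)$, and the correction terms in \eqref{eq:triple} match after reduction; your endpoint/normal-form evaluation of the plus curve against $\widehat\mu$ also checks out in coordinates adapted to $V_W=\widehat\lambda_1\oplus\widehat\lambda_2$.

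The one step that is asserted rather than proved, and that is not covered by any property recorded in Appendix \ref{sec:Maslov}, is the reduction identity $\iCLM\big(\mu,\lambda(t);t\in[a,b]\big)=\iCLM\big(\widehat\mu,\rho(t);t\in[a,b]\big)$. This cannot be extracted from the crossing-form formula \eqref{eq:iclm-crossings-2} alone: whenever $\mu\cap W\neq\{0\}$, \emph{every} $t\in[a,b]$ is a crossing instant of $\lambda$ with $\mu$ and every crossing form is degenerate (it vanishes on the constant subspace $\mu\cap W$), so the unreduced pair is not a regular curve and Lemma \ref{thm:maslov-positive} does not apply to it. Closing this requires an actual symplectic-reduction property of the intersection index — for instance, proving that the quotient map induces an isomorphism $\big(\lambda(t)\cap\mu\big)/\big(W\cap\mu\big)\cong\rho(t)\cap\widehat\mu$ (so that $h_{12}$ in \eqref{eq:clm-rs} changes by the constant $\dim(W\cap\mu)$) and that the induced form on the quotient is the crossing form of $\rho$ against $\widehat\mu$, followed by a stratum-counting or homotopy argument to upgrade this from regular to arbitrary crossings. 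This is a standard but genuinely nontrivial fact about the $\iCLM$-index; as written, your proposal delegates precisely the step that carries the analytic content of the reduction, so the proof is incomplete at that point even though the strategy is correct.
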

\begin{rem}
We emphasize that no transversality conditions are assumed on the four Lagrangian subspaces in Proposition \ref{thm:mainli} 
\end{rem}


\section{On the Spectral Flow }\label{sec:spectral-flow}
%
Let $\mathcal W, \mathcal H$ be  real separable Hilbert spaces with a dense 
and  continuous inclusion $\mathcal W \hookrightarrow \mathcal H$. In what follows we use the following notation.  $\mathcal{B}(\mathcal W,\mathcal H)$ denotes the Banach  space of all linear bounded 
operators; $\mathcal{B}^{sa}(\mathcal W, \mathcal H)$ denotes the set of all  linear bounded selfadjoint operators when regarded as operators  on  $\mathcal H$. $\mathcal{BF}^{sa}(\mathcal W, \mathcal H)$  denotes the set of all linear and  bounded selfadjoint Fredholm operators.
Let now $T \in \mathcal{BF}^{sa}(\mathcal W,\mathcal H)$, then either $0$ is  not  in $\sigma(T)$ or it is in $\sigma_{disc}(T)$ and, as a consequence of the Spectral Decomposition Theorem (cf. \cite[Theorem  6.17, Chapter  III]{Kat80}), the following orthogonal decomposition holds $ \mathcal W = E_-(T) \oplus \ker T \oplus E_+(T),$
with the property
\[
 \sigma(T) \cap(-\infty, 0)= \sigma\left(T_{E_-(T)}\right) \textrm{ and } 
 \sigma(T) \cap(0,+\infty)= \sigma\left(T_{E_+(T)}\right).
\]
\noindent
\begin{defn}\label{def:Morseindex}
Let $T \in \mathcal{BF}^{sa}(\mathcal W,\mathcal H)$. If $\dim E_-(T)<\infty$ 
(resp.  $\dim 
E_+(T)<\infty$), 
we define its {\em Morse index\/} (resp. {\em Morse co-index\/})
as the integer denoted by $\iMor(T)$  (resp. $\coiMor(T)$) and defined as $ \iMor(T) \= \dim E_-(T)\qquad \big(\textrm{resp. } \coiMor(T)\= \dim E_+(T)\big).$
\end{defn}
We are now in position to introduce the spectral flow. 
Given a  $\mathscr C^1$-path  $L:[a,b]\to\mathcal{BF}^{sa}(\mathcal W, \mathcal 
H)$, the spectral flow of $L$ counts the net number of eigenvalues crossing 0. 
\begin{defn}\label{def:crossing}
An instant $t_0 \in (a,b)$ is called a \emph{crossing instant} (or {\em 
crossing\/} for short) if $\ker  L_{t_0} \neq \{0\}$. The \emph{crossing form} 
at a crossing $t_0$ is the quadratic form defined by 
\[
 \Gamma( L, t_0): \ker  L_{t_0} \to \R, \ \ \Gamma( L, 
t_0)[u] \=\langle 
 \dot{ L}_{t_0} u, u \rangle_{\mathcal H},
\]
where we denoted by $\dot{L}_{t_0}$ the derivative of $L$ 
with respect to the parameter $t \in [a,b]$ at the point $t_0$.
A crossing is called \emph{regular}, if $\Gamma( L, t_0)$ is 
non-degenerate. If $t_0$ is a crossing instant for $L$, we refer to 
$m(t_0)$ the dimension of $\ker  L_{t_0}$.
\end{defn}

\begin{rem}
It is worth noticing that regular crossings are isolated, and hence, on a compact interval are in a finite number. 
\end{rem}
In the case of regular curve (namely a curve having only regular crossings)  we introduce the following Definition. 
\begin{defn}\label{def:new-spectralflow-def}
 Let  $L:[a,b]\to\mathcal{BF}^{sa}(\mathcal  H)$ be a $\mathscr 
C^1$-path and 
 we assume that it has only regular crossings. Then 
 \begin{equation}\label{eq:spectral-flow-crossings}
\spfl(L; [a,b])= \sum_{t \in (a,b)} \sgn \Gamma(L, t)- 
\iMor\big(\Gamma(L,a)\big)
+ \coiMor\big(\Gamma(L,b)\big),
\end{equation}
where the sum runs over all regular (and hence in a finite number) strictly contained in  $[a,b]$.
\end{defn}
We recall the following well-known result.
\begin{lem}\label{thm:perturbazione}
There exists $\varepsilon > 0$ such that
\begin{itemize}
\item $A + \delta \Id$ is a path in $\mathcal{BF}^{sa}(\mathcal W, \mathcal  H)$ for all $|\delta| \leq \varepsilon$;
\item $A + \delta \Id$ has only regular crossings for almost every $\delta \in (-\varepsilon, \varepsilon)$.
\end{itemize}
\end{lem}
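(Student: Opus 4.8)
The plan is to prove the two assertions separately: the first by openness together with compactness of the path, the second by reducing the question to a finite‑dimensional one via Kato's perturbation theory and then invoking Sard's theorem in its elementary one–variable form (valid already for Lipschitz functions). For the \emph{first assertion}: since the Fredholm operators form an open subset of $\mathcal{B}(\mathcal W,\mathcal H)$, the set $\mathcal{BF}^{sa}(\mathcal W,\mathcal H)$ is open in $\mathcal{B}^{sa}(\mathcal W,\mathcal H)$; as $t\mapsto A_t$ is continuous its image $A([a,b])$ is compact and hence lies at positive distance $\rho>0$ from the complement of $\mathcal{BF}^{sa}$. Writing $\iota\colon\mathcal W\hookrightarrow\mathcal H$ for the bounded inclusion, the operator $\delta\,\Id$ has $\mathcal{B}(\mathcal W,\mathcal H)$–norm $|\delta|\,\|\iota\|$, so for $|\delta|\leq\varepsilon:=\rho/\|\iota\|$ the path $t\mapsto A_t+\delta\,\Id$ stays in $\mathcal{BF}^{sa}(\mathcal W,\mathcal H)$, and it is of class $\mathscr C^1$ in $t$ because $A$ is and $\delta\,\Id$ does not depend on $t$.

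For the \emph{second assertion}, put $A^\delta_t:=A_t+\delta\,\Id$ and consider $\Sigma:=\{(t,\delta)\in[a,b]\times[-\varepsilon,\varepsilon]:\ker A^\delta_t\neq\{0\}\}$, which is closed by upper semicontinuity of $\dim\ker$ for Fredholm operators. Around a point $(t_0,\delta_0)\in\Sigma$, with $m:=\dim\ker A^{\delta_0}_{t_0}$, I would use the Riesz projection $P(t,\delta)$ onto the part of $\sigma(A^\delta_t)$ enclosed by a fixed small circle around $0$: it is $\mathscr C^1$ in $(t,\delta)$ of constant rank $m$, and fixing a $\mathscr C^1$ orthonormal frame $U(t,\delta)$ of $\im P(t,\delta)$ one obtains a $\mathscr C^1$ family of symmetric $m\times m$ matrices
\[
M(t,\delta):=U(t,\delta)^{*}A^\delta_t\,U(t,\delta),\qquad M(t_0,\delta_0)=0,
\]
with $U(t,\delta)\ker M(t,\delta)=\ker A^\delta_t$, the crossing form of $s\mapsto A^\delta_s$ at a crossing corresponding to $\partial_tM$ restricted to $\ker M$, and — this is where perturbing by exactly $\delta\,\Id$ is used — $\partial_\delta M(t,\delta)$ restricted to $\ker M(t,\delta)$ equal to the identity, in particular positive definite. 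After shrinking the chart, continuity then forces the eigenvalues of $M$ close to $0$ to be strictly increasing in $\delta$, with difference quotients bounded below by some $c>0$. Covering the compact set $\Sigma$ by finitely many such charts, it remains to show that in each chart the set of $\delta$'s giving a degenerate crossing is Lebesgue–null.

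For the \emph{Sard step in a chart}: let $\lambda_1(t,\delta)\leq\cdots\leq\lambda_m(t,\delta)$ be the ordered eigenvalues of $M(t,\delta)$, which are locally Lipschitz; near its zero set each branch $\lambda_j$ is strictly increasing in $\delta$ with slope $\geq c$, so $\{\lambda_j=0\}$ is the graph of a Lipschitz function $\delta=\delta_j(t)$, and the crossing instants of $s\mapsto A^\delta_s$ are precisely the $t$ with $\delta=\delta_j(t)$ for some $j$. By first–order perturbation theory the eigenvalues of the crossing form at such an instant are the (suitably ordered) one–sided $t$–derivatives of the branches through $0$; differentiating $\lambda_j(t,\delta_j(t))\equiv0$ gives $\partial_t\lambda_j=-(\partial_\delta\lambda_j)\,\delta_j'$, and since $\partial_\delta\lambda_j\geq c>0$ the crossing at $t$ is degenerate if and only if $\delta_j'(t)=0$, or does not exist, for some contributing index $j$. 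Hence the bad parameters lie in $\bigcup_j\delta_j\bigl(\{\delta_j'=0\}\cup\{\delta_j'\ \text{does not exist}\}\bigr)$: the non‑differentiability set is null and its Lipschitz image is null, while by the one–dimensional area formula (Sard's theorem for Lipschitz functions of one variable) the image under $\delta_j$ of the critical set $\{\delta_j'=0\}$ is null as well. Taking the union over the finitely many charts concludes the argument.

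The step I expect to be the \emph{main obstacle} is the analysis at crossings of multiplicity $m\geq2$: one must keep track of one–sided directional derivatives of ordered eigenvalues and make precise the correspondence ``crossing form $\leftrightarrow$ derivatives of the eigenvalue branches through $0$''. It is exactly the positivity of $\partial_\delta M$ on $\ker M$ — a consequence of perturbing by $\delta\,\Id$ — that makes the zero levels into Lipschitz graphs transverse to the $\delta$–direction and guarantees one–sided differentiability of the relevant branches at their zeros, so that the reduction to Sard's theorem goes through. This is classical eigenvalue–perturbation material (cf.\ \cite{Kat80}) and can alternatively be invoked from the standard references on the spectral flow.
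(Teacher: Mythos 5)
Your argument is correct; note that the paper itself gives no proof of this lemma (it is merely \emph{recalled} as a well-known fact with a pointer to the spectral-flow literature), so the only meaningful comparison is with the standard references (Robbin--Salamon, Waterstraat, Fitzpatrick--Pejsachowicz--Recht), and your route is essentially theirs: openness of $\mathcal{BF}^{sa}(\mathcal W,\mathcal H)$ plus compactness of the path for the first item, and for the second the local finite-dimensional spectral reduction $M(t,\delta)$ with $\partial_\delta M(t_0,\delta_0)=\Id$ on $\ker M(t_0,\delta_0)=\R^m$, followed by a one-dimensional Sard/area-formula argument for Lipschitz functions. The step you flag as the main obstacle does close. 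Since $\partial_\delta M\geq c\,\Id$ on the whole chart (by continuity from $\partial_\delta M(t_0,\delta_0)=\Id_m$), Weyl monotonicity gives $c(\delta_2-\delta_1)\leq\lambda_j(t,\delta_2)-\lambda_j(t,\delta_1)\leq C(\delta_2-\delta_1)$ for all $j$ and $\delta_1\leq\delta_2$; combined with $\lambda_j\bigl(t,\delta_j(t)\bigr)\equiv 0$ this yields the two-sided bound $c\,\bigl|\delta_j(t)-\delta_*\bigr|\leq\bigl|\lambda_j(t,\delta_*)\bigr|\leq C\,\bigl|\delta_j(t)-\delta_*\bigr|$, so a vanishing one-sided $t$-derivative of a branch $\lambda_j(\cdot,\delta_*)$ through $0$ (equivalently, by first-order perturbation of the ordered eigenvalues of the $\mathscr C^1$ family $M$, a kernel vector of the crossing form $\partial_tM|_{\ker M}=U^*\dot A\,U|_{\ker M}$) forces the corresponding one-sided derivative of $\delta_j$ at $t$ to vanish. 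That is exactly the inclusion of the bad parameter set into $\bigcup_j\delta_j\bigl(\{\delta_j'=0\}\cup\{\delta_j'\ \text{does not exist}\}\bigr)$ that your Lipschitz--Sard step needs, and only this implication (degenerate crossing $\Rightarrow$ bad $t$) is required, not the converse.
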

\begin{defn}\label{def:positive-paths}
	The $\mathscr C^1$-path $L:[a,b]\ni t \mapsto L_t\in \mathcal{BF}^{sa}(\mathcal  H)$ is termed {\em positive\/} or {\em plus\/} path, if at each crossing instant $t_*$ the crossing form $\Gamma(L, t_*)$ is positive definite.  
\end{defn}
\begin{rem}
We observe that in the case of a positive path, each crossing is regular and in particular the total number of crossing instants on a compact interval is finite. Moreover the local contribution at each crossing to the spectral flow is given by the dimension of the intersection. Thus given a positive path $L$, the spectral flow is given by 
 \[
\spfl(L; [a,b])= \sum_{t \in (a,b)} \dim  \ker L(t)+ \dim  \ker L(b). 
\]
\end{rem}
\begin{defn}\label{def:admissible-paths-operators}
The path $L:[a,b]\to\mathcal{BF}^{sa}(\mathcal  H)$ is termed {\em admissible\/} provided it has invertible endpoints. 
\end{defn}
For paths of bounded self-adjoint Fredholm operators parametrized on $[a,b]$ which are compact perturbation of a fixed operator,  the spectral flow given in Definition \ref{def:new-spectralflow-def}, can be characterized as the relative Morse index of its endpoints. More precisely, the following result holds. 
\begin{prop}\label{thm:spfl-operatori-diff-rel-morse}
Let us consider the  path  $L: [a,b] \to\mathcal{BF}^{sa}(\mathcal  H)$  and we assume that for every $t \in [a,b]$, the operator  $L_t- L_a$ is compact.   Then
\begin{equation}\label{eq:equality-spfl-relmorse}
-\spfl(L; [a,b])=\irel(L_a, L_b).
\end{equation}
Moreover if $L_a$ is essentially positive, then we have 
\begin{equation}\label{eq:diff-Morse}
-\spfl(L; [a,b])
=\iMor (L_b)-\iMor(L_a)
\end{equation}
and if furthermore  $L_b$ is positive definite, then 
\begin{equation}
\spfl(L; [a,b])=\iMor(L_a).
\end{equation}
\end{prop}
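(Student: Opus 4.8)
The plan is to establish \eqref{eq:equality-spfl-relmorse} first, and then to deduce \eqref{eq:diff-Morse} and the last identity by specialising the definition of the relative Morse index $\irel$. For \eqref{eq:equality-spfl-relmorse}, the starting observation is that the hypothesis $L_t-L_a\in\mathcal K^{sa}(\mathcal H)$ for every $t$ forces the whole path to lie in the affine set $\mathcal A\=L_a+\mathcal K^{sa}(\mathcal H)$, every element of which is again bounded self-adjoint Fredholm (a compact perturbation of a bounded self-adjoint Fredholm operator is Fredholm). Since $\mathcal A$ is convex, the affine homotopy $H(s,t)\=(1-s)L_t+s\big(L_a+\tfrac{t-a}{b-a}(L_b-L_a)\big)$ stays in $\mathcal A\subset\mathcal{BF}^{sa}(\mathcal H)$, keeps the endpoints $H(s,a)=L_a$ and $H(s,b)=L_b$ fixed, and joins $L$ to the straight-line path $\Lambda_t\=L_a+\tfrac{t-a}{b-a}(L_b-L_a)$. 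By the homotopy invariance of the spectral flow under fixed-endpoint homotopies inside $\mathcal{BF}^{sa}(\mathcal H)$ one gets $\spfl(L;[a,b])=\spfl(\Lambda;[a,b])$, so this integer depends only on $L_a$ and $L_b$.

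Next I would identify $-\spfl(\Lambda;[a,b])$ with $\irel(L_a,L_b)$ through the standard finite-dimensional reduction of the spectral flow. By compactness of $[a,b]$ and continuity of the spectrum one may choose $a=t_0<t_1<\cdots<t_N=b$ and numbers $\delta_i>0$ such that $\pm\delta_i\notin\sigma(\Lambda_t)$ for all $t\in[t_{i-1},t_i]$; on each such block the spectral subspace $\mathrm{ran}\,\chi_{(-\delta_i,\delta_i)}(\Lambda_t)$ has constant finite dimension and varies continuously, and by path additivity together with the reduction property the contribution of that block to $\spfl(\Lambda;[a,b])$ coincides with the spectral flow of the restricted finite-dimensional path (computed, after a harmless perturbation making all crossings regular as in Lemma~\ref{thm:perturbazione}, by Definition~\ref{def:new-spectralflow-def}). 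Summing over the blocks and telescoping the boundary Morse indices one recovers exactly the relative index of the Fredholm pair formed by the negative spectral projections of $L_a$ and of $L_b$, which is the definition of $\irel(L_a,L_b)$; this proves \eqref{eq:equality-spfl-relmorse}. For \eqref{eq:diff-Morse} one then observes that if $L_a$ is essentially positive, i.e. $\sigma_{\mathrm{ess}}(L_a)\subset(0,+\infty)$, then $\sigma_{\mathrm{ess}}(L_b)=\sigma_{\mathrm{ess}}(L_a)$ (compact perturbation), so $L_b$ is essentially positive as well and $\iMor(L_a),\iMor(L_b)<\infty$; the negative spectral projections are then of finite rank, the relative index of the pair collapses to the genuine difference of ranks, $\irel(L_a,L_b)=\iMor(L_b)-\iMor(L_a)$, and \eqref{eq:diff-Morse} follows from \eqref{eq:equality-spfl-relmorse}. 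If in addition $L_b$ is positive definite then $\iMor(L_b)=0$ and \eqref{eq:diff-Morse} reduces to $\spfl(L;[a,b])=\iMor(L_a)$.

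The one step that requires genuine care is the finite-dimensional reduction used to match $-\spfl(L;[a,b])$ with $\irel(L_a,L_b)$: one must fix once and for all the convention by which $\ker L_a$ and $\ker L_b$ contribute, check that the local Morse-index jumps telescope correctly across the partition points $t_i$ where the auxiliary cut-offs $\delta_i$ change, and — when $\iMor(L_a)=\iMor(L_b)=+\infty$ — interpret the resulting integer not as a meaningless difference of Morse indices but as the relative index of a Fredholm pair of orthogonal projections. Once the convexity and homotopy-invariance argument of the first paragraph has reduced everything to the affine path, all that remains is this bookkeeping, which is routine functional analysis; the special cases are then immediate.
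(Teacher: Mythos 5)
Your argument is sound and reaches all three identities, but it follows a genuinely different route from the paper for the key equality $-\spfl(L;[a,b])=\irel(L_a,L_b)$. The paper does not homotope to the straight-line segment at all: it introduces the two-parameter family $L(s,t)=L_t+s\,\varepsilon\,\Id$, uses the homotopy property to show $\spfl(L_t)=\spfl(L_t+\varepsilon\Id)$ (the two lateral paths $s\mapsto L_a+s\varepsilon\Id$ and $s\mapsto L_b+s\varepsilon\Id$ being positive paths with vanishing spectral flow for small $\varepsilon$), observes that for $\varepsilon$ small the shifted path is admissible in the sense of Definition~\ref{def:admissible-paths-operators}, and then simply cites \cite[Proposition 3.3]{FPR99}; the essentially positive case is likewise outsourced to \cite[Proposition 3.9]{FPR99} after noting that the whole path stays in the component $\mathcal{BF}^{sa}_+(\mathcal H)$. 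Your convexity argument reducing to the affine segment $\Lambda_t$ is correct and arguably cleaner as a first step, and your derivation of \eqref{eq:diff-Morse} from invariance of the essential spectrum under compact perturbation is a perfectly good direct substitute for the second citation. The caveat is that your second paragraph --- the finite-dimensional reduction, the telescoping of the local Morse-index jumps across the partition points, and the matching of endpoint conventions with the definition of $\irel$ as a relative index of the Fredholm pair of negative spectral projections --- is precisely the content of the result the paper cites from \cite{FPR99}, and you only sketch it while explicitly flagging the bookkeeping as the delicate point. So what you buy is self-containedness at the cost of leaving the one genuinely nontrivial computation asserted rather than verified; what the paper buys with its $\varepsilon\Id$-shift is that the cited result applies verbatim to an admissible path, sidestepping the endpoint-convention issue you raise. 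Either way the specialisations \eqref{eq:diff-Morse} and $\spfl(L;[a,b])=\iMor(L_a)$ follow as you say.
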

\begin{proof}
The proof of the equality in  Equation~\eqref{eq:equality-spfl-relmorse} is an immediate consequence of the fixed end homotopy properties of the spectral flow. For, let $\varepsilon >0 $ and let us consider the two-parameter family 
$L:[0,1]\times [a,b] \to \mathcal{BF}^{sa}(\mathcal  H) \textrm{ defined by } L(s,t)\= L_t+ s \,\varepsilon\, \Id. $
By the homotopy property of the spectral flow, we get that 
\begin{multline}\label{eq:fff}
\spfl(L_t; t \in [a,b])\\	=  \spfl(L_a+s \varepsilon \Id, s \in [0,1]) + \spfl(L_t+\varepsilon\Id, t \in [a,b])- \spfl(L_b + s \varepsilon\Id, s \in [0,1])\\=  \spfl(L_t+\varepsilon\Id, t \in [a,b])
\end{multline}
where the last equality in  Equation~\eqref{eq:fff} is consequence if the positivity of all the involved paths.  By choosing a maybe smaller $\varepsilon>0$ the path  $t\mapsto L_t+\varepsilon\Id$ is admissible (in the sense of Definition  \ref{def:admissible-paths-operators}). The conclusion, now readily follows by applying  \cite[Proposition 3.3]{FPR99} (the minus sign appearing is due to a different choosing convention for the spectral flow. 

In order to prove the second claim, it is enough to observe that if $L_a$ is essentially positive, then $L$ is a  path entirely contained in the (path-connected component) $\mathcal{BF}_+^{sa}(\mathcal  H)$. The proof of the equality in Equation~\eqref{eq:diff-Morse} is now a direct  consequence of Equation the previous argument and \cite[Proposition 3.9]{FPR99}. The last can be deduced by Equation~\eqref{eq:diff-Morse} once observed that $\iMor(L_b)=0$. This concludes the proof. 
\end{proof}
\begin{rem}
	We observe that  a direct proof of Equation~\eqref{eq:diff-Morse} can be easily conceived as direct consequence of the homotopy properties of $\mathcal{BF}_+^{sa}(\mathcal  H)$.
\end{rem}

\begin{rem}
 We observe that the definition of spectral flow for bounded selfadjoint Fredholm operators given in Definition \ref{def:new-spectralflow-def} is slightly different from the standard definition given in literature in which only continuity is required on the regularity of the path.  (For further details, we refer the interested reader to 
\cite{RS95,Wat15} and 
 references therein). Actually Definition \ref{def:new-spectralflow-def}  represents an efficient way for 
 computing the spectral flow even if it requires more regularity as well as a  transversality assumption 
 (the regularity of each crossing instant). However, it is worth to mentioning that, the spectral flow is a fixed endpoints homotopy invariant and for admissible paths (meaning for paths having invertible endpoints) is a free homotopy invariant. By density arguments, we observe that a $\mathscr C^1$-path 
 always exists  in any fixed endpoints homotopy class of the original path.  
\end{rem}

\begin{rem} 
 It is worth noting, as already observed by author in \cite{Wat15}, that the spectral flow can be 
 defined in the more general case of continuous 
 paths of closed unbounded selfadjoint Fredholm operators that are 
 continuous with respect to the (metric) gap-topology. However in the special case in 
 which the domain of the operators is fixed, then the closed path of unbounded 
 selfadjoint Fredholm operators can be regarded as a continuous path 
 in $\mathcal{BF}^{sa}(\mathcal W, \mathcal  H)$. Moreover  this path is also continuous 
 with respect to the aforementioned gap-metric topology.
 
 The advantage to regard the paths in  $\mathcal{BF}^{sa}(\mathcal W, \mathcal  H)$ is that the 
 theory is straightforward as in the bounded case and, clearly, it is sufficient for the applications  
 studied in the present manuscript. 
\end{rem}



\vspace{2cm}
	\noindent
	\textsc{Prof. Vivina L.~Barutello}\\
	Dipartimento di Matematica \lq\lq G.~Peano\rq\rq\\
	Università degli Studi di Torino\\
	Via Carlo Alberto, 10 \\
	10123 Torino \\
	Italy\\
	E-mail: \email{vivina.barutello@unito.it}   
   
\vspace{1cm}
\noindent
\textsc{Prof. Daniel C. Offin}\\
Department of Mathematics and Statistics\\
Queens University\\
Kingston, Ontario, Canada, KZL 3N6\\
E-mail: \email{offind@mast.queensu.ca}

\vspace{1cm}
\noindent
\textsc{Prof. Alessandro Portaluri}\\
DISAFA\\
Università degli Studi di Torino\\
Largo Paolo Braccini 2 \\
10095 Grugliasco, Torino\\
Italy\\
Website: \url{https://sites.google.com/view/alessandro-portaluri/home}\\
E-mail: \email{alessandro.portaluri@unito.it}

\vspace{1cm}
\noindent
\textsc{Prof. Li Wu}\\
Department of Mathematics\\
Shandong University\\
Jinan,Shandong, 250100\\
The People's Republic of China \\
China\\
E-mail: \email{vvvli@sdu.edu.cn}

\end{document}